\newtheorem{thm}{Theorem}[section]
\newtheorem{cnj}[thm]{Conjecture}
\newtheorem{lem}[thm]{Lemma}
\newtheorem{cor}[thm]{Corollary}
\newtheorem{prop}[thm]{Proposition}
\theoremstyle{definition}
\newtheorem{dfn}[thm]{Definition}
\newtheorem{prb}[thm]{Problem}
\theoremstyle{remark}
\newcommand{\R}{\mathbb{R}}
\newcommand{\N}{\mathbb{N}}
\newcommand{\Q}{\mathbb{Q}}
\newcommand{\baire}{\mathcal{N}}
\newcommand{\bb}[1]{\mathbb{{#1}}}
\newcommand{\inv}{^{-1}}
\newcommand{\fr}{^\smallfrown} %concatenation sign
\newcommand{\ip}[1]{\langle {#1} \rangle} %inner product
\newcommand{\pf}[1]{\langle\langle{#1}\rangle\rangle} %pfister form
\newcommand{\sm}{\smallsetminus}
\newcommand{\bp}[2]{\mathbf{\Pi}_{#2}^{#1}} %boldface hierarchy pi
\newcommand{\bs}[2]{\mathbf{\Sigma}_{#2}^{#1}} %sigma
\newcommand{\dom}{\operatorname{dom}}
\newcommand{\res}{\upharpoonright}
\newcommand{\horn}{\mathrm{HornSAT}}
\newcommand{\nae}{\mathrm{NAE}}
\newcommand{\maj}{\operatorname{maj}}
\newcommand{\sat}{\mathrm{SAT}}
\newcommand{\lra}{\Leftrightarrow}
\newcommand{\s}[1]{\mathcal{{#1}}} %caligraphy
\newcommand{\csp}{\mathrm{CSP}}
\newcommand{\aut}{\operatorname{Aut}}
\newcommand{\pol}{\operatorname{Pol}}
\newcommand{\var}{\operatorname{Var}}
\newcommand{\HS}{\operatorname{HS}}
\newcommand{\HSP}{\operatorname{HSP}}
\title{An algebraic approach to Borel CSPs}
\author{Riley Thornton}
\begin{document}

\maketitle

\begin{abstract}
    We adapt tools from the algebraic approach to constraint satisfaction problems to answer descriptive set theoretic questions about Borel CSPs. We show that if a structure $\s D$ does not have a Taylor polymorphism, then the corresponding Borel CSP is $\bs12$-complete. In particular, by the CSP Dichotomy Theorem \cite{BulatovCSPDichotomy, ZhukCSPDichotomy}, if $\csp(\s D)$ is NP-complete, then the Borel version, $\csp_B(\s D)$, is $\bs12$-complete (assuming P$\not=$NP). We also have partial converses, such as a descriptive analogue of the Hell--Ne\v set\v ril theorem characterizing $\bs12$-complete graph homomorphism problems. We show that the structures where every solvable Borel instance of their CSP has a Borel solution are exactly the width 1 structures. And, we prove a handful of results bounding the projective complexity of certain bounded width structures.
\end{abstract}

\section{Introduction}\label{section:intro}
This paper regards Borel versions of the following general problem: for a fixed finite relational structure $\s D$, when does a structure $\s X$ in the same language admit a homomorphism into $\s D$? We call $\s D$ the template and $\s X$ an instance of $\s D$, and we sometimes refer to a homomorphism as a solution to $\s X$. For example:
\begin{enumerate}
    \item If $\s D=K_n$, then an instance of $\s D$ is a directed graph and a solution is a (vertex) $n$-coloring.
    \item If $\s D$ has domain $\{0,1\}$ and relations
    $\left\{(x_1,...,x_k): \bigvee_{i<j} \neg x_i\vee \bigvee_{i\geq j} x_i\right\}$ then an instance of $\s D$ is an instance of $k\sat$ and a solution is a satisfying assignment.
    \item If $\s D$ is a finite field $\bb F$ equipped with one relation for each affine subspace, then an instance of $\s D$ is a system of linear equations, and a homomorphism in $\s D$ is a solution to the system.
\end{enumerate} Following example (2) above, computer scientists refer to these as constraint satisfaction problems (CSPs). We adopt this convention: 

\begin{dfn}\label{dfn:CSP}
For $\s D$ a finite relational structure, $\csp(\s D)$ is the set of finite structures which admit a homomorphism into $\s D$, and $\csp_B(\s D)$ is the set of codes for Borel structures which admit Borel homomorphisms into $\s D$. 
\end{dfn}

To be precise, we should specify presentations and codings for our structures. For finite structures, the details will not be important for this paper, but see \cite{FederVardi}. For Borel structures, any standard coding should work. Appendix \ref{appendix:codings} gives some details of a convenient coding.

Some of the typical first questions we ask about a Borel combinatorial problem are: Is a classical solution enough to guarantee a Borel solution? Is there some kind of dichotomy theorem for this problem? Is a Borel solution equivalent to a (lightface, effective) $\Delta^1_1$ solution for $\Delta^1_1$ instances? As an example consider the problem of countably coloring graphs. The $G_0$ graph of Kechris, Solecki, and Todorcevic is an is an instance of the countable coloring problem with a solution but no Borel solution. The $G_0$ dichotomy says that $G_0$ admits a homomorphism into any Borel graph with no Borel countable coloring. And, the proof of the $G_0$ dichotomy implies that any $\Delta^1_1$ graph with a Borel countable coloring has a $\Delta^1_1$ countable coloring \cite{KST}. Note that a positive answer to any of our first pass questions for a problem implies an upper bound on the projective complexity of the set of codes of solvable instances. So, a $\bs12$-completeness result rules out all of these niceties. See, for instance, \cite{effectivization} and \cite{TV}.

In computer science, similar first pass questions about a problem include: Is this problem in P? Can it be solved by linear relaxation or by constraint propagation? Is there a finite list of minimal instances with no solution? Remarkably, these problems have all been solved for CSPs. 

Indeed, the class of CSPs was isolated by Feder and Vardi in the 1990s as a rich class of problems where complexity questions could be settled. Despite Ladner's theorem, which says that (assuming P$\not=$NP) there must be many intermediate classes between P and the NP-complete\cite{Ladner}, most natural problems seem to fall into one of the two extremes. As a partial explanation of this phenomena Feder and Vardi conjectured that all problems of the form $\csp(\s D)$ must be either in P or NP-complete \cite{FederVardi}. 

%give date
Not too long after Feder and Vardi made their conjecture, Jeavons showed that this conjectured dichotomy must come down to a question about polymorphism algebras \cite{Jeavons}.
\begin{dfn}\label{dfn:polymorphism intro}
A \textbf{polymorphism} of $\s D$ is a homomorphism from the product $\s D^n$ to $\s D$.
\end{dfn} So a polymorphism takes in $n$ solutions to an instance of $\s D$ and returns another solution. Several examples are given after Definition \ref{dfn: polymorphisms}. A classical theorem of universal algebra says that polymorphism algebras ordered by containment are in Galois correspondence with structures under a notion of simulation called pp definability. And, Jeavons showed that pp definitions yield polynomial time reductions. (This has since been generalized greatly, see Theorem \ref{thm:correspondences}).

About a decade later, Bulatov, Jeavons, and Krokhin conjectured an algebraic dividing for polynomial time solvability\footnote{The form of the CSP dichotomy theorem given here is somewhat anachronistic. The original conjecture was in terms of Taylor operations. See the comments after Theorem \ref{thm:intractable equivalents}} \cite{BulatovJeavonsKrokhin}. After decades of work in computer science, combinatorics, and universal algebra, the conjectures of Feder--Vardi and Bulatov--Jeavons--Krokhin were confirmed independently by Bulatov and Zhuk in 2017:

\begin{thm}[CSP Dichotomy Theorem, \cite{BulatovCSPDichotomy}\cite{ZhukCSPDichotomy}] \label{thm:CSP dichotomy theorem}
For a finite relational structure $\s D$, $\csp(\s D)$ is polynomial time solvable if there is a polymorphism $f$ of $\s D$ satisfying
\[(\forall a,e,r)\; f(r,a,r,e)=f(a,r,e,a).\] And $\csp(\s D)$ is NP-complete otherwise.
\end{thm}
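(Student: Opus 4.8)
The plan is to treat the two implications separately, because they are of completely different character: one is comparatively elementary and predates the full dichotomy, the other is essentially the entire content of \cite{BulatovCSPDichotomy} and \cite{ZhukCSPDichotomy}. \textbf{The hardness direction.} Suppose $\s D$ has no polymorphism satisfying $f(r,a,r,e)=f(a,r,e,a)$. First I would reduce to a core: replace $\s D$ by the expansion of its core by one singleton unary relation per element. This preserves $\csp(\s D)$ up to polynomial-time equivalence (a structure maps into $\s D$ iff it maps into the core) and makes every polymorphism idempotent. The displayed equation is the $4$-ary Siggers identity, and by the theory of cyclic terms (Barto--Kozik) together with Siggers' argument and its $4$-ary refinement, a finite idempotent algebra has a $4$-ary Siggers term iff it has a Taylor term; so the hypothesis says that $\mathbb A:=(D;\pol(\s D))$ generates a variety with no Taylor term. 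By the sharp form of the algebraic Galois correspondence underlying Theorem~\ref{thm:correspondences} (Barto--Op\v{r}\v{s}al--Pinsker), such a $\s D$ pp-constructs every finite relational structure, in particular a fixed NP-complete template such as $K_3$. Since pp-constructions yield polynomial-time reductions \cite{Jeavons}, $\csp(\s D)$ is NP-hard; it is trivially in NP (guess a map $V(\s X)\to D$, then verify each constraint tuple), hence NP-complete.

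\textbf{The tractability direction.} Suppose $\s D$ does carry such a polymorphism; again pass to the idempotent core, so $\mathbb A:=(D;\pol(\s D))$ generates a Taylor variety, and one must exhibit a polynomial-time algorithm for $\csp(\s D)$. I would follow the skeleton common to both \cite{BulatovCSPDichotomy} and \cite{ZhukCSPDichotomy}: (i) run constraint propagation to reach $(2,3)$-consistency, rewriting each constraint relation as a subalgebra of a power of $\mathbb A$; (ii) if $\mathbb A$ and all of its subalgebras and quotients have weak near-unanimity terms of every arity --- equivalently, the variety omits tame-congruence type $\mathbf 1$ --- then $\csp(\s D)$ has bounded width and a consistent nonempty instance already has a solution (Barto--Kozik); (iii) otherwise invoke the finer structure theory --- strongly subdirectly irreducible quotients, centers and affine congruences, absorption, and Zhuk's centralizer machinery --- to locate either an affine factor, solved by Gaussian elimination over the associated module, or a proper subuniverse/quotient onto which the problem recurses; (iv) verify that this recursion is well-founded (bounded in terms of $|D|$ and the relational arities) and that each reduction step preserves solvability. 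I would not reproduce steps (iii)--(iv) beyond this outline.

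\textbf{The main obstacle.} Everything except (iii)--(iv) is routine: NP membership, the hardness reduction modulo the Barto--Op\v{r}\v{s}al--Pinsker characterization, the Siggers--Taylor equivalence, the consistency algorithm, and the affine base case. The real difficulty --- and the reason the theorem required two long papers --- is precisely the recursive decomposition of an arbitrary Taylor algebra into bounded-width and affine pieces together with the proof that this decomposition faithfully detects solvability of the instance; for the present paper I would simply cite \cite{BulatovCSPDichotomy, ZhukCSPDichotomy} for this half.
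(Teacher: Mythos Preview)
The paper does not prove this theorem at all: it is stated as a background result with citations to \cite{BulatovCSPDichotomy} and \cite{ZhukCSPDichotomy}, and no proof environment follows the statement. Your proposal therefore goes well beyond what the paper does. Your outline of the hardness direction (reduce to an idempotent core, invoke the Siggers/Taylor equivalence, then pp-construct $K_3$ and apply the polynomial-time reduction) is correct and is essentially the Bulatov--Jeavons--Krokhin argument the paper alludes to in the discussion surrounding Theorems~\ref{thm:correspondences} and~\ref{thm:intractable equivalents}; your decision to simply cite \cite{BulatovCSPDichotomy, ZhukCSPDichotomy} for the tractability direction is exactly what the paper itself does.
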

In this paper, we give partial algebraic answers to some of our basic questions from Borel combinatorics.

\subsection{Results and conjectures}

Here we lay out the main results of the paper and some related open problems. To state these results we start with a number of definitions.

\begin{dfn}\label{dfn: ess class eff}
Say that $\csp_B(\s D)$ is \textbf{essentially classical} if any time a Borel instance of $\s D$ admits a solution it admits a Borel solution.

Say that $\csp_B(\s D)$ is \textbf{effectivizable} if anytime a $\Delta^1_1$ instance admits a Borel solution it admits a $\Delta^1_1$ solution.
\end{dfn}

These properties say that classical and effective tools respectively are sufficient to understand the Borel problem. Effectivizability plays a surprisingly important role in Borel combinatorics. Non-trivial upper bounds on the complexity of Borel problems almost always come with an effectivization result, and in particular dichotomy theorems in Borel combinatorics seem to always admit a proof via Gandy--Harrington forcing (see for instance \cite{effectivization}).

In the finitary setting, similar classes of problems are closed downward under so-called pp constructions (see Definition \ref{dfn: pp defns}). Since pp construction can be characterized by the polymorphism algebras of $\s D$ and $\s E$ (see Theorem \ref{thm:correspondences}), it follows abstractly that there is some algebraic characterization of these finitary classes. The same is nearly true in the Borel setting, but for technical reasons we need to assume equality is part of our structures.

\begin{thm}[See Corollary \ref{cor:equality closure}]\label{thm:equality closure intro}
Suppose $\s D$ is a structure which includes equality as a relation and $\s E$ is pp constructible in $\s D$. If $\csp_B(\s D)$ is $\bp11$, effectivizable, or essentially classical then so too is $\csp_B(\s E)$.
\end{thm}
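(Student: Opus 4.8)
The plan is to unwind pp-constructibility into its two elementary operations and show each one induces a Borel, effective reduction on instances. Recall (Definition \ref{dfn: pp defns}) that $\s E$ being pp-constructible in $\s D$ means $\s E$ is homomorphically equivalent to a pp-power $\s D^{[n]}$ of $\s D$, and that a pp-power already absorbs pp-definitions: its relations are pp-definable over $\s D$, a $k$-ary relation on $D^n$ being read as an $nk$-ary relation on $D$. So it suffices to handle (a) the passage from $\s D$ to a pp-power $\s D^{[n]}$ and (b) the passage between two homomorphically equivalent structures, and then compose the two reductions; I only ever use equality of $\s D$, at the bottom of the tower, so no intermediate structure needs an equality relation.

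Step (b) is essentially formal. If $\s E$ and $\s E'$ are homomorphically equivalent via finite homomorphisms $p\colon\s E\to\s E'$ and $q\colon\s E'\to\s E$, they have the same language, hence the same instances, and for any Borel structure $\s X$ in that language, composing a Borel (resp.\ classical, resp.\ $\Delta^1_1$) homomorphism $\s X\to\s E$ with the fixed finite map $p$ gives one of the same kind into $\s E'$, and conversely via $q$. So solvability of $\s X$ is unchanged, the solution transfer is effective, and the set of codes of solvable instances is literally the same up to relabeling relation symbols; thus $\bp11$-ness, effectivizability, and essential classicality transfer between $\s E$ and $\s E'$ in both directions.

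Step (a) is the substance. Fix the defining pp-formulas $\varphi_R(\bar z)=\exists\bar w\,\psi_R(\bar z,\bar w)$ for the relations of $\s E':=\s D^{[n]}$, with each $\psi_R$ a conjunction of atomic $\s D$-formulas (possibly using $=$), and let $F_R$ be the finite set of existential variables of $\varphi_R$. Given a Borel instance $\s X$ of $\s E'$ with vertex set $X$, build a Borel instance $\Phi(\s X)$ of $\s D$ on the standard Borel space $\bigl(X\times\{0,\dots,n-1\}\bigr)\sqcup\bigsqcup_R\bigl(R^{\s X}\times F_R\bigr)$; for each $R$, each $\bar x\in R^{\s X}$, and each atomic conjunct of $\psi_R$, impose the corresponding constraint among the vertices $\{(x_a,i):a\le k_R,\ i<n\}\cup\{(\bar x,w):w\in F_R\}$. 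Equality conjuncts become equality constraints, which we can impose precisely because $=$ is a relation of $\s D$; this replaces what would otherwise be a quotient of $X$ by the (typically uncountably generated) Borel equivalence relation of forced identifications — a quotient that need not be standard Borel — and is exactly why the hypothesis on $\s D$ is needed. One checks $\Phi$ is an effective map on codes, and that a Borel (resp.\ classical, resp.\ $\Delta^1_1$) homomorphism $\hat h\colon\Phi(\s X)\to\s D$, restricted to $X\times\{0,\dots,n-1\}$ and bundled as $h(x)=(\hat h(x,0),\dots,\hat h(x,n-1))\in D^n$, is a homomorphism $\s X\to\s E'$ of the same kind, the auxiliary vertices witnessing the existentials. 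Conversely, from $h\colon\s X\to\s E'$ one recovers $\hat h$ on $X\times\{0,\dots,n-1\}$ directly and, on each fibre $\{\bar x\}\times F_R$, by selecting the lexicographically least $\bar w\in D^{F_R}$ with $\psi_R(h(\bar x),\bar w)$; since $\s D$ is a fixed finite structure this selection is Borel in $\bar x$ (and $\Delta^1_1$ if $h$ is), and causes no difficulty when only a classical solution is wanted. Hence $\s X$ is solvable over $\s E'$ iff $\Phi(\s X)$ is solvable over $\s D$, with all transfers Borel and effective.

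Composing (a) and (b), solvability of a Borel instance of $\csp_B(\s E)$ reduces by a Borel, effective map to solvability over $\s D$. If $\csp_B(\s D)$ is $\bp11$, the set of codes of solvable instances of $\s E$ is a Borel preimage of a $\bp11$ set, hence $\bp11$. If $\csp_B(\s D)$ is effectivizable, a $\Delta^1_1$ instance of $\s E$ with a Borel solution maps to a $\Delta^1_1$ instance of $\s D$ with a Borel solution, which has a $\Delta^1_1$ solution, which pulls back to a $\Delta^1_1$ solution of the original. If $\csp_B(\s D)$ is essentially classical, a solvable Borel instance of $\s E$ maps to a solvable Borel instance of $\s D$, which has a Borel solution, which pulls back. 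I expect the only real obstacle to be the bookkeeping in (a): verifying that the auxiliary vertex space stays standard Borel and every constraint set is Borel, and pinning down the point that equality constraints simulate variable identification without quotienting — everything else is routine precisely because $\s D$ is finite.
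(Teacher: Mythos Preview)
Your argument is correct but proceeds by a different decomposition than the paper's. The paper introduces \emph{simple constructions} (Definition~\ref{dfn: simple dfns}) --- pp-constructions that never invoke equality unless it is an actual relation of the structure --- and proves in Theorem~\ref{thm:simple reduction} that any simple construction induces a Borel, $\Delta^1_1$-in-codes reduction with explicit forward and backward solution maps $F,G,H$; the proof there handles all three links of the chain in Definition~\ref{dfn: pp defns} (singleton expansion of a core via the predicate $(=_c)$ of Lemma~\ref{lem:=c}, simple interpretation, and homomorphic equivalence). Corollary~\ref{cor:equality closure} then just observes that when $=$ is a relation of $\s D$, every pp-construction is already simple. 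You instead invoke the Barto--Opr\v{s}al--Pinsker characterization (pp-constructible $\Leftrightarrow$ homomorphically equivalent to a pp-power), which collapses the chain to just two steps and lets you skip the singleton-expansion case and the $(=_c)$ machinery entirely; your step~(a) is then essentially the paper's case~(2), streamlined because a pp-power has no quotient map. Your route is shorter for this corollary in isolation; the paper's route builds tools (simple constructions, Lemmas~\ref{lem:=c} and~\ref{lem:equation free}) that are reused heavily elsewhere for structures \emph{without} equality, which is the harder and more interesting regime.

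One correction: you cite Definition~\ref{dfn: pp defns} for the pp-power characterization, but that definition gives the chain-of-three-operations form. The equivalence you use is a theorem of \cite{reflections}, implicit in Theorem~\ref{thm:correspondences}(3) but not stated in the paper; you should cite it as such rather than as the definition.
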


The case of dichotomy theorems is somewhat mysterious. See the comments before Theorem \ref{thm: finite basis}. It remains to make the algebraic characterizations of these classes explicit and to test how far these results extend beyond structures with equality.

\begin{dfn}\label{dfn:basic identities}
Say that an operation $f: D^n\rightarrow D$ is
\begin{enumerate}
    \item \textbf{totally symmetric} if $f(x_1,...,x_n)$ only depends on $\{x_1,...,x_n\}$
    \item \textbf{Siggers} if $n=4$ and $f(r,a,r,e)=f(a,r,e,a)$
    \item A \textbf{dual discriminator} if $n=3$ and $f(x,y,z)$ is the repeated values among $x,y,$ and $z$ if there is on and $x$ otherwise.
\end{enumerate}
\end{dfn}

Our first results says that the intractable fork of the CSP dichotomy adapts to the Borel setting. 

\begin{thm}[See Theorem \ref{thm:intractable}]\label{thm:intractable intro}
If $\s D$ does not admit a Siggers polymorphism, then $\csp_B(\s D)$ is $\bs12$-complete.
\end{thm}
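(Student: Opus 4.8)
The plan is to establish the two halves of $\bs12$-completeness separately, with essentially all of the work in the hardness half. Membership in $\bs12$ is a routine computation with codes: a Borel structure $\s X$ on a standard Borel space $X$ admits a Borel homomorphism into the finite structure $\s D$ precisely when there is a real $c$ coding a finite Borel partition $X=\bigsqcup_{d\in D}A_d$ such that, for every relation symbol $R$ of $\s D$ and every $\bar x\in R^{\s X}$, one has $(d_1,\dots,d_k)\in R^{\s D}$ whenever $x_i\in A_{d_i}$ for all $i$. For a fixed code of $\s X$ this condition on $c$ is a finite conjunction of instances of the universal $\bp11$-set, the quantification over tuples being a universal real quantifier, under which $\bp11$ is closed; hence the condition is $\bp11$, and prefixing $\exists c$ over reals puts $\csp_B(\s D)$ in $\bs12$. (In the coding of Appendix \ref{appendix:codings} the bookkeeping is straightforward.)

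For hardness the plan is to run the Borel version of a gadget (pp-construction) reduction and then quote known $\bs12$-completeness for Borel $3$-colouring. Concretely: (i) since $\s D$ admits no Siggers polymorphism, $\s D$ pp-constructs $K_3$ --- this is the hardness side of the algebraic dichotomy; see Theorem \ref{thm:intractable equivalents}; (ii) a pp-construction of $K_3$ in $\s D$ induces a Borel reduction $\csp_B(K_3)\le_B\csp_B(\s D)$, by the Borel refinement of Theorem \ref{thm:correspondences} that underlies Theorem \ref{thm:equality closure intro}; and (iii) $\csp_B(K_3)$ is, up to coding, the collection of Borel graphs of Borel chromatic number at most $3$, which is $\bs12$-complete by the theorem of Todorcevic and Vidny\'anszky \cite{TV}. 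Composing a Borel reduction of an arbitrary $\bs12$ set into $\csp_B(K_3)$ with the reduction from (ii) shows that $\csp_B(\s D)$ is $\bs12$-hard, and together with the previous paragraph this yields $\bs12$-completeness.

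The substantive step is (ii): making the gadget reduction Borel and checking that it preserves \emph{Borel} solvability in both directions. That the map sending a Borel $K_3$-instance $\s Y$ to the constructed $\s D$-instance $\s X$ is Borel is immediate, since the gadgets have bounded size and are glued along a Borel incidence structure, and the ``if'' direction is immediate as well, by postcomposing a Borel $\s D$-solution of $\s X$ with the fixed finite maps supplied by the interpretation. The ``only if'' direction is where care is needed: one must \emph{extend} a Borel proper $3$-colouring $f$ of $\s Y$ to a Borel $\s D$-solution of $\s X$, which comes down to choosing, Borel-measurably and constraint by constraint, the gadget-internal and existential witnesses; this is possible because, for each constraint, the set of admissible witness tuples is a non-empty \emph{finite} subset of some power $D^m$ depending Borel-measurably on the restriction of $f$ to the finitely many variables in scope, so one may take (say) the lexicographically least. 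I expect the real obstacle to be the treatment of the equality atoms that occur in the pp-formulas --- precisely the phenomenon responsible for the equality hypothesis in Theorem \ref{thm:equality closure intro}. When $\s D$ does not itself carry equality, such an atom would have to be realised by identifying two variables of $\s X$, and an arbitrary such identification need not leave $\s X$ with a standard Borel quotient (the induced equivalence relation can be non-smooth). This should be handled either by choosing the pp-construction of the fixed target $K_3$ so that no equality atom between distinct representative blocks occurs --- so that the reduction imposes only genuine $\s D$-constraints and $\s X$ stays standard Borel --- or by first proving the theorem with equality adjoined to $\s D$ (which has the same polymorphisms, hence still no Siggers polymorphism) and then removing that hypothesis.
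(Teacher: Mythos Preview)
Your high-level architecture matches the paper's: reduce from $\csp_B(K_3)$ via the algebraic classification of intractable structures, and invoke Todorcevic--Vidny\'anszky for $\bs12$-hardness of Borel $3$-colouring. You also correctly isolate the real difficulty: equality atoms in a pp construction force identifications that need not yield a standard Borel quotient, so naively porting the classical gadget reduction does not give a Borel reduction.

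Where your proposal has a genuine gap is in your two suggested fixes for the equality problem. The second fix---prove the result for $\s D\cup\{(=)\}$ and ``then remove that hypothesis''---does not work: adjoining equality changes the Borel CSP, and there is no general reduction from $\csp_B(\s D\cup\{(=)\})$ to $\csp_B(\s D)$ (indeed, that is exactly the obstruction you are trying to overcome). The first fix---choose a pp construction of $K_3$ that happens not to use equality---is the right target, but you give no argument that such a construction exists, and this is precisely the content of the paper's Section~\ref{section:simple} and Lemma~\ref{lem: HS intractable}. The paper introduces \emph{simple constructions} (pp constructions with equality excised), proves in Theorem~\ref{thm:simple reduction} that simple constructions yield Borel reductions, and then shows that every intractable $\s D$ simply constructs $3\sat$ (hence $K_3$). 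The last step is nontrivial: it combines Lemma~\ref{lem:=c} (singleton expansion of a core is simple, via the predicate $(=_c)$), Lemma~\ref{lem:equation free} (a pp-definable relation that implies no equation is already simply definable), and---crucially---the Bulatov--Jeavons Lemma~\ref{lem: HS intractable}, which upgrades the soft fact that $\pol(3\sat)\in\HSP(\pol(\s D))$ to $\pol(3\sat)\in\HS(\pol(\s D))$, eliminating the power step that would otherwise reintroduce a quotient. Without this $\HS$ strengthening, your plan to ``choose the pp-construction so that no equality atom occurs'' has no teeth. You should also note that the machinery you cite in step~(ii)---``the Borel refinement \ldots\ that underlies Theorem~\ref{thm:equality closure intro}''---is stated in the paper only under the hypothesis that $\s D$ carries equality, so invoking it here is circular.
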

\begin{cor}[P$\not=$NP] \label{cor: p np intractable intro}
If $\csp(\s D)$ is NP-complete, then $\csp_B(\s D)$ is $\bs12$-complete
\end{cor}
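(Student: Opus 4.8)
The plan is to read the CSP Dichotomy Theorem contrapositively and feed it into Theorem~\ref{thm:intractable intro}; there is essentially no new argument to make.

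First I would record the trivial observation that a polymorphism $f\colon \s D^4\to\s D$ satisfying $f(r,a,r,e)=f(a,r,e,a)$ for all $a,e,r$ is exactly a Siggers polymorphism in the sense of Definition~\ref{dfn:basic identities}. With this identification, Theorem~\ref{thm:CSP dichotomy theorem} says: $\csp(\s D)$ is polynomial time solvable if $\s D$ has a Siggers polymorphism, and NP-complete otherwise. Now suppose P$\neq$NP and that $\csp(\s D)$ is NP-complete. Then $\csp(\s D)$ is not polynomial time solvable, so by Theorem~\ref{thm:CSP dichotomy theorem} the structure $\s D$ admits no Siggers polymorphism. Applying Theorem~\ref{thm:intractable intro} to $\s D$ then gives that $\csp_B(\s D)$ is $\bs12$-complete, which is the claim.

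The one point worth a sentence is that ``$\bs12$-complete'' already bundles together membership in $\bs12$ and $\bs12$-hardness, and that Theorem~\ref{thm:intractable intro} supplies both; so I need not separately verify the upper bound here. (For completeness: a Borel homomorphism of a Borel structure into the fixed finite structure $\s D$ is witnessed by a real coding a finite Borel partition of the domain subject to an arithmetic condition on the code, so $\csp_B(\s D)\in\bs12$ for every finite relational $\s D$.) Accordingly, I do not expect any obstacle in this corollary: the substantive content lives entirely in Theorem~\ref{thm:intractable intro} and in the CSP Dichotomy Theorem, and the corollary is just the translation of the algebraic hypothesis ``$\s D$ has no Siggers polymorphism'' into the complexity-theoretic hypothesis ``$\csp(\s D)$ is NP-complete'' under the assumption P$\neq$NP.
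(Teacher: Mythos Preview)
Your proposal is correct and matches the paper's approach exactly: the corollary is stated immediately after Theorem~\ref{thm:intractable} with no separate proof, the intended argument being precisely the contrapositive of the CSP Dichotomy Theorem fed into that theorem. Your aside about the $\bs12$ upper bound is a harmless addition but, as you note, already absorbed into the completeness claim of Theorem~\ref{thm:intractable intro}.
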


This theorem gives several interesting new examples of $\bs12$-complete problems in Borel combinatorics. We can also use this theorem to import other finitary complexity dichotomy wholesale. For instance, Corollary \ref{thm:hellnesetril} gives a descriptive set theoretic analog of the Hell--Ne\v set\v ril theorem on graphs.

It is natural to ask if the converse of Theorem \ref{thm:intractable intro} holds:

\begin{prb}\label{prb: intractable converse}
Is there some $\s D$ with $\csp_B(\s D)\in$P but $\csp_B(\s D)$ $\bs12$-complete?
\end{prb}

More generally, we can ask if a Borel CSP complexity dichotomy holds. The answer is yes if we assume $\bs12$-determinacy \cite[Remark 3.3]{TV}, but it is open if this holds in ZFC:

\begin{cnj} \label{cnj: dichotomy}
For every $\s D$, $\csp_B(\s D)$ either $\bp11$ or $\bs12$-complete.
\end{cnj}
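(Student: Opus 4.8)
The plan is to condition on whether $\s D$ admits a Siggers polymorphism, since that is the one algebraic dividing line whose Borel analogue we already control. If $\s D$ has no Siggers polymorphism, Theorem \ref{thm:intractable intro} puts $\csp_B(\s D)$ at $\bs12$-complete and we are done, so the content is the case where $\s D$ has a Siggers polymorphism and hence $\csp(\s D)\in\mathrm P$ by the CSP Dichotomy Theorem; there we must show that no intermediate Borel complexity arises, even though --- by Problem \ref{prb: intractable converse} --- we should not expect $\csp_B(\s D)$ to be $\bp11$ for every such $\s D$. Note that $\csp_B(\s D)$ is always $\bs12$: a Borel homomorphism into the finite structure $\s D$ is coded by finitely many Borel codes, and ``these codes describe a homomorphism'' is $\bp11$, so Borel solvability is the projection of a $\bp11$ set. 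Thus for each Siggers $\s D$ it suffices to produce either a $\bp11$ upper bound or a Borel reduction of a fixed $\bs12$-complete set into $\csp_B(\s D)$.

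I would then reduce to a small generating family of ``coordinate'' templates via the pp-construction machinery. By Theorem \ref{thm:equality closure intro}, $\bp11$-ness (after adjoining equality) passes from $\s D$ to every $\s E$ pp-constructible in $\s D$; conversely a pp-construction of $\s E$ inside $\s D$ yields a Borel reduction $\csp_B(\s E)\le\csp_B(\s D)$, so $\bs12$-hardness passes upward. Hence it is enough to classify the Borel complexity of a family generating the tractable templates under pp-constructions. Zhuk's and Bulatov's structure theory presents every tractable idempotent template by a bounded recursion whose base cases are (a) affine templates --- in effect, systems of linear equations over a finite abelian group --- and (b) strictly simple templates of bounded width, and whose recursion step glues a quotient template to its congruence classes. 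So I would (i) settle the dichotomy for the two base families and (ii) show it is preserved by the gluing step, which is a fibered-instance argument: a Borel solution of the glued instance decomposes as a Borel solution of the quotient instance together with Borel solutions on the fibers (themselves Borel instances of the subalgebra CSPs), and one pushes the $\bp11$ bounds, or the hardness witnesses, through this composition.

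The main obstacle is base case (a). Borel solvability of a Borel system of linear equations over $\F_p$ is precisely the vanishing of a definable first cohomology class, and deciding when such a class is ``Borel trivial'' rather than genuinely $\bs12$-complete runs into open problems in definable cohomology and the theory of Borel equivalence relations --- it already subsumes the delicate question of Borel $2$-colorability, $\csp_B(K_2)$. I would attack it through the orbit equivalence relation generated by the affine moves on the space of candidate solutions: if that relation is smooth on a Borel invariant comeager (or conull) piece one expects a $\bp11$ bound, and otherwise one hopes to transport a $\bs12$-complete set along a witness to non-smoothness. Base case (b) should be gentler --- the width $1$ case is already known to be essentially classical, hence $\bp11$ --- so I would run constraint propagation as a monotone Borel operator on the Borel space of partial patterns, take its fixed point at a countable stage, and argue, via a Borel adaptation of the bounded-width correctness proof (the absorption and ``Prague strategy'' arguments of Barto--Kozik), that non-appearance of the empty constraint is equivalent to Borel solvability, a $\bp11$ condition; a failure of this Borel correctness proof should itself localize a $\bs12$-hard subproblem.

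Finally I would record that, modulo these two inputs, the conjecture follows, and that it follows unconditionally under $\bs12$-determinacy by \cite[Remark 3.3]{TV}, so the real task is to remove that hypothesis. I expect (a), the affine case, to be where essentially all of the difficulty concentrates: it is the unique tractable regime that is not of bounded width, it is exactly the place where classical and effective tools are known to be insufficient in Borel combinatorics, and it is where the present circle of techniques makes contact with genuinely open problems about definable cohomology.
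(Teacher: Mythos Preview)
This statement is Conjecture~\ref{cnj: dichotomy}; the paper leaves it open in ZFC and gives no proof, so there is nothing to compare your argument against as a proof. What the paper \emph{does} supply is a conditional reduction of the conjecture to two concrete open problems, and your proposal should be measured against that.

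Your route through the Zhuk--Bulatov structural recursion is more elaborate than necessary. The paper's Proposition at the end of Section~\ref{section:effective} shows that the conjecture follows from Problems~\ref{prb: bddwidth} and~\ref{prb: lin alg} directly, with no gluing step: by Corollary~\ref{cor:not bdd width}, any $\s D$ that is not bounded width \emph{simply} constructs some $\bb F(3)$, and Theorem~\ref{thm:simple reduction} then yields a Borel reduction $\csp_B(\bb F(3))\le_B\csp_B(\s D)$. So if every $\csp_B(\bb F(3))$ is $\bs12$-complete, every non--bounded-width template is $\bs12$-complete; and if every bounded-width template is effectivizable, those are $\bp11$. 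This bypasses both your fibered gluing argument and your appeal to Theorem~\ref{thm:equality closure intro}, which would only transfer $\bp11$-ness along pp-constructions for templates carrying equality---a restriction the simple-construction route avoids entirely. You have correctly identified the two genuine obstacles (bounded width effectivization and the affine case), but the reduction to them is a one-line application of the Barto--Kozik bounded-width theorem rather than a recursion through Zhuk's absorption machinery.

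One factual slip: $\csp_B(K_2)$ is not delicate. $K_2$ has the majority function as a dual discriminator polymorphism, so $\csp_B(K_2)$ is effectivizable and $\bp11$ by Theorem~\ref{thm: dd effective} (this is also the bipartite case of Corollary~\ref{thm:hellnesetril}). The open affine case is $\bb F(3)$ for a finite field---Problem~\ref{prb: lin alg}---not Borel $2$-colorability.
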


Our second main result gives a complete characterization of essentially classical structures. This builds on the characterization of width 1 structures by Dalmau and Pearson \ref{thm:width1 characterization}.

\begin{thm} \label{thm: classical intro}
For any finite relational structure $\s D$, $\csp_B(\s D)$ is essentially classical if and only if $\s D$ admits totally symmetric polymorphism of arbitrarily high arity.
\end{thm}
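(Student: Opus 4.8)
The plan is to prove both directions of the equivalence, the forward one (essentially classical $\Rightarrow$ totally symmetric polymorphisms of all arities) by contraposition. Throughout I use the Dalmau--Pearson characterization (Theorem~\ref{thm:width1 characterization}): $\s D$ admits totally symmetric polymorphisms of arbitrarily high arity if and only if $\csp(\s D)$ has width $1$, i.e.\ is decided by the arc consistency algorithm.

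For the direction ``width $1$ $\Rightarrow$ essentially classical'', let $\s X$ be a Borel instance admitting some, a priori non-Borel, homomorphism $h\colon\s X\to\s D$. First I would form the arc-consistency closure of $\s X$: the assignment $x\mapsto S(x)\in\pow{D}\sm\{\emptyset\}$ of the values surviving maximally-parallel constraint propagation, which (run in parallel) stabilizes after $\omega$ stages, and which I will take to be Borel (see the obstacle below). Since $h$ is a solution, $h(x)\in S(x)$ for all $x$, so no $S(x)$ is empty; and since $\s D$ has width $1$, $S$ is a ``partial solution'': it is closed under propagation through the constraints of $\s X$. Now fix once and for all a totally symmetric polymorphism $f$ of $\s D$ of some arity $N$ with $N\ge |D|\cdot\ell$, where $\ell$ is the maximal arity of a relation of $\s D$, and define $g\colon\pow{D}\sm\{\emptyset\}\to D$ by $g(A)=f(a_1,\dots,a_m,a_1,\dots,a_1)$, where $A=\{a_1,\dots,a_m\}$ and the list is padded to length $N$ with copies of $a_1$; this is well defined because $f$ depends only on the set of its arguments. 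I claim $x\mapsto g(S(x))$ is a Borel homomorphism. It is Borel since $S$ is. For the homomorphism property, fix a constraint $R(x_1,\dots,x_k)$ of $\s X$ (so $k\le\ell$): closure of $S$ under propagation gives, for each position $j\le k$ and each $a\in S(x_j)$, a tuple $t^{(j,a)}\in R^{\s D}$ with $j$-th coordinate $a$ and $i$-th coordinate in $S(x_i)$ for all $i$. There are at most $|D|\cdot k\le N$ such tuples; feed the padded list of them into $f$ coordinatewise. Since $f$ is a polymorphism the result lies in $R^{\s D}$, and since $f$ is totally symmetric its $i$-th entry depends only on the set of $i$-th coordinates appearing, which is exactly $S(x_i)$ --- it contains each $a\in S(x_i)$ via $t^{(i,a)}$ and is contained in $S(x_i)$ by construction --- so that entry equals $g(S(x_i))$. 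Hence $(g(S(x_1)),\dots,g(S(x_k)))\in R^{\s D}$.

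For the direction ``not width $1$ $\Rightarrow$ not essentially classical'', suppose $\csp(\s D)$ does not have width $1$, so arc consistency is not complete: there is a finite instance $\s Y$ whose arc-consistency closure is nonempty at every variable but which admits no homomorphism to $\s D$, and we may take $\s Y$ minimal. The goal is to ``smear $\s Y$ out'' over a Polish space in the style of the $G_0$ construction: I would build a Borel instance $\s X$ on a product space $\prod_n F_n$ by gluing copies of individual constraints of $\s Y$ along a suitably generic/dense sequence of finite matchings --- so that $\s X$ is, informally, a Borel version of the universal cover $\tilde{\s Y}$ of the incidence structure of $\s Y$ --- engineered so that (i) every finite subinstance of $\s X$ embeds into $\tilde{\s Y}$, which, being arc-consistent and acyclic, is solvable; hence every finite subinstance of $\s X$ is solvable and $\s X$ is classically solvable by compactness (all finite subinstances map to $\s D$, and $D$ is finite); while (ii) any Borel homomorphism $\s X\to\s D$, being finite-valued, is homogeneous on a comeager set in a way that, by the density of the chosen matchings, forces the values to be consistent around a cycle of $\s Y$, i.e.\ forces a coherent one-value-per-variable choice solving $\s Y$ --- a contradiction. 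Thus a Borel instance of $\s D$ with a solution but no Borel solution exists, so $\csp_B(\s D)$ is not essentially classical.

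I expect the main obstacle to be the construction in the second direction: matching the density of the gluing matchings to the arc-consistency structure of $\s Y$ precisely enough that solvability of all finite subinstances (needed for a classical solution) coexists with a Baire-category argument that recovers a solution of the unsolvable $\s Y$ from a hypothetical Borel one. A secondary, more technical point --- used implicitly in the first direction --- is verifying that the arc-consistency closure of a Borel instance is again Borel; the naive propagation step is only $\bs11$-definable, so this requires an argument, presumably an effectivization on $\Delta^1_1$ instances (where the effectivizability of width-$1$ structures enters) followed by relativization.
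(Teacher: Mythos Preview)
Your overall strategy matches the paper's: the ``if'' direction via arc consistency plus a high-arity totally symmetric polymorphism, and the ``only if'' direction by a $G_0$-style construction on a finite arc-consistent but unsolvable instance. Two points deserve comment.

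For the forward direction, the paper handles exactly the obstacle you flag. Rather than relativizing an effectivization result, it works directly with the reflection theorem: reformulating arc consistency via ``good witnesses'' (a function $f\colon X\to\pow D$ satisfying a $\Pi^1_1$-on-$\Sigma^1_1$ closure condition), it builds an increasing $\Delta^1_1$ sequence $f_0\subseteq\overline{f_0}\subseteq f_1\subseteq\cdots$ whose union is a $\Delta^1_1$ witness. Your proposed route (effectivize on $\Delta^1_1$ instances, then relativize) would also work, but the reflection argument is what the paper actually does, and it simultaneously yields effectivizability.

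For the converse, your sketch is in the right spirit but the paper sharpens it in a way that removes the vagueness you worry about. The paper does not work directly with a minimal bad instance $\s Y$; instead it first observes (via the bounded-width theorem or a simple construction of $\bb F(3)$) that some instance is arc-consistent but not \emph{cycle}-consistent, and uses this to produce a finite arc-consistent instance $\s X$ of the simple closure $\pf{\s D}$ together with an acyclic lift $\s Y\to\s X$ and a single point $x$ such that no solution of $\s Y$ is constant on the fiber over $x$. From this it extracts a single $n$-ary relation $R$, simply definable in $\s D$, with the two properties: every acyclic instance of $R$ has a solution, and $R$ contains no constant tuple. The $G_0$ construction is then run purely with $R$ on $\{1,\dots,n\}^\omega$, and the Baire-category step becomes a one-liner: a Baire measurable solution is constant on some $R$-tuple, contradiction. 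Your ``cohere around a cycle of $\s Y$'' step is precisely what this reduction is designed to eliminate; without isolating such an $R$ (or equivalently without the cycle-consistency lemma), making that step precise is where your construction would get stuck.
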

One direction is Theorem \ref{thm:width1 implies classical} and the other is Theorem  \ref{thm: classical implies width1}. Totally symmetric polymorphisms are quite strong, so the main content of this theorem is to rule out exotic essentially classical problems. Roughly, the only method to prove a problem is essentially classical is the reflection theorem. By examining the proof, we can get the following:
\begin{cor} \label{cor:classical implies effective}
If $\csp_B(\s D)$ is essentially classical, it is effectivizable.
\end{cor}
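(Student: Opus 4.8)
The plan is to show that the proof of Theorem~\ref{thm:width1 implies classical}---the ``width~$1$ $\Rightarrow$ essentially classical'' direction of Theorem~\ref{thm: classical intro}---is \emph{effective}, and hence upgrades a $\Delta^1_1$ instance to a $\Delta^1_1$ solution. Recall the shape of that argument. By Theorem~\ref{thm: classical intro} and the Dalmau--Pearson characterization (Theorem~\ref{thm:width1 characterization}), the hypothesis that $\csp_B(\s D)$ is essentially classical is equivalent to $\s D$ being a width~$1$ structure, i.e.\ to $\s D$ admitting totally symmetric polymorphisms of arbitrarily high arity. From a suitable family of such polymorphisms one extracts, exactly as in the finitary theory, a single map $f\colon\pow{D}\sm\{\emptyset\}\to D$ which is a homomorphism from the associated power structure $\mathcal{P}^{+}(\s D)$ (whose elements are the non-empty subsets of $D$) into $\s D$; this is a fixed finite piece of data attached to $\s D$. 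For a solvable Borel instance $\s X$ the real work is to produce a \emph{Borel} homomorphism $g\colon\s X\to\mathcal{P}^{+}(\s D)$, after which $f\circ g$ is a Borel solution. The obvious candidate for $g$---the arc-consistency closure $x\mapsto P_\infty(x)$, equivalently $x\mapsto\{\,h(x): h\text{ a solution of }\s X\,\}$---is not Borel in general (it is a transfinite fixed point whose closure ordinal is not controlled), so $g$ is manufactured from it by a reflection argument.

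To prove Corollary~\ref{cor:classical implies effective} I would run this construction on a $\Delta^1_1$ instance $\s X$ that admits a Borel (equivalently, by essential classicality, any) solution, and keep track of light-face complexity throughout. The algebraic half contributes nothing: $f$ is a fixed map on the finite set $\pow{D}\sm\{\emptyset\}$, so if $g$ is $\Delta^1_1$ then so is $f\circ g$. Everything therefore reduces to checking that the reflection step can be carried out effectively, i.e.\ that one obtains a $\Delta^1_1$ homomorphism $g\colon\s X\to\mathcal{P}^{+}(\s D)$ uniformly in a $\Delta^1_1$ code for $\s X$. I expect this to follow from effective transfinite recursion, the boundedness theorem, and the light-face form of the reflection theorem invoked in the proof of Theorem~\ref{thm:width1 implies classical}---in precisely the way in which, as the introduction observes, non-trivial upper bounds in Borel combinatorics are routinely accompanied by Gandy--Harrington / effectivity arguments.

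The main obstacle is the one the boldface proof must already confront, now with light-face bookkeeping attached. The arc-consistency closure of a $\Delta^1_1$ instance is a genuine coanalytic-type object rather than a $\Delta^1_1$ one---its ``killed'' complement is built by a monotone induction whose well-founded rank can be as large as $\omega_1^{\mathrm{ck}}$---so $P_\infty$ cannot simply be ``computed'' and fed into a solution. What makes the reflection succeed is exactly the standing hypothesis that a solution exists: this is what guarantees the reflected Borel object has non-empty sections and satisfies the arc-consistency condition. I expect the verification to amount to re-reading the proof of Theorem~\ref{thm:width1 implies classical} with the effective versions of the reflection and recursion lemmas substituted in, and confirming that the substitution is uniform in a $\Delta^1_1$ code for $\s X$. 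That is fiddly but routine; the conceptual content is already present in Theorem~\ref{thm:width1 implies classical}, and the corollary is essentially the statement that that argument effectivizes.
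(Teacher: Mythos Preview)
Your approach is correct and follows the same logical route as the paper: essentially classical $\Rightarrow$ width~$1$ (Theorem~\ref{thm: classical implies width1}) $\Rightarrow$ effectivizable (Theorem~\ref{thm:width1 implies classical}). However, you are doing more work than needed: Theorem~\ref{thm:width1 implies classical} already states ``essentially classical \emph{and effectivizable}'' as its conclusion, and its proof already produces a $\Delta^1_1$ good witness via the first reflection theorem and then a $\Delta^1_1$ solution via Lusin--Novikov. So there is nothing to re-verify or ``substitute in''; the paper's proof of the corollary is literally two lines citing the two theorems, and your proposed re-reading of the reflection argument is exactly the content of the proof of Theorem~\ref{thm:width1 implies classical} that you already have available.
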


For effectivizable CSPs in general, we have the following partial result.

\begin{thm}[See Theorem \ref{thm: dd effective}] \label{thm:dd effective intro}
If $\s D$ has a dual discriminator polymorphism, then $\csp_B(\s D)$ is effectivizable.
\end{thm}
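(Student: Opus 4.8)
The plan is to run a Gandy–Harrington argument: given a $\Delta^1_1$ instance $\s X$ of $\csp(\s D)$ that admits a Borel homomorphism into $\s D$, I want to build a $\Delta^1_1$ homomorphism. The dual discriminator polymorphism $f$ is what lets us combine partial information; recall that structures with a dual discriminator polymorphism have width $2$ (they are preserved by the ``$2$-consistency'' closure), so the key point is that the obstruction to solving $\s X$ is purely local in a controlled way. Concretely, for each element $x$ of $\s X$ let $S_x \subseteq D$ be the set of values that $x$ can take in \emph{some} Borel solution — more precisely the set of $d$ such that $\{x \mapsto d\}$ extends to a Borel homomorphism on all of $\s X$. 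Since $\s X$ is $\Delta^1_1$ and Borel solutions are coded by reals, each $S_x$ is (uniformly) $\bs12$, but the dual discriminator will force these sets to be small and definable.

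First I would establish the ``list homomorphism'' stability: if $f$ is a dual discriminator polymorphism of $\s D$, then for any instance $\s X$ and any assignment of nonempty lists $L_x \subseteq D$ that is $2$-consistent (every constraint can be satisfied using the lists, and pairwise consistency propagates), applying $f$ coordinatewise to three solutions shows that the ``common refinement'' behaves well — in particular the set of achievable values at $x$, intersected over all Borel solutions, is itself closed under the relevant operations and hence is again a valid list. The upshot I want is: either $\s X$ has no solution, or there is a canonical ($\Delta^1_1$-in-the-code) assignment $x \mapsto \hat S_x$ of nonempty lists such that \emph{every} choice function $x \mapsto d_x$ with $d_x \in \hat S_x$ respecting $2$-consistency is a homomorphism, and $\hat S_x$ is a singleton or a set on which $\s D$ restricts to a structure with a near-unanimity/semilattice-type polymorphism where choices are free. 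The dual discriminator specifically gives that after $2$-consistency reduction the residual constraints are either ``permutations'' or ``two-fan'' relations, which are the relations invariant under the dual discriminator (this is the classical analysis of dual-discriminator-closed relational clones).

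Then the effectivization step: I would run Gandy–Harrington forcing over the $\Delta^1_1$ instance. Because $\csp_B(\s D)$ is known to be $\bp11$ for such $\s D$ (dual discriminator $\Rightarrow$ bounded width $\Rightarrow$, by the results bounding projective complexity of bounded width structures mentioned in the abstract, $\bp11$), the statement ``$\s X$ has a Borel solution'' is $\bp11$ in the code of $\s X$, hence by Spector–Gandy it is expressible by a $\Pi^1_1$-over-$\Delta^1_1$ fact, and one can then use the reflection theorem / Gandy basis theorem to find a $\Delta^1_1$ solution: first reduce to the $\Delta^1_1$ list assignment $\hat S$, then note that on the reduced instance the constraint structure is thin enough (each relation a disjoint union of graphs of bijections, up to the two-fan pieces) that a greedy $\Delta^1_1$ recursion along a $\Delta^1_1$ enumeration of $\s X$'s universe produces a solution, since at each step the set of ``bad'' values to avoid is $\Delta^1_1$ and proper.

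The main obstacle I expect is showing that the passage to the reduced list assignment $\hat S$ stays $\Delta^1_1$: a priori $\hat S_x$ is the intersection over all Borel solutions, which is $\bp12$. To get it down to $\Delta^1_1$ I need to replace ``all Borel solutions'' by ``a $\Delta^1_1$-definable closure operation'' — this is exactly where the algebra has to be used effectively: the $2$-consistency closure is a monotone $\Delta^1_1$ operator on list assignments, so its least fixed point $\hat S$ is $\Delta^1_1$, and the content of the dual-discriminator hypothesis (via the earlier theorems on width) is that this combinatorial closure already computes the ``semantic'' closure, i.e. $2$-consistency is complete for $\s D$. Once that identification is in hand, everything below is a routine effective recursion, but verifying that the width-$2$/dual-discriminator algebra makes $2$-consistency not merely sound but \emph{complete} — and doing so in a way that is visibly effective — is the crux.
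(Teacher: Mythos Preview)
Your structural analysis is correct: the dual discriminator forces relations to decompose into unary predicates, graphs of permutations $R_g$, and ``two-fan'' relations $R_{a,b}(x,y)\Leftrightarrow x=a\vee y=b$, and this is exactly what the paper uses (the Folklore proposition preceding Theorem~\ref{thm: dd effective}). But the effectivization step has two real gaps.

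First, a circularity: you invoke ``$\csp_B(\s D)$ is known to be $\bp11$'' for bounded width $\s D$, citing the abstract. That is precisely what is being established here; Problem~\ref{prb: bddwidth} explicitly leaves open whether all bounded width Borel CSPs are effectivizable, and the dual discriminator case is the only one the paper settles. You cannot use the $\bp11$ bound as input to Spector--Gandy.

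Second, and more seriously, ``a greedy $\Delta^1_1$ recursion along a $\Delta^1_1$ enumeration of $\s X$'s universe'' is not a valid construction: the universe is (an uncountable Borel subset of) $\baire$, and there is no $\Delta^1_1$ wellordering to recurse along. Even after your $2$-consistency reduction yields a $\Delta^1_1$ list assignment, the residual instance still carries $R_{a,b}$ constraints, and fixing a value at one point forces values along arbitrarily long chains of such relations. Iterating this propagation over uncountably many starting points is exactly the step that naively leaves $\Delta^1_1$, and your proposal does not say how to control it.

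The paper's route is different in kind. Rather than build a single solution greedily, it reformulates the problem so as to invoke a black-box from \cite{effectivization}: a (Borel, resp.\ $\Delta^1_1$) solution exists iff $\baire$ is covered by countably many (Borel, resp.\ $\Delta^1_1$) \emph{partial} functions $f_i$, each closed under the propagation rules for $R_g$ and $R_{a,b}$. The combining step $f(x):=f_{n(x)}(x)$, where $n(x)$ is least with $x\in\dom(f_{n(x)})$, works precisely because for each dual-discriminator relation whichever endpoint is decided first forces a consistent value at the other. The cited theorem then says that such a countable cover by Borel pieces can be replaced by one with $\Delta^1_1$ pieces; that is where the reflection happens, not in a pointwise recursion. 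Your $2$-consistency closure is morally the same propagation operator, but the countable-cover reformulation is the missing idea that makes the effectivization go through.
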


This theorem along with Theorem \ref{thm:intractable intro} gives a descriptive analog of the Barto--Kozik--Niven theorem on smooth digraphs, Corollary \ref{cor: smooth digraphs}. The structures indicated in the above theorem are the simplest from the class of bounded width structures (see definition \ref{dfn:cycle consistent}). The following question is natural:

\begin{prb} \label{prb: bddwidth}
Is every bounded width Borel CSP effectivizable?
\end{prb}

These results are almost enough to recover a descriptive set theoretic analog of Schaefer's dichotomy for Boolean CSPs \cite{Schaefer}.

\begin{dfn}\label{dfn: basic structures}
For $k\in \N$, $k$SAT is the structure on $\{0,1\}$ equipped with each of the following relations: for $0\leq i\leq k$
\[D_i(x_1,...,x_k):\lra \left(\bigvee_{j\leq i} \neg x_j\right)\vee \left(\bigvee_{j>i} x_{j}\right)=1\]
For $\bb F$ a finite field, we write $\bb F(k)$ with the structure whose domain is the same as $\bb F$ equpped with one relation for each affine subset of $\bb F^k$, i.e. each set of the form
\[\left\{ (x_1,...,x_n): \sum_i a_ix_i=a_{n+1} \right\}\] for $a_1,...,a_{n+1}\in \bb F$.
\end{dfn}

Note that an instance of $k$SAT is a Boolean formula in $k$CNF. This is abusing notation slightly as computer scientists typically refer to $\csp(k\sat)$ as $k\sat$, but hopefully this will not cause confusion. And, it turns out that all of the structures $\bb F(k)$ are equivalent for all $k\geq 3$ (see the comments after Definition \ref{dfn: pp defns}).

\begin{cor}[See Corollary \ref{cor: schaefer}]\label{cor: schaefer intro}
If $\s D$ is a structure on $\{0,1\}$, one of the following holds:
\begin{enumerate}
    \item $\s D$ has a totally symmetric polymorphism and $\csp(\s D)$ is essentially classical and effectivizable
    \item $\s D$ is pp constructible in 2SAT and $\csp_B(\s D)$ is effectivizable
    \item $\csp(\s D)$ is NP-complete and $\csp_B(\s D)$ is $\bs12$-complete, or
    \item $\s D$ is pp constructible in $\bb F_2(3)$ and vice versa.
\end{enumerate}
\end{cor}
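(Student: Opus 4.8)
The plan is to combine the classical Schaefer dichotomy (or rather its modern algebraic restatement via Post's lattice) with the Borel refinements established earlier in the paper. First I would recall that by Post's classification of clones on $\{0,1\}$, for any structure $\s D$ on $\{0,1\}$ the polymorphism clone $\pol(\s D)$ is one of a well-understood list, and the algebraic form of Schaefer's theorem says that exactly one of the following holds: (a) $\s D$ has a totally symmetric polymorphism of every arity — equivalently $\pol(\s D)$ contains all constant operations, or both min and max, i.e.\ $\s D$ is \emph{bijunctive-free} in the "0-valid/1-valid/2-SAT-Horn/dual-Horn" sense that yields arbitrary-arity semilattice or near-unanimity-of-symmetric type behavior; (b) $\s D$ is pp constructible in 2SAT, i.e.\ $\pol(\s D)$ contains the majority operation but not (a); (c) $\pol(\s D)$ contains no Siggers (equivalently no Taylor) operation, so $\csp(\s D)$ is NP-complete; or (d) $\pol(\s D)$ contains the ternary affine operation $x+y+z$ over $\F_2$ but falls under none of (a)–(c), which is exactly the case $\s D$ is pp-equivalent to $\F_2(3)$. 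This case analysis is entirely classical and I would cite Schaefer \cite{Schaefer} together with Post's lattice; the only work is matching the four clone-theoretic cases to the four bullet formulations, in particular identifying "totally symmetric polymorphisms of all arities" with the union of Schaefer's tractable-by-propagation cases.

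Next, for each case I would invoke the Borel results of the paper to upgrade the descriptive-complexity conclusion. In case (a), Theorem~\ref{thm: classical intro} gives that $\csp_B(\s D)$ is essentially classical (since $\s D$ has totally symmetric polymorphisms of arbitrarily high arity), and Corollary~\ref{cor:classical implies effective} then gives effectivizability; so the first bullet follows immediately. In case (b), a structure pp constructible in 2SAT has a majority polymorphism, hence a dual discriminator polymorphism (on a two-element domain the majority operation \emph{is} the dual discriminator), so Theorem~\ref{thm:dd effective intro} applies and $\csp_B(\s D)$ is effectivizable; this is the second bullet. In case (c), $\s D$ admits no Siggers polymorphism, so Theorem~\ref{thm:intractable intro} gives that $\csp_B(\s D)$ is $\bs12$-complete, which is the third bullet. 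Case (d) is recorded as is — here $\s D$ is pp-equivalent to $\F_2(3)$ and the bullet makes no complexity claim beyond that equivalence.

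The main subtlety — and the step I expect to carry the real content — is the \emph{disjointness/exhaustiveness} bookkeeping: verifying that these four cases partition all structures on $\{0,1\}$ and that the boundary cases land correctly. Concretely, one must check: that a two-element structure with a majority polymorphism but no arbitrary-arity totally symmetric polymorphisms is exactly one that is pp constructible in 2SAT (this uses that $\clo$ on $\{0,1\}$ containing the majority but neither both semilattice operations nor a constant forces $\pol(\s D)$ into the bijunctive part of Post's lattice); that the affine case is genuinely separate, i.e.\ a structure whose clone is generated (together with projections) by $x+y+z$ over $\F_2$ and possibly constants has no majority and no totally symmetric polymorphism of arity $4$ yet does have a Siggers polymorphism (indeed $x_1+x_2+x_3$ composed appropriately is Siggers), so it is \emph{not} covered by (a), (b), or (c); and that everything else — the clones with no Taylor term — is precisely (c). I would assemble this from Post's lattice diagram: enumerate the maximal clones, and for each clone $C$ above "no Taylor" check which of the three tractable buckets it falls into. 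Once that table is in hand the corollary is a direct translation, so I would present the proof as: (i) cite Schaefer/Post for the classical four-way split, (ii) one short paragraph per case invoking the relevant earlier theorem, (iii) a remark that the dual discriminator on $\{0,1\}$ coincides with majority, justifying the use of Theorem~\ref{thm:dd effective intro} in case (b).
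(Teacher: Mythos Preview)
Your proposal is correct and follows essentially the same approach as the paper: both use Post's classification of Boolean clones to obtain the four-way split (constant/$\vee$/$\wedge$; majority; contained in $\langle\neg\rangle$; affine), and then feed each case into the relevant Borel theorem (\ref{thm:width1 implies classical}, \ref{thm: dd effective}, \ref{thm:intractable}) exactly as you describe. The paper's proof is terser---it simply cites Post and records the four clone cases without the boundary bookkeeping you outline---but the content is the same; your remark that on $\{0,1\}$ the majority operation \emph{is} the dual discriminator is precisely what makes Theorem~\ref{thm:dd effective intro} applicable in case~(b), and the paper relies on this implicitly.
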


This leaves us with the burning question of projective complexity of linear algebra:

\begin{prb} \label{prb: lin alg}
Is $\csp_B(\bb F(3))$ $\bs12$-complete for every $\bb F$?
\end{prb}

If the answer to Questions \ref{prb: lin alg} and \ref{prb: bddwidth} are both positive,  then Conjecture \ref{cnj: dichotomy} is true, and further we get the following: 

\begin{cnj}\label{cnj: effective iff effective}
For any finite structure $\s D$, $\csp_B(\s D)$ is $\bp11$ if and only if it is effectivizable.
\end{cnj}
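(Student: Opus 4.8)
The plan is to prove the equivalence under the hypotheses the author actually invokes, namely affirmative answers to Problems~\ref{prb: lin alg} and~\ref{prb: bddwidth}. The implication ``effectivizable $\Rightarrow \bp11$'' is unconditional and easy; the reverse implication uses both problems, routed through the bounded width theorem, and in fact yields the full trichotomy behind Conjecture~\ref{cnj: dichotomy} as well.

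For ``effectivizable $\Rightarrow \bp11$'': observe first that $\csp_B(\s D)$ is always $\bs12$, since a Borel homomorphism is coded by a real $y$ and ``$y$ codes a Borel homomorphism into the finite structure $\s D$'' is $\Pi^1_1$ in the relevant parameters. Now suppose $\csp_B(\s D)$ is effectivizable; as the effectivizability proofs in this paper are Gandy--Harrington forcing arguments (cf.\ Corollary~\ref{cor:classical implies effective} and Theorem~\ref{thm:dd effective intro}), the statement relativizes: for any real $z$, a $\Delta^1_1(z)$ instance with a Borel solution has a $\Delta^1_1(z)$ solution. Given a code $x$ for a Borel instance $\s X$, so that $\s X$ is a $\Delta^1_1(x)$ instance, we get $\s X\in\csp_B(\s D)$ iff $\s X$ has a $\Delta^1_1(x)$ homomorphism into $\s D$; since $\s D$ is finite, this is an assertion ``$\exists y\in\Delta^1_1(x)\ \varphi(x,y)$'' with $\varphi$ arithmetical (or $\Pi^1_1$ under a coarser coding of Borel structures), so by the Spector--Gandy theorem it is $\Pi^1_1(x)$, uniformly in $x$. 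Hence $\csp_B(\s D)\in\bp11$.

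For the converse, I would prove the claim: \emph{if $\s D$ does not have bounded width, then $\csp_B(\s D)$ is $\bs12$-complete}. This suffices, because a $\bp11$ set cannot be $\bs12$-complete (a $\bs11$-complete set is $\bs12$ but not $\bp11$, and $\bp11$ is closed under continuous preimages), so $\csp_B(\s D)\in\bp11$ forces $\s D$ to have bounded width, whence $\csp_B(\s D)$ is effectivizable by the affirmative answer to Problem~\ref{prb: bddwidth}. To prove the claim, assume $\s D$ fails bounded width. If $\s D$ has no Siggers polymorphism, Theorem~\ref{thm:intractable intro} gives $\bs12$-completeness directly. Otherwise $\csp(\s D)$ is polynomial time solvable by Theorem~\ref{thm:CSP dichotomy theorem}, and by the bounded width theorem of Barto and Kozik --- passing to the core of $\s D$, which changes neither $\csp_B$ nor pp-constructibility --- $\s D$ pp-constructs the CSP of linear equations over $\Z/p\Z$ for some prime $p$, i.e.\ pp-constructs $\bb F_p(3)$. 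A pp-construction yields a Borel many--one reduction of $\csp_B(\bb F_p(3))$ to $\csp_B(\s D)$: the homomorphic-equivalence part is immediate from composing the two finite homomorphisms, and for a pp-power one unpacks each constraint according to its pp-definition and chooses values for the new existential variables by a Borel selector, which exists by Luzin--Novikov since these variables range over finite sets. Since $\csp_B(\bb F_p(3))$ is $\bs12$-complete by the affirmative answer to Problem~\ref{prb: lin alg} and $\csp_B(\s D)$ is always $\bs12$, we conclude $\csp_B(\s D)$ is $\bs12$-complete. (The resulting trichotomy --- $\bp11$ and effectivizable when $\s D$ has bounded width, $\bs12$-complete otherwise --- also proves Conjecture~\ref{cnj: dichotomy}.)

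The delicate point is the Borel reduction in the affine case, together with the relativization used in the easy direction. For the reduction I must verify that unpacking a pp-power of $\s D$ genuinely produces a Borel $\s D$-instance and that the selector for the existential variables can be taken Borel; this is precisely the place where, in the formulation of Theorem~\ref{thm:equality closure intro}, the equality-in-the-signature hypothesis intervenes, and the Luzin--Novikov argument above is meant to sidestep it, but one should confirm it meshes with the coding of Borel structures fixed in Appendix~\ref{appendix:codings} (and, if one wants the reduction to also transfer effectivizability, that the selector can be taken $\Delta^1_1$). For the easy direction I am using that effectivizability is established relative to an arbitrary real, which is automatic for Gandy--Harrington proofs but needs to be stated, since Problem~\ref{prb: bddwidth} asks only the lightface question while the equivalence is boldface. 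I expect the Borel reduction and its selector to be the main obstacle; the Spector--Gandy half should be essentially routine.
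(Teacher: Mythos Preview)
This statement is a conjecture; the paper establishes only the conditional version, namely the unlabeled Proposition at the end of Section~\ref{section:effective}, under positive answers to Problems~\ref{prb: lin alg} and~\ref{prb: bddwidth}. Your overall architecture matches the paper's: show that failure of bounded width forces $\bs12$-completeness, so that $\bp11$ implies bounded width, whence effectivizability by hypothesis.

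The substantive difference is in the Borel reduction from $\csp_B(\bb F(3))$ to $\csp_B(\s D)$ when $\s D$ fails bounded width. You route through pp-construction and try to argue directly that it yields a Borel reduction, handling existential witnesses with Luzin--Novikov. But Luzin--Novikov only deals with the auxiliary $\bar z$-variables in the construction of $F(\s X)$; the genuine obstruction---which you correctly flag as ``delicate'' but do not resolve---is equality atoms appearing in the pp-formulas, which force identifications among the $\bar y_x$-variables and can produce a non-smooth quotient of the variable set of $F(\s X)$. The paper sidesteps this entirely with the machinery of Section~\ref{section:simple}: Theorem~\ref{thm:bddwidth characterization} gives $\pol(\bb F)\in\HS(\pol(\s D))$ (not merely $\HSP$), so Corollary~\ref{cor:not bdd width} upgrades this to a \emph{simple} construction of $\bb F(3)$ from $\s D$, and then Theorem~\ref{thm:simple reduction} delivers the Borel reduction with no equality hypothesis. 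That is exactly the missing ingredient in your argument.

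Your treatment of the forward direction is more careful than the paper's, which just writes ``$(1)\Rightarrow(2)$ is clear''. Your Spector--Gandy unpacking is correct, and your point that one needs effectivizability \emph{relative to an arbitrary real}---not only the lightface statement asked in Problem~\ref{prb: bddwidth}---to obtain a boldface $\bp11$ bound is a real subtlety the paper does not make explicit; it holds because the arguments in Sections~\ref{section:classical} and~\ref{section:effective} relativize.
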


\subsection{Acknowledgements}

Thanks to Zoltan Vidny\'anszky and Jan Greb\'ik for helpful conversations, and thanks to Tyler Arant for explaining the coding scheme given in the appendix. The author was partially supported by NSF grant DMS-1764174.

\subsection{Outline of paper}

In Section \ref{section:background}, we review some background on CSPs. Specifically, we make clear the notion of pp constructibility and its relation with the polymorphism algebra. We also describe a number of classes of finitary CSPs relevant to this paper. In Section \ref{section:simple}, we introduce a weakening of pp constructions we call simple constructions which can be employed in the Borel setting, and we establish some combinatorial lemmas relating pp and simple constructions. In Section \ref{section:intractable}, we prove Theorem \ref{thm:intractable intro} and establish some of its consequences. In Sections \ref{section:classical} and \ref{section:effective} we prove Theorems \ref{thm: classical intro} and \ref{thm:dd effective intro} respectively and record some corollaries. And in Appendix \ref{appendix:codings}, we prove $\bs12$-completeness of Borel edge 3-coloring and give some details on Borel codes.

\subsection{Notation and convention}
A relation on a set $A$ of arity $k$ is a subset of $A^k$. We will write $R(x_1,..,x_k)$ to mean $(x_1,...,x_k)\in R$. We will switch freely between these two notations. In particular a unary predicate is just a subset of $A$. Some relations, like equality, are typically written in infix notation. We will write these in parenthesis to emphasize that we mean the associated set. For instance, $(=)=\{(a,b)\in A^2: a=b\}$.

By a relational structure we mean a tuple $\s D=(D, R_1,R_2,...)$, where $D$ is a set and each $R_i$ is a relation on $D$. Relational structures will be denoted throughout by script letters: $\s D, \s E, \s X$, etc. Unless otherwise stated, the domain of a structure will be denoted by the unscripted letter: $D, E, X$, etc.

We say that a structure is finite to mean that its domain is finite and that it comes equipped with only finitely many non-empty relations. When $\s D$ is a finite relational structure, we say that $\s X$ is an instance of $\s D$ when it has the same signature. That is, if $\s D=(D, R_1,...,R_n)$, then $\s X=(X, S_1,..., S_n)$ where $S_i$ has the same arity as $R_i$. We refer to $S_i$ as the interpretation of $R_i$ in $\s X$ and write $S_i=R_i^{\s X}$. The superscript may be dropped if the context is clear. If $\s X$ is an instance of $\s D$, we say $\s D$ is a template of $\s X$.

We will want to perform a few operations on structures. For $\s D=(D, R_1,...,R_n)$ a structure and $R$ a relation on $D$, we abuse notation and write $\s D\cup\{R\}$ for $(D, R_1,...,R_n, R).$ For $A\subseteq D$ we write $\s D\res A$ for the structure with domain $A$ equipped with relations $R_i\cap A^k$ for each $i$ (where $R_i$ has arity $k$.) And, if $R$ and $S$ are relations both of arity $k$ on $A$ and $B$ respectively, then $R\sqcup S$ is the relation on $A\sqcup B$ defined by
\[R\sqcup S=\{e\in A^k\sqcup B^k:R(e)\mbox{ or }S(e)\} \]

For a set $A$, $\pi^k_i: A^k\rightarrow A$ is the $i^{th}$ projection map
\[\pi^k_i(a_1,...,a_k)=a_i.\]
And for $J=\{j_1,...,j_n\}\subseteq \{1,...,k\}$, $\pi_J^k$ is the projection onto the coordinates in $J$:
\[\pi^k_J(a_1,...,a_k)=(a_{j_1},...,a_{j_n}).\] We will drop the superscript when the context is clear.

Our infinite structures will all have domain $\baire$ (or a subspace of $\baire$). By standard universality arguments, our results extend to any Polish space. Most reasonable codings of Borel sets will work for our arguments. See Appendix \ref{appendix:codings} for some details.

Many results in this paper refer to computational complexity classes. Almost anywhere P (or NP) appears it can replaced with the algebraic notion of tractability (or intractability). See Definition \ref{dfn: tractable} and the following comments. In particular most theorems in this paper are true even if P$=$NP. We will point out any exceptions by putting ``(P$\not=$NP)" before any theorems which require this assumption.

\section{Background on CSPs} \label{section:background}

In this section, we review some of the basic algebraic theory of CSPs. The material we cover here is minimal. For a more detailed survey, see \cite{PolymorphismSurvey}.

\begin{dfn}\label{dfn: polymorphisms}
For two relational structures in the same language, $\s X=(X,\tau)$ and $(\s D,\tau)$, a \textbf{homomorphism} from $\s X$ to $\s D$ is a function $f:X\rightarrow D$ so that
\[(\forall R\in \tau, x_1,...,x_n\in X) \;(x_1,...,x_n)\in R^{\s X}\rightarrow (f(x_1),...,f(x_n))\in R^{\s D}.\]

A \textbf{polymorphism} of $\s X$ is a homomorphism from $\s X^n$ to $\s X$, where we take the so-called categorical product. That is, we interpret a $k$-ary relation $R$ in the product structure as $R^{\s X^n}=(R^{\s X})^n$:
\[R^{\s X^n}(\ip{x_{i,j}: i\leq n, j\leq k}):\lra (\forall i)\; R^{\s X}(x_{i,1},...,x_{i,k}).\]

For any structure $\s D$, $\pol(\s D)$ is the algebra of polymorphisms of $\s D$.

A \textbf{clone} is an algebra equipped with every projection operation and whose collection of operations is closed under compositions. For a set of operations $A$, $\ip{A}$ is the smallest clone whose operations contain $A$. We say $A$ \textbf{generates} $\ip{A}$.
\end{dfn}

The following examples are straightforward to verify:
\begin{enumerate}
    \item For any structure $\s D$, $\pol(\s D)$ is a clone
    
    \item For any finite field $\s F$, $\pol(\bb F(3))$ is the collection of linear functionals $f(x_1,...,x_n)=\sum_i a_i x_i$ with $\sum_i a_i=1$
    
    \item The dual discriminator on $\{0,1\}$, also called the majority function or $\maj$, generates $\pol($2SAT$)$.
    
    \item $\horn$ is satisfaction problem for Horn sentences. More specifically, $\horn$ has domain $\{0,1\}$ and includes the unary predicates $\{0\}$ and $\{1\}$ and each relation of the form 
    \[R(x_1,...,x_n,y)\lra (x_1\wedge ...\wedge x_n)\rightarrow y.\] The binary min, $\wedge$, generates $\pol(\horn)$
    
    \item $\nae$ is the 2-coloring problem for ternary hypergraphs, i.e.
    \[\nae=(\{0,1\}, \{(x_1,x_2,x_3): \neg(x_1=x_2=x_3)\}).\] Negation generates $\pol(\nae)$.
    
    \item For the directed 3-cycle $\s D= (\{r,p,s\}, \{(p,r),(r,s), (s,p)\})$, $f\in \pol(\s D)$ if and only if the cyclic permutation $\pi=(rps)$ is an automorphism of $f$. For instance, $\pol(\s D)$ contains $\pi$, the dual discriminator and the binary rock-paper-scissors operation, $(\star)$:
    \[\begin{array}{c| ccc} \star & r & p & s \\ \hline r & r & p & r \\ p & p & p & s \\ s & r & s & s \end{array}\]
    
\end{enumerate}

Note that $\horn$ is not finite, but it will turn out that it is pp definable in a finite number of its relations. For item (3), note that for any $3\times 2$ matrix over $\bb F_2$, applying the majority function to each column returns one of the rows. So, the majority function preserves all binary relations on $\{0,1\}$.

\begin{dfn}\label{dfn: core and hom equiv}

We say $\s X$ is a \textbf{core} if any homomorphism $f:\s X\rightarrow \s X$ is an automorphism (i.e. bijective).

Two structures $\s D$ and $\s E$ are \textbf{homomorphically equivalent} if there are homomorphisms $f:\s D\rightarrow \s E$ and $g: \s E\rightarrow \s D$.

Any structure $\s D$ is homomorphically equivalent to a (unique up to isomorphism) core, which we refer to as the \textbf{core of $\s D$}.
\end{dfn}

Note that if $\s D$ and $\s E$ are homomorphically equivalent, then $\csp(\s D)=\csp(\s E)$. For example, $K_n$ is a core for all $n$, and the core of any bipartite graph is $K_2$.

\begin{dfn} \label{dfn: pp defns}
If $\s D$ and $\s E$ are two structures with the same domain, we say $\s E$ is \textbf{pp definable} in $\s D$ if every relation $R$ of $\s E$ can be written as a positive primitive formula over $\s D$, i.e.
\[R(x_1,...,x_n)\lra (\exists y_1,...,y_k) \bigwedge_i \alpha_i(x_1,...,x_n, y_1,...,y_k)\] where each $\alpha_i$ either asserts a relation from $\s D$ or an equality.

We say $\s E$ is \textbf{pp interpretable} in $\s D$ if it is interpretable in the model-theoretic sense with positive primitive formula, i.e. $\s E$ is a quotient of a pp definable structure of some power of $\s D$ along a pp definable equivalence relation.

And, we say $\s E$ is \textbf{pp constructible} in $\s D$ if there is a chain of structures $\s D=\s E_1, \s E_2,..., \s E_n=\s E$ so that for each $i$, one of the following holds:
\begin{enumerate}
    \item $\s E_{i+1}$ is pp interpretable in $\s E_i$
    \item $\s E_i$ is homomorphically equivalent to $\s E_{i+1}$
    \item $\s E_i$ is a core and $\s E_{i+1}$ is $\s E_i$ expanded by a singleton unary relation $U(x)\lra x=a$
\end{enumerate}
\end{dfn}

For example, given any $a_1,a_2,a_3,a_4,b\in \bb F$
\[a_1x_1+a_2x_2+a_3x_3+a_4x_4=b\] if and only if
\[(\exists y_1,y_2) \; a_1x_1+a_2x_2=y_1 \wedge a_3x_3+y_1=y_2\wedge a_4x_4+y_2=a.\] So $\bb F(3)$ pp constructs $\bb F(4)$ for any finite field $\bb F$. A similar trick shows that $\bb F(3)$ pp constructs every $\bb F(k)$ and that $\horn$ is pp definable in a finite number of its relations.

Since every Boolean formula can be converted to a formula in 3CNF by introducing new variables as above, 3SAT pp defines every structure on $\{0,1\}$. Further, by considering binary expansions of a general relation, one can show that 3SAT pp interprets every structure. Because $\nae$ has a nontrivial automorphism, it does not pp define 3SAT, but one can check that it pp constructs 3SAT (and thus all structures).

The basic idea of the algebraic approach to CSPs is that pp constructions preserve all complexity-theoretic information about a CSP, and that pp constructibility of structures is captured by the (height 1) varieties of their polymorphism algebras. For completeness, we will explain this correspondence in general, though we will only refer to a handful of well-understood varieties in this work.

\begin{dfn}\label{dfn: varieties}
For an algebra $\bb A$, an \textbf{identity} is a true statement of the form
\[(\forall x_1,...,x_n, y_1,...,y_m) \;\tau(x_1,...,x_n)=\sigma(y_1,...,y_m)\]
where $\sigma$ and $\tau$ are compositions of operations from $\bb A$. An identity is \textbf{height 1} if each of $\sigma$ and $\tau$ contain exactly one function symbol (without repetition). As is somewhat standard convention, we will often leave the universal quantifier implicit when defining identities.

The \textbf{variety} generated by $\bb A$, $\var(\bb A)$, is the class of all algebras (in the same signature) which satisfy the same identities as $\bb A$. The \textbf{h1 variety} generated by $\bb A$, $\var_{h1}(\bb A)$ is the class of algebras which satisfy the same height 1 identities.
\end{dfn}

The identities defining Siggers and totally symmetric polymorphisms are height 1, but associativity and idempotence are not. The variety generated by $\pol(3\sat)$ is the class of all projection algebras. On the other hand, the majority function satisfies \[\maj(x,x,y)=\maj(x,y,x)=\maj(y,x,x)\] so $\pol(2\sat)$ and $\pol(3\sat)$ generate different h1 varieties. 

We will need a more detailed analysis of $\var(\bb A)$.

\begin{dfn}\label{dfn:HSP}
 We say that $\bb B$ is a \textbf{reduct} of $\bb A$ if $\bb B$ has the same domain as $\bb A$ and is equipped with a subset of the operations from $\bb A$. In this case, we write $\bb B\subseteq \bb A$. 

We say that $\bb B$ is a \textbf{subalgebra} of $\bb A$ if the domain of $\bb B$ is a a subset of the domain of $\bb A$, and $\bb B$ comes equipped with all of the restrictions of operations from $\bb A$.

For $C$ a class of algebras, we define: 
\[H(C):=\{f(\bb A):\bb A\in C, f\mbox{ a homomorphism}\}\]
\[S(C):=\{\bb B: (\exists \bb A\in C)\; \bb B\mbox{ a subalgebra of }\bb A\}\]
\[P(C):=\{\bb A^\kappa: \bb A\in C, \kappa\mbox{ is a cardinal}\}\]
We write $H(\bb A)$ for $H(\{\bb A\})$ and likewise for $S(\bb A)$ and $P(\bb A)$.
\end{dfn}

The operations $H$ and $S$ will be important for us later. 

\begin{thm}\label{thm:correspondences}
For finite relational structures $\s D$ and $\s E$
\begin{enumerate}
    \item $\s D$ pp defines $\s E$ iff $\pol(\s E)\subseteq \pol(\s D)$
    \item $\s D$ pp interprets $\s E$ iff a reduct of $\pol(\s E)$ is in $\var(\pol(\s D))=\HSP(\pol(\s D))$ iff every identity satisfied by elements of $\pol(\s D)$ is satisfied by elements of $\pol(\s E)$
    \item $\s D$ pp constructs $\s E$ iff a reduct of $\pol(\s E)$ is in $\var_{h1}(\pol(\s D))$ iff every height 1 identity satisfied by elements of $\pol(\s D)$ is satisfied by elements of $\pol(\s E)$.
\end{enumerate}
\end{thm}
\begin{proof}
Statement (1) was proven independently in 60s by Geiger and  Bodnarchuk, Kaluzhnin, Kotov, and Romov \cite{GaloisOG1}\cite{GaloisOG2}. The fact that $\var(\bb A)=\HSP(\bb A)$ is Birkhoff's HSP theorem \cite{Birkhoff}. The equivalence to pp interpetation is essentially by definition and is implicit in Bulatov, Jeavons, and Krokhin \cite{BulatovJeavonsKrokhin}. Statement (3) is due to Barto, Kozik, and Pinsker \cite{reflections}.
\end{proof}

In fact slightly more is true of item (2). For $\bb A=\pol(\s D)$, the polymorphism algebras in $H(\bb A)$, $S(\bb A)$, and $P(\bb A)$ correspond to pp definable quotients, pp definable substructures, and powers of $\s D$ respectively. 

\begin{cor}
For structures $\s D$ and $\s E$, $\s E$ is a pp definable quotient of a pp definable substructure of $\s D$ if and only if $\pol(\s E) \in \HS(\pol(\s D))$.
\end{cor}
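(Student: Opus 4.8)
The plan is to combine the two parts of Theorem \ref{thm:correspondences}, or more precisely to refine the proof of its part (2), so as to match the finer structure $\HS$ on the algebra side with the ``no powers'' version of pp interpretability on the structure side. The key observation to isolate at the outset is the parenthetical remark following Theorem \ref{thm:correspondences}: for $\bb A = \pol(\s D)$, the algebras in $H(\bb A)$, $S(\bb A)$, and $P(\bb A)$ correspond respectively to pp definable quotients of $\s D$, pp definable substructures of $\s D$, and powers of $\s D$. So a natural route is to say: $\HS(\pol(\s D))$ consists (up to reducts) exactly of the polymorphism algebras of structures obtained from $\s D$ by first passing to a pp definable substructure and then to a pp definable quotient, which is precisely the assertion to be proved.

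Here is how I would carry it out. First I would fix $\s D$ and write $\bb A = \pol(\s D)$. For the forward direction, suppose $\s E$ is a pp definable quotient of a pp definable substructure $\s D'$ of $\s D$. Since $\s D'$ is pp definable in $\s D$ on a subset $D' \subseteq D$, one checks directly that $\pol(\s D')$ is (a reduct of) a subalgebra of $\bb A$: indeed a polymorphism of $\s D$ need not restrict to $D'$, so the honest statement is that $\pol(\s D \cup \{U_{D'}\})$ — adjoining the unary predicate $D'$, which is pp definable — has $D'$ as a subalgebra, and its restriction to $D'$ is $\pol(\s D')$; this places $\pol(\s D') \in S(\bb A)$ after the standard ``unary predicate'' bookkeeping. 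Then, since $\s E$ is a pp definable quotient of $\s D'$ along a pp definable congruence $\theta$, the quotient map $\s D' \to \s E$ witnesses that (a reduct of) $\pol(\s E)$ is a homomorphic image of $\pol(\s D')$ — each polymorphism of $\s D'$ descends to the $\theta$-classes because $\theta$ is pp definable hence preserved — so $\pol(\s E) \in H(\pol(\s D')) \subseteq HS(\bb A)$. For the reverse direction, given $\bb B \in HS(\bb A)$, write $\bb B = f(\bb C)$ for $\bb C$ a subalgebra of $\bb A$ and $f$ a homomorphism; use the substructure correspondence to produce a pp definable substructure $\s D'$ of $\s D$ with $\pol(\s D') = \bb C$ (again modulo the unary predicate expansion), and then use the quotient correspondence to produce a pp definable congruence on $\s D'$ whose quotient $\s E$ has $\pol(\s E) = \bb B$; this $\s E$ is then by construction a pp definable quotient of a pp definable substructure of $\s D$, and one checks (as in part (2) of Theorem \ref{thm:correspondences}, minus the power step) that $\csp$-relevant structure is preserved.

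The main obstacle I anticipate is the bookkeeping around substructures versus unary-predicate expansions: polymorphisms of $\s D$ do not in general restrict to an arbitrary subset, so the clean statement ``$\pol(\s D') \in S(\pol(\s D))$'' requires either first expanding $\s D$ by the (pp definable) unary predicate naming $D'$, or equivalently working with the fact — implicit in the correspondence theorem and the comment after it — that $S$ on the algebra side is set up precisely to track this. I would make sure the statement being proved is read with the convention that ``substructure'' already allows restricting to a pp definable domain (which is what makes $S$ the right operation), and then the argument reduces to composing the two halves of the known correspondence. A secondary, more routine point is to check that the order of operations matches: $\HS$ rather than $\SH$, i.e. that every homomorphic image of a subalgebra can also be realized by first taking a (possibly different) subalgebra and then a quotient — but this is exactly what $\HS$ already encodes and the structure side mirrors it, so no extra work is needed beyond stating it carefully.
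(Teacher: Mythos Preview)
Your outline is correct and matches the paper's approach: both directions ultimately reduce to Theorem~\ref{thm:correspondences}(1) (a relation is pp definable iff it is invariant under $\pol(\s D)$). Your worry about unary-predicate expansions is unnecessary---a subalgebra of $\pol(\s D)$ is by definition a subset of $D$ closed under every polymorphism, hence already a pp definable unary relation by that theorem, so polymorphisms restrict automatically and no expansion is needed; the paper dispatches this in one line. The only place the paper does explicit work is the reverse-direction quotient step you invoke as a black box: given an onto algebra homomorphism $\phi$, it verifies directly that the kernel is $\pol(\s D)$-invariant (since $\phi(f(\bar x))=f(\phi(\bar x))=f(\phi(\bar y))=\phi(f(\bar y))$ whenever $\phi(\bar x)=\phi(\bar y)$), hence pp definable, and similarly for the $\phi$-pullback of each relation of $\s E$---note that citing the parenthetical remark after Theorem~\ref{thm:correspondences} for this is circular, as that remark is precisely what the corollary formalizes.
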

\begin{proof}
We will show that any $\s E$ with $\pol(\s E)\in \HS(\pol(\s D))$ is a quotient of a substructure. The converse is straightforward.

Fist note that a subalgebra of $\pol(\s D)$ is a subset of $D$ which is closed under the operations in $\pol(\s D)$. By Theorem \ref{thm:correspondences}, this is the same as a pp definable subset of $D$. 

Now suppose that $\phi:D\rightarrow E$ is a homomorphism of algebras from $\pol(\s D)$ onto $\pol(\s E)$, and extend $\phi$ to a homomorphism $\phi:D^n\rightarrow E^n$. For any operation $f\in \pol(\s D)$, if $\phi(x_1,...,x_n)=\phi(y_1,...,y_n)$ then
\begin{align*} \phi(f(x_1,...,x_n))& =f(\phi(x_1,...,x_n)) \\ \ & =f(\phi(y_1,...,y_n))\\
& =\phi(f(y_1,...,y_n))\end{align*} So the kernel of $\phi$, i.e. the equivalence relation $\phi(x)=\phi(y)$, is invariant under $\pol(\s D)$ and is pp definable in $\s D$. Similarly, the pullback of any relation in $\s E$ is pp definable.
\end{proof}

As a corollary of Theorem \ref{thm:correspondences}, any class of structures which is closed under pp constructions (or interpretations or definitions) admits an algebraic description in terms of height 1 identities (or identities or polymorphisms). We survey several such classes and their algebraic descriptions below.

Bulatov, Jeavons, and Krokhin showed that if $\s D$ pp constructs $\s E$ then $\csp(\s E)$ is polynomial time reducible to $\csp(\s D)$ \cite{BulatovJeavonsKrokhin}. The CSP dichotomy theorem gives the corresponding algebraic characterization of the polynomial time complexity classes for CSPs. We collect a few equivalent characterizations here.

\begin{dfn}\label{dfn: tractable}
If $\s D$ admits a Siggers term, we say $\s D$ is tractable. We say $\s D$ is intractable otherwise.
\end{dfn}

So the CSP dichotomy theorem says that, if P$\not=$NP, then intractable is synonymous with NP-complete and tractable is synonymous with polynomial time solvable.

\begin{thm}\label{thm:intractable equivalents}
$\s D$ is tractable iff any of the following hold:
\begin{enumerate}
    \item $\pol(\s D)$ contains a Taylor operation, i.e. an operation $T$ so that for every $i$ there are $z_1,...,z_n\in \{x,y\}$ so that $T$ satisfies an identity of the form
    \[T(z_1,z_2,...,z_{i-1},x,z_{i+1},...,z_n)=T(z_1,z_2,...,z_{i-1},y,z_{i+1},...,z_n).\]
    \item There is a weak near unanimity (or WNU) polymorphism $W\in \pol(\s D)$, i.e. an operation satisfying 
    \[W(y,x,x,...,x)=W(x,y,x,...,x)=W(x,x,y,...,x)=...=W(x,x,x,...,y)\]
    \item For all but finitely many primes $p$, $\s D$ has cyclic polymorphism of arity p, i.e. there is $C\in\pol(\s D)$ satisfying
    \[C(x_1,x_2,x_3,....,x_{p-1},x_p)=C(x_2,x_3,x_4,...,x_p,x_1)\]
    \item $\s D$ does not pp constructs every finite relational structure
\end{enumerate}
\end{thm}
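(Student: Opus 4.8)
The plan is to prove the cycle of equivalences (tractable $\Leftrightarrow$ (1) $\Leftrightarrow$ (2) $\Leftrightarrow$ (3) $\Leftrightarrow$ (4)) by combining the Galois correspondence of Theorem \ref{thm:correspondences}(3) with standard clone-theoretic manipulations, plus one nontrivial input from the universal algebra literature. First I would observe that the Siggers identity is height 1, as are the WNU, cyclic, and Taylor identities, so by Theorem \ref{thm:correspondences}(3) each of the conditions (Siggers), (1), (2), (3) is equivalent to a statement about which height 1 identities $\pol(\s D)$ satisfies, i.e.\ about $\var_{h1}(\pol(\s D))$; and condition (4) says precisely that $\var_{h1}(\pol(\s D))$ is \emph{not} the h1 variety of projections (since $3\sat$ pp constructs every finite structure and $\pol(3\sat)$ is the projection clone). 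So the whole theorem reduces to: for an idempotent-free finite algebra $\bb A$ of the form $\pol(\s D)$, the following are equivalent — $\bb A$ admits a Taylor term, a WNU term, a Siggers term, cyclic terms of almost all prime arities, and $\var_{h1}(\bb A)$ omits the two-element projection algebra.

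The key steps, in order. (a) (4) $\Leftrightarrow$ "Taylor": a clone either satisfies no nontrivial height 1 identity at all (the projection clone, which by (4) and the $3\sat$ remark is exactly the "pp-constructs-everything" case) or it satisfies a nontrivial one; and having a Taylor term is the standard syntactic normal form for "satisfies some nontrivial idempotent system of height 1 identities." Here I would use that $\pol(\s D)$ for $\s D$ a core can be taken idempotent (after the pp-construction step (3) in Definition \ref{dfn: pp defns} that adjoins constants, one passes to the idempotent reduct without changing the h1 variety's triviality), so the Taylor/WNU/cyclic/Siggers conditions can all be assumed in their idempotent form. (b) "Taylor" $\Rightarrow$ "cyclic of almost all prime arities": this is the theorem of Barto--Kozik (building on Bulatov--Jeavons--Krokhin, and the loop lemma) that an idempotent Taylor algebra has cyclic terms of every prime arity $p$ strictly larger than the size of the algebra — I would cite this; it is the one genuinely hard import. (c) "cyclic of arity $p$" $\Rightarrow$ "WNU": a $p$-ary cyclic term is in particular a $p$-ary WNU term (identifying all but one variable in a cyclic identity immediately gives the WNU identities). (d) "WNU" $\Rightarrow$ "Siggers": given a $k$-ary WNU term $W$ one builds a Siggers term by a fixed substitution — e.g.\ Siggers' original $6$-ary construction, or the reduction to a $4$-ary Siggers term of Kearnes--Markovi\'c--McKenzie — which is a routine (if clever) composition of $W$ with projections and does not need finiteness beyond idempotence. (e) "Siggers" $\Rightarrow$ "Taylor": a Siggers term is itself a Taylor term (for the $4$-ary Siggers identity $s(r,a,r,e)=s(a,r,e,a)$, each coordinate $i$ is "moved" since the two sides differ in position $i$ for an appropriate choice among $r,a,e$), closing the cycle. (f) Finally "Taylor" $\Rightarrow$ (4): a projection clone has no Taylor term (any projection $\pi_j$ fails the Taylor identity in coordinate $j$), so if $\pol(\s D)$ had no Taylor term then $\var_{h1}(\pol(\s D))$ contains the projection clone, whence by Theorem \ref{thm:correspondences}(3) $\s D$ pp constructs every structure pp constructed by $3\sat$, i.e.\ everything.

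The main obstacle is step (b): the Barto--Kozik cyclic-terms theorem is the deep content, and everything else is bookkeeping with the Galois correspondence and explicit term manipulations. I would not reprove it; I would state it as a black box with a citation, the same way the excerpt treats the CSP Dichotomy Theorem itself. A secondary subtlety worth flagging is the reduction to the idempotent case: one must check that adding constants for a core (Definition \ref{dfn: pp defns}(3)) is compatible with all four conditions, which is standard but deserves a sentence, since the cyclic-term theorem is only available for idempotent algebras.
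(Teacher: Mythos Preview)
Your proposal is correct and matches the paper's approach: both treat the theorem as a survey of literature citations rather than something to be proved from scratch, and you correctly single out the Barto--Kozik cyclic-terms theorem as the deep ingredient. The paper is even terser --- it simply cites Taylor for (1)$\Leftrightarrow$(4), Mar\'oti--McKenzie for the WNU equivalence, Barto--Kozik for cyclic, and Siggers/Kearnes--Markovi\'c--McKenzie for Siggers --- whereas you spell out the cycle and the trivial implications (cyclic $\Rightarrow$ WNU, Siggers $\Rightarrow$ Taylor).

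One point worth tightening: your step (a) presents ``Taylor term $\Leftrightarrow$ satisfies some nontrivial idempotent h1 identity system'' as a routine normal-form observation, but this \emph{is} Taylor's theorem and is not trivial; the paper explicitly attributes it to \cite{Taylor}. Likewise your step (d) (WNU $\Rightarrow$ Siggers) is not literally a ``composition of $W$ with projections'' --- the Siggers and KMM arguments go via Taylor and are somewhat more involved --- though since you cite those papers anyway this is harmless. Your remark about the idempotent reduction is a genuine subtlety the paper leaves implicit in its citations.
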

\begin{proof}
Taylor proved an idempotent algebra $\bb A$ admits Taylor term if and only if $\var(\bb A)$ does not contain a projection algebra \cite{Taylor}. Since the Taylor identites are height 1, this implies $\pol(\s D)$ does not contain a Taylor term if and only if $\s D$ pp constructs every finite relational structure. The equivalence of Taylor terms and WNU terms is shown in \cite{WNU}, the equivalence with cyclic terms in \cite{Cyclic}, and the equivalence with Siggers terms in \cite{SiggersOG} and \cite{SiggersSharper}.
\end{proof}

Historically, the Taylor identities were the first to appear in the literature. In the 1970s, Motivated by questions in algebraic topology, Taylor showed that these identities characterize idempotent varieties which omit projection algebras. Bulatov, Jeavons, and Krokhin originally stated the algebraic CSP dichotomy conjecture in terms of Taylor identities, and many of the equivalent forms above were motivated by computational complexity questions. We include WNU operations here as they seem to show up most often in the literature.

The second class of structures we look at is described by a simple constraint propagation algorithm. Given an instance $\s X$, assign to each variable $x\in \s X$ a unary constraint $U_x$, initialized to $\s D$. This $U_x$ will represent to possible values $x$ can take. If we ever see that $R(x,y_1,...,y_n)$, but that $d\in U_x$ cannot be matched to elements of the $U_{y_i}$s to get an element of $R$, then remove $d$ from $U_x$. Repeat until this each $U_x$ stabilizes. If any $U_x$ is empty there is no solution. We say that a problem is width 1 if every instance with each $U_x$ nonempty has a solution. More formally:

\begin{dfn}\label{dfn: arc consistent}
Say that an instance $\s X$ of $\s D$ is \textbf{arc-consistent} if there are unary predicates $U_x\subseteq\s D$ for $x\in \s X$ which are pp definable in $\s D$ so that, if $(x_1,...,x_n)\in R^{\s X}$, then \[\pi_i(U_{x_1}\times...\times U_{x_n}\cap R^{\s D})=U_{x_i}.\]

A structure is \textbf{width 1} if every arc-consistent instance has a solution.
\end{dfn}

Width 1 structures will play an important role in Section \ref{section:classical}. An example of a width 1 structure is $\horn$. The arc consistency algorithm in this case amounts to classical unit propagation. A Theorem of Dalmau and Pearson algebraically characterizes width 1 structures.

\begin{thm}[\cite{Width1}] \label{thm:width1 characterization}
A structure $\s D$ is width 1 if and only if it admits a totally symmetric polymorphism of arbitrarily high arities.
\end{thm}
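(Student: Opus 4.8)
The plan is to prove the two implications directly; the forward direction, width 1 $\Rightarrow$ totally symmetric polymorphisms of every arity, is where the content lies, and the converse is a gluing argument. I will start with the converse. Note first that identifying variables turns a totally symmetric polymorphism of arity $M$ into one of each arity $\leq M$, so the hypothesis supplies totally symmetric polymorphisms of every arity. Let $\s X$ be arc-consistent, witnessed by pp-definable predicates $U_x$, which we may assume nonempty. For every constraint $R^{\s X}(x_1,\dots,x_k)$, every coordinate $j$, and every $d\in U_{x_j}$, arc-consistency yields a tuple in $R^{\s D}$ whose $j$-th entry is $d$ and each of whose entries lies in the corresponding $U_{x_\ell}$. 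Fix $N$ at least $\max_x|U_x|$ and at least $\sum_{\ell\leq k}|U_{x_\ell}|$ over all constraints, and let $t$ be a totally symmetric polymorphism of arity $N$. Define $h(x)$ to be $t$ applied to any $N$-tuple with underlying set $U_x$; this is well-defined by symmetry. To check $h$ is a homomorphism, fix a constraint, list the witness tuples ranging over its coordinates and the relevant values (at most $N$ of them, all in $R^{\s D}$, with entries in each coordinate $j$ ranging over exactly $U_{x_j}$), pad the list to length $N$ by repetition, and apply $t$ coordinatewise: the result lies in $R^{\s D}$ and its $j$-th entry is $t$ of a tuple with underlying set $U_{x_j}$, that is, $h(x_j)$.

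For the forward direction, fix $m$; I would build an instance $\s Q_m$ of $\s D$ whose solutions are exactly the totally symmetric polymorphisms of $\s D$ of arity $m$, and then show $\s Q_m$ is arc-consistent so that width 1 forces a solution. Let the domain of $\s Q_m$ be the nonempty subsets $S\subseteq D$ with $|S|\leq m$, and for a relation $R$ of $\s D$ of arity $k$ declare $(S_1,\dots,S_k)\in R^{\s Q_m}$ iff there are $r^{(1)},\dots,r^{(m)}\in R^{\s D}$ with $\{r^{(1)}_j,\dots,r^{(m)}_j\}=S_j$ for each $j$; equivalently, the $S_j$ are the sets of entries of the columns of an $m\times k$ matrix over $R^{\s D}$. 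Given a homomorphism $f:\s Q_m\to\s D$, put $F(d_1,\dots,d_m):=f(\{d_1,\dots,d_m\})$. Then $F$ is totally symmetric by construction, and it is a polymorphism: if $\vec a^{(1)},\dots,\vec a^{(k)}\in D^m$ satisfy $(a^{(1)}_i,\dots,a^{(k)}_i)\in R^{\s D}$ for every $i\leq m$, those $m$ rows witness $(S_1,\dots,S_k)\in R^{\s Q_m}$ for $S_j=\{a^{(j)}_i:i\leq m\}$, so $(F(\vec a^{(1)}),\dots,F(\vec a^{(k)}))=(f(S_1),\dots,f(S_k))\in R^{\s D}$. Conversely a totally symmetric polymorphism of arity $m$ gives a solution by the same dictionary.

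It remains to verify $\s Q_m$ is arc-consistent. The candidate predicates are $U_S:=S$, and the definition of $R^{\s Q_m}$ gives at once that $\pi_i\big((S_1\times\dots\times S_k)\cap R^{\s D}\big)=S_i$ for every constraint $(S_1,\dots,S_k)\in R^{\s Q_m}$ and every $i$: ``$\subseteq$'' is automatic and ``$\supseteq$'' uses the witnessing tuples. The one subtlety is that $S$ need not be pp-definable in $\s D$, so I would instead route through the arc-consistency closure. Consider the operator that sends a family $(V_S)_S$ to the family whose $S$-component is $V_S$ intersected with $\pi_i\big((\prod_j V_{S_j})\cap R^{\s D}\big)$ for every constraint $(S_1,\dots,S_k)\in R^{\s Q_m}$ placing $S$ in position $i$. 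This operator is monotone and is built from conjunctions and existential quantifications of relations of $\s D$, so it preserves pp-definability; iterating it from the all-$D$ family stabilizes after finitely many steps since $D$ is finite, yielding a pp-definable greatest fixed point $U^\ast=(U^\ast_S)_S$. A family of predicates is a fixed point of this operator precisely when it witnesses arc-consistency of $\s Q_m$, and since $S\mapsto S$ is a fixed point it lies below $U^\ast$, so every $U^\ast_S\supseteq S$ is nonempty. Hence $\s Q_m$ is arc-consistent with nonempty predicates, width 1 supplies a solution, and we are done.

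The step I expect to be delicate is exactly this last one: arranging $\s Q_m$ so that its solutions faithfully encode arity-$m$ totally symmetric polymorphisms \emph{and} that it is arc-consistent in the pp-definable sense demanded by the definition. The pp-definability of the witnessing predicates is the point a careless argument would overlook, and it is why one passes to the arc-consistency closure rather than using the naive predicates $U_S=S$ directly.
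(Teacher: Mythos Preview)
The paper does not include its own proof of this theorem; it is quoted as a result of Dalmau and Pearson \cite{Width1}. Your proposal is a correct reconstruction of the standard argument. The converse direction you give (totally symmetric polymorphisms imply width~1) is essentially the computation the paper later carries out in the proof of Theorem~\ref{thm:width1 implies classical}, where a totally symmetric polymorphism of arity at least $|D|$ times the maximal relation arity converts an arc-consistency witness into a solution; your choice of $N$ is instance-dependent but can be replaced by this uniform bound. For the forward direction, your instance $\s Q_m$ on nonempty subsets of $D$ together with the passage to the arc-consistency closure to secure pp-definable witnessing predicates is exactly the Dalmau--Pearson construction, and the point you flag as delicate---that the naive predicates $U_S=S$ need not be pp-definable---is handled correctly by the monotone fixed-point argument.
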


Indeed, $\horn$ has the $n$-ary ``and" as a polymorphism for all $n$. Arc-consistency only considers unary information, We can generalize this to consider binary interactions between constraints. For instance, the usual algorithm testing 2-colorability of graphs involves testing for odd length chains of binary relations.

\begin{dfn}\label{dfn:cycle consistent} A \textbf{path} in a relational structure $\s X$ is a sequence 
\[ P=(x_1, (e_1, R_1), x_2, (e_1, R_2), ..., x_n)\] if each $x_i\in X$, each $e_i\in R_i$, and for each $i$ there are $j_i,k_i$ so that $(x_i,x_{i+1})=\pi_{jk}e_i$. We say that $P$ has \textbf{coordinates} $(j_1,k_1,j_2,k_2,...,j_{n-1},k_{n-1})$. 

A \textbf{closed path} in a relational structure $\s X$ is a path $(x_1, (e_1, R_1),...,(e_{n-1}, R_{n-1}), x_n)$ with $x_1=x_n$.

An instance $\s X$ of $\s D$ is \textbf{cycle-consistent} if it is arc-consistent as witnessed by predicates $U_x$ and for any closed path $P$ with coordinates $(j_1,k_1,...,j_{n-1},k_{n-1})$,

\[(\forall a_1=a_{n}\in U_{x_1})(\exists a_2, a_3,...,a_{n-1})\; \bigwedge_{1\leq i\leq n-1} \left(U_{x_i}(a_i)\wedge \pi_{j_i,k_i}R_i(a_{i},a_{i+1})\right).\]

A structure has \textbf{bounded width} if every cycle-consistent instance has a solution.
\end{dfn}

Really, this definition hides a theorem. One could generalize arc-consistency in any number of ways to consider tuples of arbitrary large arity, but these all turn out to be redundant. See \cite{WidthCollapse} \cite{BddWidthSurvey}. Barto and Kozik characterized the bounded width CSPs, resloving another conjecture of Feder and Vardi.

\begin{thm}[\cite{BddWidthThm}] \label{thm:bddwidth characterization}
A structure is bounded width iff $\HSP(\pol(\s D))$ does not contain $\pol(\s F)$ for any finite field $\bb F$ iff $\HS(\pol(\s D))$ does not contain $\pol(\bb F)$ for any finite field $\bb F$.
\end{thm}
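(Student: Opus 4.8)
The plan is to prove the equivalences as a short cycle of implications, writing (A) for ``$\s D$ is bounded width'', (B) for ``$\HSP(\pol(\s D))$ contains no (reduct of a) $\pol(\bb F)$'', and (C) for the same with $\HS$ in place of $\HSP$. The implication (B)$\Rightarrow$(C) is immediate from $\HS(\bb A)\subseteq\HSP(\bb A)$. By Birkhoff, $\HSP(\pol(\s D))=\var(\pol(\s D))$, so Theorem~\ref{thm:correspondences}(2) identifies the failure of (B) with ``$\s D$ pp-interprets $\bb F(3)$ for some finite field $\bb F$'', and --- after the standard normalization replacing $\s D$ by its core expanded by all singleton constants, which is pp-equivalent to $\s D$ (clauses (2),(3) of Definition~\ref{dfn: pp defns}), is idempotent, and (a consequence of the downward closure below) has the same bounded-width status as $\s D$ --- the failure of (C) becomes, via the corollary to Theorem~\ref{thm:correspondences}, ``$\bb F(3)$ is a pp-definable quotient of a pp-definable subalgebra of $\pol(\s D)$''. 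So the content is: which structures interpret, resp.\ subalgebra-quotient to, systems of linear equations over a finite field.

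For (A)$\Rightarrow$(B) I would argue the contrapositive. Recall the archetypal fact that no $\bb F(3)$ has bounded width: a system of linear equations over a finite field can be cycle-consistent --- indeed width-$k$ consistent for any prescribed $k$ --- and still unsolvable (Tseitin-type instances built over a sufficiently rich graph). Next, bounded width is closed downward under pp-constructions: for a pp-interpretation one pulls a cycle-consistent instance of the interpreted structure back, along the interpreting pp-formulas, to a cycle-consistent instance of the interpreting structure and pushes a solution forward; homomorphic equivalence and expansion of a core by constants are routine. Since pp-interpretation is a special case of pp-construction (clause (1) of Definition~\ref{dfn: pp defns}), failure of (B) gives that $\s D$ pp-constructs some $\bb F(3)$, hence $\s D$ is not bounded width.

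The real work is (C)$\Rightarrow$(A), which is essentially all of \cite{BddWidthThm}. In universal-algebra language the hypothesis says $\var(\pol(\s D))$ omits tame-congruence-theory types $\mathbf 1$ and $\mathbf 2$; equivalently, $\pol(\s D)$ carries a Mar\'oti--McKenzie chain of weak near-unanimity terms, and every algebra in $\HS(\pol(\s D))$ has strong absorption properties. The plan: given a cycle-consistent instance $\s X$ of $\s D$, propagate it to a ``Prague'' (i.e.\ $(2,3)$-consistent) instance whose constraint relations are subdirect and pairwise linked, a condition preserved under further propagation; then induct on $\sum_{x}|U_x|$. In the inductive step one invokes the Barto--Kozik Absorption Theorem --- a linked subdirect subalgebra $R\le\alA\times\alB$ of a product of finite type-$\mathbf 2$-free Taylor algebras is either full or admits a proper absorbing subuniverse on one side --- to find, uniformly across the instance, either a proper absorbing subuniverse of some $U_x$ to restrict to, or, in the residual ``center-free'' failure case, a family of congruences whose blocks give a strictly smaller consistent instance; in either case $\s X$ shrinks and solvability of the smaller instance yields solvability of $\s X$. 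The genuine obstacle is making this propagate-and-restrict loop terminate in an actual homomorphism --- simultaneously controlling absorption, linkedness, and the center case as the instance is cut down --- which is exactly the technical core of the bounded-width theorem; everything preceding it is formal.
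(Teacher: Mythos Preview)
The paper does not prove this theorem at all: it is stated with the citation \cite{BddWidthThm} and used as a black box, with no proof environment following it. So there is no ``paper's own proof'' to compare your proposal against.

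Your outline is a reasonable high-level roadmap of the Barto--Kozik argument: the easy implications (B)$\Rightarrow$(C) via $\HS\subseteq\HSP$ and (A)$\Rightarrow$(B) via closure of bounded width under pp-constructions together with the Tseitin-style fact that $\bb F(3)$ is not bounded width are correct and standard; and you correctly identify (C)$\Rightarrow$(A) as the entire substance of \cite{BddWidthThm}, naming the relevant ingredients (omitting types $\mathbf 1,\mathbf 2$, WNU chains, absorption, Prague instances, the shrink-and-propagate loop). That last paragraph is, of course, only a sketch of a sketch --- the actual absorption/linkedness/center bookkeeping is the whole paper --- but you acknowledge this. One small wrinkle: your normalization ``replace $\s D$ by its core with all singleton constants'' changes $\pol(\s D)$, so to say (C) is preserved you need that $\HS$ of the idempotent reduct is contained in $\HS$ of the original (equivalently, that the pp-equivalence of Definition~\ref{dfn: pp defns}(2),(3) respects the $\HS$ condition), which is true but deserves a word. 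None of this appears in the present paper, which simply imports the result.
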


Cycle-consistency will be important in Section \ref{section:classical}, and the bounded width structures will play a role in Section \ref{section:effective}.

Our last example is a class of structures which is not closed under pp constructions, but still has an algebraic description.

\begin{dfn}\label{dfn: basis}
A \textbf{basis} for $\csp(\s D)$ is a set of structures $F$ so that $\s X\in \csp(\s D)$ if and only if no element of $F$ admits a homomorphism into $\s X$. Bases for $\csp_B(\s D)$ are defined mutatis matandus.
\end{dfn}

Bases are also called complete obstructing sets in the computer science literature. Descriptive set theorists are very interested in cases where $\csp_B(\s D)$ has a finite basis. The corresponding classical cases can be described in terms of polymorphisms with extra tolerance (which require reference to the relational structutre to define).

\begin{thm}[\cite{FiniteBasis}] \label{thm: finite basis}
For any structure $\s D$, $\csp(\s D)$ has a finite basis if and only if $\s D$ admits a polymorphism $f$ so that
\[f(y,x,x,...,x)=f(x,y,x,...,x)=f(x,x,y,...,x)=...=f(x,x,x,...,y)=x\]
and if $k-1$ of the tuples $e_1,...,e_k$ are in $R$ for some relation $R$ of $\s D$, then
\[f(e_1,...,e_k)\in R.\]
\end{thm}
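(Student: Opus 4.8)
The plan is to prove both implications by relating the existence of such an $f$ to an a priori bound on the size of the minimal obstructions of $\csp(\s D)$ in the homomorphism order. Since the signature is finite there are only finitely many structures on any fixed vertex set, so ``$\csp(\s D)$ has a finite basis'' is equivalent to ``the minimal obstructions of $\csp(\s D)$ have boundedly many vertices'', and when a basis exists we may take it to consist of these minimal obstructions, each chosen to be a core. After routine preprocessing (it is enough to treat a core $\s D$, using that a near-unanimity operation is automatically idempotent) it suffices to show: (i) an arity-$k$ near-unanimity polymorphism $f$ satisfying the tolerance condition forces every minimal obstruction to have $O_{k,m}(1)$ vertices, where $m$ is the maximum arity in the signature; and (ii) if every minimal obstruction has at most $N$ vertices then $\s D$ carries such an $f$ for some $k=k(N,m)$.

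For (i): a $k$-ary near-unanimity polymorphism implies, by Feder--Vardi and subsequent work, that $\csp(\s D)$ has bounded-treewidth duality (a homomorphism to $\s D$ exists iff all substructures of bounded treewidth admit one), with the width bounded in terms of $k$; so every minimal obstruction $\s M$ has treewidth $O_k(1)$, and hence, since a graph of treewidth $w$ on $n$ vertices has an independent set of size $\ge n/(w+1)$, the ``constraint graph'' of $\s M$ --- vertices adjacent iff they co-occur in a tuple of some relation --- has an independent set $S$ of size $\ge |M|/O_{k,m}(1)$. If $|M|$ is large then $|S|\ge k$; pick $v_1,\dots,v_k\in S$. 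Each proper substructure $\s M\setminus\{v_i\}$ admits a homomorphism $g_i\colon\s M\setminus\{v_i\}\to\s D$ because $\s M$ is a core minimal obstruction. Apply $f$ coordinatewise to $g_1,\dots,g_k$, putting an arbitrary element of $D$ in coordinate $j$ at the vertex $v_j$ (where $g_j$ is undefined), to get $g\colon M\to D$. A constraint of $\s M$ disjoint from $\{v_1,\dots,v_k\}$ is respected because all $k$ input tuples lie in the relation and $f$ is a polymorphism; a constraint meeting $\{v_1,\dots,v_k\}$ meets it in exactly one $v_j$ (as $S$ is constraint-independent), so exactly $k-1$ of the input tuples are known to lie in the relation and the last carries junk in a single coordinate --- precisely the hypothesis of the tolerance condition, which forces the output into the relation. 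So $g\colon\s M\to\s D$, contradicting that $\s M$ is an obstruction; hence $|M|=O_{k,m}(1)$.

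For (ii): fix $k$ large relative to $N$ and $m$ and let $\s J$ be the instance of $\s D$ whose variables are the $k$-tuples over $D$, with each tuple that is constantly $x$ in all but one coordinate identified with $(x,\dots,x)$, and with a constraint $R(\bar a_1,\dots,\bar a_r)$ whenever, for all but at most one coordinate $j\le k$, the column $((\bar a_1)_j,\dots,(\bar a_r)_j)$ lies in $R^{\s D}$; a short device (gluing an isomorphic copy of $\s D$ onto the constant tuples and postcomposing a solution with a suitable automorphism of $\s D$) pins each constant tuple $(x,\dots,x)$ to $x$. By construction a homomorphism $\s J\to\s D$ is exactly an arity-$k$ polymorphism of $\s D$ satisfying the near-unanimity identities (from the identification, once pinned) and the tolerance condition (from the ``all but one column'' clause). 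So it remains to see $\s J\in\csp(\s D)$, i.e.\ that no minimal obstruction maps into $\s J$. If $\psi\colon\s M\to\s J$ were one, its image uses at most $N$ of the coordinate-indexed tuples and hence at most $O_{N,m}(1)$ constraints of $\s M$; each such constraint ``excuses'' at most one coordinate $j$, so once $k$ exceeds this count there is a coordinate $j_0$ good for all of them, and $\bar a\mapsto(\bar a)_{j_0}$ restricted to the image of $\psi$ is a homomorphism into $\s D$. Composing with $\psi$ gives $\s M\to\s D$, a contradiction; thus $\s J\to\s D$ and any witness is the desired $f$.

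The reductions (compactness to make (i) a statement about finite instances, the bookkeeping of good and excused coordinates, the core and idempotency preprocessing) are routine, so the real obstacle is the combinatorial core common to (i) and (ii): showing that a large minimal obstruction contains a large constraint-independent set of vertices and, dually, that the auxiliary instance $\s J$ has no small obstruction mapping into it despite its extra tolerance constraints. This is exactly the point at which the tolerance hypothesis must be married to the sparseness coming from the near-unanimity identities, and pinning down the correct quantitative dependencies --- the bound $O_{k,m}(1)$ on the size of a minimal obstruction in one direction, and the choice $k=k(N,m)$ in the other --- is the crux.
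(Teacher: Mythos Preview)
The paper does not prove this theorem: it is stated with attribution to \cite{FiniteBasis} and immediately followed by the sentence ``We will not use this theorem in this paper, though bases will play a role in several places.'' There is no argument in the paper to compare your attempt against.

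On the merits of your sketch, the two-step architecture (bound the size of critical obstructions for one direction; build an indicator instance for the other) is the right shape, but both directions have real gaps. In (i), the inference ``bounded-treewidth duality $\Rightarrow$ every minimal obstruction $\s M$ has treewidth $O_k(1)$'' is not valid as stated: duality gives a bounded-treewidth $O$ with $O\to\s M$, and criticality makes this map vertex-surjective, but homomorphic images need not inherit treewidth bounds (pushing forward the bags $\phi(B_t)$ need not yield a tree decomposition, since the preimage of a single vertex of $\s M$ may meet disconnected bags). Without a treewidth bound on $\s M$ you lose the degeneracy argument for a large constraint-independent set, and the independent-set argument alone cannot bound $|M|$ (a Gaifman graph can have independence number $<k$ with arbitrarily many vertices). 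In (ii), once you identify each near-constant tuple $(x,\dots,y,\dots,x)$ with $(x,\dots,x)$, the coordinate projection $\bar a\mapsto(\bar a)_{j_0}$ is no longer well-defined on the quotient (the class of $(y,x,\dots,x)$ projects to $y$ via one representative and to $x$ via another), so the final step ``$\bar a\mapsto(\bar a)_{j_0}$ restricted to the image of $\psi$ is a homomorphism into $\s D$'' does not go through; separating the identification that forces the near-unanimity identities from the projection that kills obstructions is precisely the delicate point, and your ``short device'' does not address it.
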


We will not use this theorem in this paper, though bases will play a role in several places. Note that the above theorem does not refer only to $\pol(\s D)$. Indeed, it is not true that a finite basis for $\csp(\s D)$ will pass through a pp definition to give a finite basis for $\csp(\s E)$. 

%%Map?
\section{Simple constructions} \label{section:simple}

We would like a Borel analogue of the polynomial time reductions induced by pp constructions. Unfortunately, the finitary construction generally requires taking a quotient, which is not always possible in the descriptive setting (for instance, $\R/\Q$ is not a standard Borel space). Here we introduce a stronger notion of simulation, which we call a simple construction, which does not present this difficulty. And, we prove some combinatorial lemmas relating simple and pp constructions.

\begin{dfn}\label{dfn: simple dfns}
For $\s E$ and $\s D$ structures on the same domain, say that $\s D$ \textbf{simply defines} $\s E$ if every relation in $\s E$ can be written as an existentially quantified conjunction of relations in $\s D$. We call the formulas defining $\s E$ in terms of $\s D$ relations a \textbf{simple definition}.

Say that $\s D$ \textbf{simply interprets} $\s E$ if $\s E$ is a quotient of a structure simply definable in a power of $\s D$. We call the defining formulas a \textbf{simple interpretation}.

And, say that $\s D$ \textbf{simply constructs} $\s E$ if $\s E$ can be built from $\s D$ by a chain of simple interpretations, homomorphic equivalences, and singleton expansions of cores. We call this sequence a \textbf{simple construction}.
\end{dfn}

That is, simple constructions are pp constructions where we do not use equality (unless out structures come equipped with equality). Our first main lemma about simple constructions comes from a careful analysis of the proof that singleton expansions of cores do not change computational complexity.

\begin{lem}\label{lem:=c}
For any finite core structure $\s D$ and $c\in D$, the following predicate is simply definable in $\s D$:
\[(=_c):=\{(x,x): (\exists f\in \aut(\s D)) f(c)=x\}.\]
\end{lem}
\begin{proof}
Suppose $D=\{1,2,...,n\}$ and $c=1$. The predicate $H$ given by
\begin{align*}H(x_1,...,x_n)& :\lra \; d\mapsto x_d\mbox{ defines an endomorphism of } \s D \\
\; & \lra \bigwedge_{R}\bigwedge_{e\in R} R(x_{e_1},...,x_{e_n})\end{align*}
is simply definable. So, it suffices to show that
\[a =_c b\lra (\exists x_2,...,x_n)\; H(a, x_2,...,x_n)\wedge H(b, x_2,...,x_n). \] If $a=_c b$, then there is some isomorphism $f: \s D\rightarrow \s D$ with $f(c)=a$. Then the assignment $x_d=f(d)$ satisfies $H(a, x_2,...,x_n)\wedge H(b, x_2,...,x_n)$. 

Conversely, if $H(a, x_2,....,x_n)\wedge H(b, x_2, ...,x_n)$, then we have homomorphisms $f,g$ with $f(c)=a$, $g(c)=b$ and $f(d)=g(d)$ for $d\not=c$. These homomorphisms must be automorphisms since $\s D$ is a core. And if $a\not=b$, we must have that one of $f$ or $g$ is not onto, which is impossible. So, $a =_c b$.
\end{proof}
\begin{cor}\label{cor:transitive}
If $\s D$ has a transitive automorphism group and $\s D$ pp constructs $\s E$, then $\s D$ simply constructs $\s E$.
\end{cor}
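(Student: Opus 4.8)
The plan is to reduce to the core, then use Lemma~\ref{lem:=c} together with transitivity to obtain the honest equality relation as a simply definable predicate, and finally to replay the given pp construction as a simple construction while carrying an equality relation along at every stage.

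First I would pass to the core $\s D'$ of $\s D$, realized as the image of a retraction $r\colon \s D\to\s D'$ with $r\res D'=\mathrm{id}$. Since $\s D'$ and $\s D$ are homomorphically equivalent, $\s D$ simply constructs $\s D'$, and $\s D'$ pp constructs $\s E$ (prepend the homomorphic equivalence between $\s D'$ and $\s D$ to the given pp construction); so it is enough to produce a simple construction from $\s D'$ to $\s E$. I would also record that $\aut(\s D')$ is again transitive: given $a,b\in D'$, pick $\phi\in\aut(\s D)$ with $\phi(a)=b$; then $r\circ(\phi\res D')$ is an endomorphism of the core $\s D'$, hence an automorphism, and it sends $a$ to $r(b)=b$. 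Now Lemma~\ref{lem:=c} applied to $\s D'$ and any $c\in D'$ shows $(=_c)$ is simply definable in $\s D'$, and transitivity of $\aut(\s D')$ makes $(=_c)$ equal to the full diagonal $\{(x,x):x\in D'\}$. So $\s D'$ simply constructs $\s D'\cup\{(=)\}$.

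The core of the argument is then the claim: if a finite structure $\s F$ simply defines $(=)$ and pp constructs $\s E$, then $\s F$ simply constructs $\s E$. Fixing a pp construction $\s F=\s F_1,\dots,\s F_m=\s E$, I would show by induction on $i$ that $\s F$ simply constructs $\s F_i^+:=\s F_i\cup\{(=)\}$. The key observation is that once equality is an actual relation, pp-definability over $\s F_i$ coincides with simple definability over $\s F_i^+$ --- the two notions differ only in whether equality atoms are permitted. Hence a pp interpretation of $\s F_{i+1}$ in $\s F_i$ becomes a simple interpretation of $\s F_{i+1}$ in $\s F_i^+$; to keep the induction going one includes the quotient equivalence relation $\theta$ (which is $\theta$-invariant and simply definable over $\s F_i^+$) among the interpreted relations, so that its image --- the diagonal of $\s F_{i+1}$ --- is present, yielding $\s F_{i+1}^+$. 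A homomorphic-equivalence step survives adjoining $(=)$ because every map preserves equality, and a singleton expansion of a core $\s F_i$ lifts to one of $\s F_i^+$, since $\s F_i^+$ is still a core (endomorphisms automatically respect equality). At the end $\s F$ simply constructs $\s E^+$, and then $\s E$ itself by the simple definition that merely forgets the equality relation (each relation of $\s E$ is trivially an existentially quantified conjunction of relations of $\s E^+$).

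The main obstacle I anticipate is the pp-interpretation step of the induction: one must be careful that the interpreted structure can be taken to carry its own equality relation (so the invariant $\s F_i^+$ persists into the next step), and that adjoining equality does not destroy coreness (needed to license the singleton-expansion steps). Everything else --- the reduction to the core, the transitivity of $\aut(\s D')$, and the final reduct --- is routine bookkeeping, and the substance of the corollary is really the combination of Lemma~\ref{lem:=c} with the slogan ``pp over $\s F$ equals simple over $\s F\cup\{(=)\}$''.
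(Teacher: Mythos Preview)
Your proposal is correct and follows the same approach as the paper's proof: pass to the core (which remains transitive), apply Lemma~\ref{lem:=c} to see that $(=_c)$ coincides with the full diagonal, and then use that once equality is a genuine relation every pp step is already a simple step. The paper compresses all of this into two lines, leaving both the transitivity of the core and the ``pp over $\s F$ equals simple over $\s F\cup\{(=)\}$'' induction implicit; you have simply unpacked those details, including the care needed to carry equality through each interpretation step.
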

\begin{proof}
We can replace $\s D$ by its core, which will still be transitive. Then, by the above lemma $\s D$ simply constructs $(=_c)= (=)$. 
\end{proof}

Our second main lemma comes from a careful analysis of the proof that $\pol(\s D)$ invariant relations are pp definable in $\s D$.

\begin{dfn}\label{dfn: implies equation}
Say that a relation $R$ implies an equation if $R(x_1,...,x_n)$ implies $x_i=x_j$ for some $i,j$, i.e. $\pi_{ij}(R)\subseteq (=).$
\end{dfn}

\begin{lem}\label{lem:equation free}
If $R$ does not imply any equations and $R$ is pp definable in $\s D$, then $R$ is simply definable in $\s D$.
\end{lem}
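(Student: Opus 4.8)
We have $R$ pp-definable in $\s D$, say
$$R(x_1,\dots,x_n) \lra (\exists y_1,\dots,y_k)\ \bigwedge_i \alpha_i(\bar x,\bar y),$$
where each $\alpha_i$ asserts either a relation of $\s D$ or an equality. The plan is to eliminate the equality atoms one at a time without changing the defined relation, producing an equality-free (hence simple) definition. There are two kinds of equality atoms to worry about: those involving at least one existentially quantified variable $y_j$, and those of the form $x_i = x_{i'}$ between free variables.

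First I would handle equalities that mention a quantified variable. If $\alpha_i$ is $y_j = t$ for some term $t$ (another variable, free or bound), then $y_j$ is pinned down by $t$, so I can substitute $t$ for every occurrence of $y_j$ throughout the formula and drop the quantifier $\exists y_j$ and the atom $\alpha_i$; the resulting pp-formula defines the same relation. Iterating, after finitely many steps no equality atom involves any quantified variable. What remains is a pp-formula whose only equality atoms are of the form $x_i = x_{i'}$ with $i,i'$ both free indices. This is the crux. Here is where the hypothesis that $R$ implies no equation does the work: if $x_i = x_{i'}$ appeared as a conjunct, then every tuple in $R$ would satisfy its $i$-th and $i'$-th coordinates being equal, i.e. $\pi_{i,i'}(R)\subseteq (=)$, contradicting the assumption. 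Wait — this needs a little care, since an equality $x_i = x_{i'}$ that is \emph{implied} by the other conjuncts but not literally written could still be harmless; but if it is literally written as a conjunct it forces $\pi_{i,i'}(R)\subseteq(=)$ outright. So after the substitution step above, the surviving formula literally contains no equality conjunct $x_i=x_{i'}$ at all, because any such conjunct would witness an implied equation. Hence the formula is equality-free, i.e. a simple definition, and we are done.

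Let me reconsider the edge cases to make sure the argument is airtight. When I substitute $t$ for $y_j$, the term $t$ could itself be one of the free variables $x_\ell$; that is fine, the relations of $\s D$ asserted by the other $\alpha_i$ may now mention $x_\ell$ in place of $y_j$, which is allowed in a simple definition. If $t$ is another quantified variable $y_m$, I substitute and keep $\exists y_m$; still fine. One should also confirm there is no circularity: process the equality atoms involving quantified variables in any order, each step strictly decreasing the number of quantified variables that occur in an equality atom, so the process terminates. The main obstacle — and the only place the hypothesis is used — is precisely the observation that after eliminating quantified-variable equalities, any remaining equality atom is between two free variables and hence directly contradicts ``$R$ implies no equation''; everything else is bookkeeping on formulas. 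I would also remark, for use later, that the simple definition produced uses exactly the relations of $\s D$ that already appeared in the pp-definition, with no new relations introduced.
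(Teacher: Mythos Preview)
Your proof is correct and takes a genuinely different route from the paper's. You argue syntactically: starting from a given pp definition, you substitute away each quantified variable that occurs in an equality atom, then observe that any surviving equality conjunct $x_i = x_{i'}$ with $i\neq i'$ would directly witness an implied equation, contradicting the hypothesis. (One tiny omission: the case $i=i'$ can arise after substitution and should be explicitly discarded as tautological; this is harmless.) The paper instead argues algebraically and never touches the given pp formula. It lists the tuples of $R$ as the rows of a matrix with columns $\sigma_1,\dots,\sigma_k\in D^{|R|}$ and shows, using that pp-definable relations are $\pol(\s D)$-invariant, that
\[
R(x_1,\dots,x_k)\;\lra\;(\exists f\in\pol(\s D))\;\bigwedge_i f(\sigma_i)=x_i.
\]
The hypothesis that $R$ implies no equation is used only to conclude that the $\sigma_i$ are pairwise distinct, so that the simply definable predicate ``the map $\sigma\mapsto x_\sigma$ on $D^{|R|}$ is a polymorphism'' can be instantiated with the free variables $x_1,\dots,x_k$ plugged in at the coordinates $\sigma_1,\dots,\sigma_k$ without any clashes forcing an equality.

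Your approach is more elementary---it avoids the Galois correspondence (Theorem~\ref{thm:correspondences}(1)) entirely---and, as you note, it produces a simple definition using only relation symbols already present in the original pp formula. The paper's approach fits the algebraic theme of the section and yields a canonical simple definition depending only on $R$ and $\s D$, not on the particular pp formula one started from; neither advantage is actually exploited downstream, so both proofs serve equally well for Corollaries~\ref{cor:HS thing} and~\ref{cor:not bdd width}.
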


\begin{proof}
Let $M$ be a matrix whose rows are the tuples in $R$, and let $\sigma_i$ be the $i^{th}$ column of $M$. Note that $R$ not implying any equations means that the $\sigma_i$s are all distinct. Since $R^{\s D^n}(\sigma_1,...,\sigma_k)$ holds in the product, if there is a polymorphism $f\in\pol(\s D)$ so that $f(\sigma_i)=x_i$, then $R(x_1,...,x_k)$. Conversely, if $R(x_1,...,x_k)$, say $(x_1,...,x_k)$ is the $j^{th}$ row of $M$, then the projection $\pi_j$ is a polymorphism with $\pi_j(\sigma_i)=x_i$. Thus,
\[R(x_1,...,x_k)\lra (\exists f\in \pol(\s D))\;f(\sigma_i)=x_i.\]
Since the $\sigma_i$ are all distinct, we can eliminate equality in the above definition by replacing each instance of $\sigma_i$ with $x_i$. More formally, Suppose $\s D^k=\{\sigma_1,...,\sigma_k, \sigma_{k+1},...,\sigma_m\}$, and let \[P(x_1,...,x_m):\lra \sigma_i\mapsto x_i\mbox{ is a polymorphism of }\s D.\] Then $P$ is simply definable, and we have a simple definition of $R$ given by
\[R(x_1,...,x_k)\lra (\exists x_{k+1},...,x_{m})\; P(x_1,...,x_m).\]
\end{proof}

\begin{cor}\label{cor:HS thing}
If no relation in $\s E$ implies an equation and $\pol(\s E)\in \HS(\pol(\s D))$, then $\s D$ simply constructs $\s E$.
\end{cor}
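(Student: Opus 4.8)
The statement follows by combining Lemma \ref{lem:equation free} with the structure of the $\HS$ operator and the correspondences of Theorem \ref{thm:correspondences}. The plan is to unwind the hypothesis $\pol(\s E)\in\HS(\pol(\s D))$ into a pp interpretation and then replace each use of equality by an existential quantifier, using that no relation of $\s E$ implies an equation.

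Let me sketch the steps. First, by the Corollary following Theorem \ref{thm:correspondences}, $\pol(\s E)\in\HS(\pol(\s D))$ means that $\s E$ is a pp definable quotient of a pp definable substructure $\s D'$ of some power $\s D^m$; write $\s E=\s D'/\theta$ for a pp definable congruence $\theta$. Since $\s D$ simply constructs $\s D^m$ (products are built into simple interpretations), and passing to a pp definable substructure and quotient is exactly a single simple interpretation \emph{except} for the equalities that may appear, it suffices to show that in the pp definitions of the relations of $\s E$ (pulled back to $\s D'$, or equivalently the pp definitions over $\s D^m$ describing the interpretation), every occurrence of an equality atom $\alpha_i$ can be eliminated. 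Concretely, each relation $R$ of $\s E$ has a pp definition over $\s D^m$ of the form $R(x_1,\dots,x_k)\lra(\exists \bar y)\bigwedge_i\alpha_i$; I want to rewrite this as an existentially quantified conjunction of relations of $\s D^m$ with no equality atoms.

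Now the key point: because $R$ does not imply any equation, Lemma \ref{lem:equation free} applies directly to $R$ as a relation pp definable in $\s D^m$ — or rather, I should apply the lemma to the relation on $\s D^m$ obtained by viewing $\s E$ as literally a quotient structure. The cleanest route is: replace $\s E$ by the isomorphic copy living inside $\s D^m$ (choosing one representative per $\theta$-class is not Borel-safe, but at the finitary level here it is fine; and in fact we do not even need representatives — we can work with the pp definable relations on the substructure $\s D'$ whose quotient gives $\s E$, noting that "implies an equation" is preserved since a quotient map collapsing $\theta$ can only create fewer distinctions, so if a lift implied an equation so would $R$). Once each relation of $\s E$ is realized as a relation pp definable in $\s D$ (via the substructure of the power) that implies no equation, Lemma \ref{lem:equation free} gives an equality-free, i.e. simple, definition of it in terms of $\s D$. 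Assembling these simple definitions, together with the homomorphic-equivalence step identifying $\s E$ with its copy, exhibits a simple construction of $\s E$ from $\s D$.

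The main obstacle is bookkeeping the quotient carefully: Lemma \ref{lem:equation free} is stated for relations pp definable in $\s D$, and $\s E$'s relations a priori live on a quotient, so one must check that "no relation of $\s E$ implies an equation" lifts to the appropriate relations on the power/substructure before the quotient is taken — or, alternatively, reprove the lemma directly in the pp-interpretation setting (matrices of tuples whose columns are $m$-tuples from $\s D$, polymorphisms acting coordinatewise) so that the quotient never needs an explicit transversal. I expect the latter to be the intended argument: rerun the proof of Lemma \ref{lem:equation free} with $\sigma_i$ ranging over columns that are elements of $\s D^m$ rather than $\s D$, using that distinctness of the columns is exactly the failure of $R$ to imply an equation, and conclude the simple definition over $\s D$ after noting a power is simply constructible.
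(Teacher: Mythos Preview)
Your approach is essentially the same as the paper's, but you have introduced an unnecessary complication: the hypothesis is $\pol(\s E)\in\HS(\pol(\s D))$, not $\HSP$, and the Corollary after Theorem~\ref{thm:correspondences} gives $\s E$ as a pp definable quotient of a pp definable substructure of $\s D$ itself, with no power $\s D^m$ involved. Once you drop the power, your bookkeeping worries evaporate. The paper simply takes the surjection $f:U\to\s E$ with $U\subseteq D$ pp definable, observes that a unary predicate cannot imply an equation, and that each $f^{-1}(R)$ cannot imply an equation since $R$ does not (exactly the contrapositive argument you sketched); then Lemma~\ref{lem:equation free} applies directly to $U$ and to each $f^{-1}(R)$, yielding a simple interpretation of $\s E$ in $\s D$.

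So there is no gap, and no need to rerun the proof of Lemma~\ref{lem:equation free} over $\s D^m$ or to worry about transversals of $\theta$: the data of a simple interpretation (Definition~\ref{dfn: simple dfns} and the proof of Theorem~\ref{thm:simple reduction}) only requires simple definitions of the domain and of the preimages $f^{-1}(R)$, not a section of the quotient map.
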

\begin{proof}
If $\s E$ is in $\HS(\pol(\s D))$, then $\s E$ is a pp definable quotient of a pp definable substructure of $\s D$, i.e. there is some $U\subseteq \s D$ and some $f:U\rightarrow \s E$ with $U$ and $f\inv(R)$ pp definable for each relation $R$ in $\s E$. No unary predicate can imply an equation, and since no $R$ in $\s E$ implies an equation neither does any $f\inv(R)$. So by our lemma, all of these are in fact simply definable in $\s D$.
\end{proof}
\begin{cor}\label{cor:not bdd width}
If $\s D$ is not bounded width, then $\s D$ simply constructs $\bb F(3)$ for some finite field $\bb F$.
\end{cor}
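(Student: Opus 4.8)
The plan is to combine the bounded-width characterization (Theorem \ref{thm:bddwidth characterization}) with Corollary \ref{cor:HS thing}. Since $\s D$ is not bounded width, Theorem \ref{thm:bddwidth characterization} supplies a finite field $\bb F$ with $\pol(\bb F(3))\in\HS(\pol(\s D))$, so one is tempted to apply Corollary \ref{cor:HS thing} directly to $\s E=\bb F(3)$. This does not work, because $\bb F(3)$ carries relations that imply equations --- the degenerate affine subspaces, e.g.\ $\{(x_1,x_2,x_3):x_1=x_2\}$ --- and routing around this is the main point.

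The fix is to pass first to an equation-free structure with the same polymorphism clone. Let $\s E$ be the structure on $\bb F$ whose relations are the graph of addition $A=\{(x,y,z):x+y=z\}$, the singletons $\{0\}$ and $\{1\}$, and, for each $a\in\bb F\setminus\{0,1\}$, the graph $M_a=\{(x,ax):x\in\bb F\}$ of multiplication by $a$ (over a prime field, $A$, $\{0\}$, $\{1\}$ already suffice). None of these relations implies an equation --- unary predicates never do, $A$ does not, and $M_a$ does not since $a\neq1$ --- and a routine check identifies $\pol(\s E)$ with $\pol(\bb F(3))$: preserving $A$ forces a polymorphism to be additive, preserving the $M_a$ forces it to commute with scalar multiplication, and preserving $\{0\}$ and $\{1\}$ forces $f(0,\dots,0)=0$ and $f(1,\dots,1)=1$, so $\pol(\s E)$ is exactly the clone of idempotent linear functionals $\sum_i a_ix_i$ with $\sum_i a_i=1$. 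Thus $\pol(\s E)\in\HS(\pol(\s D))$, no relation of $\s E$ implies an equation, and Corollary \ref{cor:HS thing} yields that $\s D$ simply constructs $\s E$.

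It remains to show $\s E$ simply constructs $\bb F(3)$; concatenating the two constructions then finishes the proof. The crucial observation --- this is where the obstruction is defused --- is that $\s E$ simply defines the equality relation: $x=y$ holds iff $(\exists u)\,\big(A(x,u,y)\wedge\{0\}(u)\big)$, which is a bona fide simple definition (an existentially quantified conjunction of $\s E$-relations) even though the relation it defines implies an equation, since a conjunction of equation-free relations can perfectly well imply equations. Hence $\s E$ simply constructs $\s E\cup\{(=)\}$, and over $\s E\cup\{(=)\}$ a simple definition is the same thing as a pp definition over $\s E$; the usual linear-algebra chaining (introduce auxiliary variables for partial sums, as in the argument that $\bb F(3)$ pp constructs $\bb F(k)$) then writes every relation of $\bb F(3)$ as a pp formula over $\s E$, i.e.\ a simple formula over $\s E\cup\{(=)\}$. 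Chaining the simple constructions $\s D\rightsquigarrow\s E\rightsquigarrow\s E\cup\{(=)\}\rightsquigarrow\bb F(3)$ gives the claim. The only genuine obstacle is the equation-implying relations in $\bb F(3)$, together with the bookkeeping needed to pin $\pol(\s E)$ down to $\pol(\bb F(3))$ exactly --- it is that equality that licenses the hypothesis of Corollary \ref{cor:HS thing}.
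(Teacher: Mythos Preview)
Your argument is correct and is precisely the kind of detail-filling the paper leaves to the reader: the corollary is stated immediately after Corollary~\ref{cor:HS thing} with no proof, so the intended route is clearly via that result together with Theorem~\ref{thm:bddwidth characterization}. You correctly spot the one genuine obstruction---that $\bb F(3)$ as defined includes relations like $\{(x_1,x_2,x_3):x_1=x_2\}$ which imply equations, so Corollary~\ref{cor:HS thing} does not apply to $\bb F(3)$ directly---and your workaround (pass to an equation-free $\s E$ with $\pol(\s E)=\pol(\bb F(3))$, then recover equality simply via $A$ and $\{0\}$) is clean and correct. The verification that $\pol(\s E)$ is exactly the idempotent $\bb F$-linear functionals is right, including the need for the $M_a$ relations over non-prime fields to upgrade additivity to $\bb F$-linearity. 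One cosmetic point: the singleton $\{0\}$ is redundant for the polymorphism computation (additivity already forces $f(\bar 0)=0$), but you need it for the simple definition of equality, so including it is the right call.
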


We're ready to prove our descriptive analog of Bulatov and Jeavons's theorem about polynomial time reductions. Since we have not specified our coding, we will leave it to the reader to verify that the construction below is $\Delta^1_1$ in the codes, though this is straightforward using the coding described in the appendix and Lemma \ref{coding lemmas}. 

\begin{thm}\label{thm:simple reduction}
If $\s D$ simply constructs $\s E$, then $\csp_B(\s E)$ Borel reduces to $\csp_B(\s D)$. In fact, there are maps $F, G,$ and $H$ which are $\Delta^1_1$ in the codes so that:
\begin{enumerate}
    \item If $\s X$ is an instance of $\s E$ then $F(\s X)$ is an instance of $\s D$
    \item If $g$ is a solution to $\s X$, then $G(g)$ is a solution to $F(\s X)$
    \item If $h$ is a solution to $F(\s X)$, then $H(h)$ is a solution to $\s X$. 
\end{enumerate} And there is some finite $N$ so that, if each $x\in \s X$ appear in fewer than $\kappa$ tuples in relations $\s X$, then each $y\in F(\s X)$ appears in fewer than $N\times\kappa$ tuples in $F(\s X)$.
\end{thm}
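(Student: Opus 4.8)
The plan is to induct on the length of the simple construction witnessing that $\s D$ simply constructs $\s E$, handling each of the three elementary steps (simple interpretation, homomorphic equivalence, singleton expansion of a core) separately and composing the resulting reductions. So it suffices to produce maps $F,G,H$ as in the statement for each single step, together with the linear bound $N$ on how many tuples a variable appears in; composing finitely many such steps multiplies the constants $N$ and composes $\Delta^1_1$ maps, which is again $\Delta^1_1$.

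First I would treat the base case of a \emph{simple definition}: if each relation $R$ of $\s E$ is $R(\bar x)\lra(\exists\bar y)\bigwedge_i \alpha_i(\bar x,\bar y)$ with the $\alpha_i$ relations of $\s D$, then given an instance $\s X$ of $\s E$ I build $F(\s X)$ on the vertex set $X$ together with fresh ``witness'' vertices: for each tuple $\bar x\in R^{\s X}$ introduce new variables for the existentially quantified $\bar y$ and impose the atomic $\s D$-constraints $\alpha_i$. A solution $g$ to $\s X$ extends (using the definition of $R$ and the axiom of choice, or rather a Borel uniformization — this is the one place to be careful) to a solution of $F(\s X)$; restricting a solution $h$ of $F(\s X)$ to the original vertices $X$ gives a solution to $\s X$. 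The Borel-ness of $G$ is where a measurable-selection/uniformization argument is needed: the witnesses $\bar y$ are chosen from a finite set $D$, so one can pick them in a Borel way by enumerating $D$ and taking the lexicographically least valid choice. Each original vertex $x$ gains at most $(\text{number of relations})\times(\text{max arity})$ new incident constraints per tuple it was in, and each witness vertex is incident to a bounded number of constraints, so $N$ is finite. The \emph{simple interpretation} case is the same construction carried out inside a fixed power $\s D^m$ — vertices of $\s X$ become $m$-tuples of $\s D$-vertices and one uses the coordinatewise structure of $\s D^m$; the quotient map is applied after solving, i.e. it only affects $H$ (composing a solution into $\s D^m$ with the quotient $\s D^m\to\s E$'s section-free version), so it introduces no descriptive-set-theoretic difficulty because we go \emph{from} $\s E$-instances \emph{to} $\s D$-instances.

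Next, \emph{homomorphic equivalence}: if $p:\s E\to\s E'$ and $q:\s E'\to\s E$ are homomorphisms, then an instance $\s X$ of $\s E$ is literally an instance of $\s E'$ (same signature), $F$ is the identity on instances, $G(g)=p\circ g$, and $H(h)=q\circ h$; no new vertices, so $N=1$. Finally, \emph{singleton expansion of a core}: here $\s E=\s D\cup\{U\}$ with $U(x)\lra x=a$ for a core $\s D$. This is the step that genuinely uses Lemma~\ref{lem:=c}. Given an instance $\s X$ of $\s E$, the vertices carrying the predicate $U$ must all map to $a$, hence to each other; using the simple definition of $(=_a)$ from the lemma, I add $(=_a)$-constraints (expanded into their $\s D$-definition, introducing the bounded number of witness vertices from the lemma's proof) linking all $U$-vertices of $\s X$ to a single fixed vertex, producing an instance $F(\s X)$ of $\s D$ alone. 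A solution of $\s X$ clearly gives a solution of $F(\s X)$; conversely a solution $h$ of $F(\s X)$ maps all the $U$-vertices to a common value $x$ with $x=_a$-related to $a$, i.e. $x=\phi(a)$ for some $\phi\in\aut(\s D)$, and then $\phi\inv\circ h$ is a solution of $\s X$ sending every $U$-vertex to $a$ — this composition by a \emph{fixed} automorphism is Borel. The tuple-count blowup is again linear, with $N$ depending on the number of atomic formulas in the $\s D$-definition of $(=_a)$.

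The main obstacle I expect is not any single step but making the Borel-coding bookkeeping uniform: one must check that ``introduce a fresh vertex for each constraint tuple'' and ``enumerate $D$ and pick the least valid witness'' are $\Delta^1_1$ operations on Borel codes, which is exactly what the paper defers to the appendix coding and Lemma~\ref{coding lemmas}. Conceptually the delicate point is the Borel uniformization inside $G$ for the simple-definition step; everything else is either the identity on instances or post-composition with a fixed finite-to-one or bijective map. Once those are in place, the linear bound on tuple-appearances follows because every elementary step replaces each constraint by a bounded number of constraints and each new vertex participates in boundedly many of them, so after $n$ steps the constant is the product of the per-step constants, which is the finite $N$ claimed.
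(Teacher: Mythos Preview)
Your overall inductive scheme and your treatment of homomorphic equivalence and simple interpretation are essentially what the paper does (the paper invokes Lusin--Novikov where you invoke lexicographic choice in the finite set $D$; since $D$ is finite these amount to the same thing). The genuine gap is in the singleton-expansion step, specifically the degree bound in the last clause of the theorem. You propose to link every $U$-vertex of $\s X$ via $(=_a)$ to \emph{a single fixed vertex}; that fixed vertex then appears in one constraint per $U$-vertex, so its degree is the cardinality of $U^{\s X}$, which is completely unrelated to $\kappa$. Hence your $F(\s X)$ does not satisfy the claimed bound that each $y\in F(\s X)$ appears in fewer than $N\kappa$ tuples. The basic reduction (items (1)--(3)) would go through, but the last clause fails, and that clause is exactly what is needed downstream to pass to locally finite or bounded-degree instances.

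The paper's fix is to give \emph{each} variable $x\in X$ its own private copy $\{x\}\times D$ of $\s D$ (with all the relations of $\s D$ imposed internally), to link $x$ to $(x,d)$ via $(=_d)$ only when $U_d^{\s X}(x)$, and to synchronize neighboring copies by imposing $(x,c)=_c(y,c)$ for $c\neq d$ whenever $x$ and $y$ share a relation in $\s X$. Now every new vertex $(x,c)$ has degree bounded by a constant times the degree of $x$. A solution $h$ of $F(\s X)$ restricts to an automorphism $\tilde h_x$ on each copy (because $\s D$ is a core), and the synchronization forces $\tilde h_x=\tilde h_y$ for neighboring $x,y$; one then sets $H(h)(x)=\tilde h_x^{-1}(h(x))$. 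Note this also cleans up the point you gloss over---``pick some $\phi\in\aut(\s D)$ with $\phi(a)=x$''---by making the automorphism canonical (it is read off from the copy of $\s D$ attached to $x$) rather than merely chosen.
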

This last clause means that $F$ sends bounded degree, locally finite, and locally countable instances to the same.
\begin{proof}
It suffices to consider two cases: $(1)$ $\s D$ is a core and $\s E$ is an expansion of $\s D$ by singleton unary predicates, and $(2)$ $\s E$ is simply interpretable in $\s D$.

For $(1)$ the main difficulty is controlling degrees, i.e. meeting the last clause of the theorem. We may assume $(=_c)$ is in the signature of $\s D$ for each $c\in D$ and $\s E=\s D\cup\{U_d\}$, where $U_d(x)\lra x=d$. If we did not want to worry about degree, we could let $F(\s X)$ be $\s X$ along with a copy of $\s D$ and extra relations saying $x=_d d$ whenever $U_d^{\s X}(x)$. Then any solution to $\s X$, say $g$, extends to a solution $G(g)$ of $F(\s X)$ by setting $G(g)(c)=c$ for $c\in \s D$. And if $h$ is a solution to $F(\s X)$, then $h$ restricts to an automorphism $f$ on the copy of $\s D$. So, we have a solution to $\s X$ given by $H(h)=f\inv\circ h$ on $\s X$.

In the construction we just sketched, the copy of $\s D$ ends up having quite large degree. To correct this, we give each variable in $\s X$ its own copy of $\s D$. So, define $F(\s X)$ as follows: $F(\s X)$ has domain $X\sqcup (X\times D)$ and the following relations
\begin{itemize}
    \item For $R$ a relation in $\s D$ with $R\not=(=_c)$ for any $c$
    \[R^{F(\s X)}=R^{\s X}\sqcup \{((x,a_1),...,(x,a_k)): x\in X, R^{\s D}(a_1,...,a_k)\}\]
    \item For $c\not=d$
    \[(=_c)^{F(\s X)}=(=_c)^{\s X}\sqcup\{((x,c), (y,c)): (x,y)\mbox{ share some relation in }\s X\}\]
    \item And,
    \[(=_d)^{F(\s X)}=(=_d)^{\s X} \sqcup\{(x,(x,d)): U_d^{\s X}(x)\}\]
\end{itemize}

%picture

Similar to the above, any solution $g$ to $\s X$ gives a solution $G(g)$ to $F(\s X)$ equal to the identity on any copy of $\s D$. And if $h$ is a solution to $F(\s X)$, $\tilde h_x(d):= h(x,d)$ defines an automorphism of $\s D$ for each $x$. Further $\tilde h_x=\tilde h_y$ whenever $x,y$ share some relation in $\s X$, so we get a solution to $\s X$ $H(h)(x)=(\tilde h_x\inv \circ h)(x)$.

For (2), the construction of $F(\s X)$ is identical to the Bulatov and Jeavons construction, but we use the Luzin--Novikov theorem to find $G$. Suppose we are given the following data from the definition of simple interpretation:
 
 \begin{enumerate}
 \item An onto function $c: A\rightarrow E$ with a simple definition for $A\subseteq D^n$: \[\bar x\in A\lra (\exists \bar z) \bigwedge_i \alpha_{A,i}(\bar x, \bar z)\]
 \item For each relation $R\in \s E$ a simple definition for $c\inv(R)$:
 \[\bar x_1,...,\bar x_k\in c\inv(R)\lra (\exists \bar z)\bigwedge_i \alpha_{R, i}(\bar x_1,...,\bar x_k, \bar z). \]
 \end{enumerate}
 we construct an instance $F(\s X)$ of $\s D$ as follows. For each variable $x\in X$ introduce tuples of variables $\bar y_x$ (with arity $n$ as in item 1 above) and $\bar z_{A, x}$ (with the same arity as $\bar z$ in the definition of item 1), and for each tuple $(x_1,...,x_n)\in R^{\s X}$ introduce tuples $\bar z_{R, x_1,...,x_n}$ (with the same arity as in the defintion in item 2). For each variable $x$ add in the relations
 \[\alpha_{A,i}(\bar y_x, \bar z_{A,x})\] and for each $(x_1,...,x_n)\in R^{\s X}$ add in relations
 \[\alpha_{R,i}(\bar y_{x_1},...,\bar y_{x_n}, \bar z_{R,x_1,...,x_n}).\] Given a solution $g$ to $F(\s X)$ we get a solution $G(g)$ to $\s X$ given by $G(g)(x)=c(g(\bar y_x))$ (where we apply $g$ coordinatewise to $\bar y_x$). And if we have a solution $g$ to $\s X$, we can get a solution to $F(\s X)$ by using the Lusin--Novikov theorem to choose values from $c\inv (g(x))$ for each $\bar y_z$ and to choose values for each $\bar z_{A, x}$ and $\bar z_{R, \bar x}$ from the witnesses to \[(\exists \bar z) \bigwedge_i \alpha_{A,i}(\bar y_x, \bar z)\quad\mbox{ and }\quad(\exists \bar z)\bigwedge_i \alpha_{R, i}(\bar y_{x_1},...,\bar y_{x_k}, \bar z). \]

\end{proof}

If $\s D$ has equality in its signature, then the distinction between simple and pp constructions collapses and we get the following:

\begin{cor}\label{cor:equality closure}
Suppose $\s D$ is a structure which includes equality as a relation and $\s E$ is pp constructible in $\s D$. If $\csp_B(\s D)$ is
 $\bp11$, effectivizable, or essentially classical then so too is $\csp_B(\s E)$.
\end{cor}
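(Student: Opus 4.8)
The plan is to bootstrap this from Theorem~\ref{thm:simple reduction} by first observing that, once $\s D$ comes equipped with equality, ``pp constructible in $\s D$'' and ``simply constructible in $\s D$'' are the same notion --- this is the ``collapse'' alluded to just before the statement. Granting that, $\s D$ simply constructs $\s E$, so Theorem~\ref{thm:simple reduction} supplies maps $F,G,H$ that are $\Delta^1_1$ in the codes and realise a Borel reduction of $\csp_B(\s E)$ to $\csp_B(\s D)$, and each of the three closure properties is then transferred by a short diagram chase.

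To carry out the collapse I would take a pp construction $\s D=\s E_1,\s E_2,\dots,\s E_n=\s E$ and show, by induction along the chain, that one may replace each $\s E_i$ by $\s E_i\cup\{(=)\}$ without breaking any link. For a homomorphic-equivalence link or a singleton-expansion-of-a-core link this is automatic: adjoining the relation $(=)$ changes neither the homomorphisms between two structures nor the endomorphism monoid of a structure, since every function preserves equality; in particular a core remains a core. For a pp-interpretation link, given the defining data --- a pp-definable $A$ in a power of $E_i$, a surjection $h\colon A\to E_{i+1}$ with pp-definable graph, and the pp-definable congruence $\theta=\ker h$ --- one notes that $h$ pulls the equality relation of $\s E_{i+1}$ back to exactly $\theta$, which is pp-definable, so the same data witnesses that $\s E_{i+1}\cup\{(=)\}$ is pp interpretable in $\s E_i\cup\{(=)\}$. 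In the chain so modified every structure carries equality, and over such a structure a pp formula (an existentially quantified conjunction of relation atoms and equalities) is literally a simple formula, the equalities being instances of the relation $(=)$; hence every pp-interpretation link is now a simple-interpretation link, and the chain exhibits $\s D$ as simply constructing $\s E\cup\{(=)\}$. Finally $\s E$ is a reduct of $\s E\cup\{(=)\}$, i.e.\ trivially simply definable in it, so appending one more link yields a simple construction of $\s E$ from $\s D$.

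With $F,G,H$ in hand, $\bp11$-ness transfers because clauses~(2) and (3) of Theorem~\ref{thm:simple reduction} give $\csp_B(\s E)=F^{-1}(\csp_B(\s D))$ on the space of codes and $F$ is Borel. For effectivizability: if $\s X$ is a $\Delta^1_1$ instance of $\s E$ with a Borel solution, then $F(\s X)$ is a $\Delta^1_1$ instance of $\s D$ with a Borel solution (apply $G$), hence has a $\Delta^1_1$ solution $h$ since $\csp_B(\s D)$ is effectivizable, and $H(h)$ is then a $\Delta^1_1$ solution of $\s X$. For essential classicality: if $\s X$ is a Borel instance of $\s E$ with some solution, then $F(\s X)$ is a Borel instance of $\s D$ with a solution --- this is the classical content of the construction in the proof of Theorem~\ref{thm:simple reduction}, where the passage from a solution of $\s X$ to a solution of $F(\s X)$ needs only a choice of witnesses, the Luzin--Novikov theorem being invoked merely to make that choice Borel --- so $F(\s X)$ has a Borel solution $h$, and $H(h)$ is a Borel solution of $\s X$.

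The step I expect to be the real work is the collapse above: one has to resist treating it as obvious, since the intermediate structures $\s E_i$ of a pp construction need not carry equality, and the induction must check that expanding all of them by $(=)$ leaves each of the three kinds of link intact --- the load-bearing point being that the quotient map in an interpretation pulls equality back to the (already pp-definable) congruence. Everything after that is routine bookkeeping, the one thing worth flagging being that in the ``essentially classical'' case one needs $F$ to preserve existence of classical, not merely Borel, solutions, which is exactly what the proof of Theorem~\ref{thm:simple reduction} delivers.
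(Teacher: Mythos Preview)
Your proposal is correct and follows the same route as the paper, which simply asserts that when $\s D$ carries equality the distinction between pp and simple constructions collapses and then invokes Theorem~\ref{thm:simple reduction}. You have supplied the details of that collapse (the induction along the chain, with the load-bearing observation that equality pulls back along an interpretation to the already pp-definable congruence) and of the transfer of the three properties, all of which the paper leaves implicit.
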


It follows that these classes all have algebraic characterizations when restricted to structures with equality, though the identities involved may be quite complex.

\section{Intractable CSPs} \label{section:intractable}

One of the consequences of the CSP dichotomy theorem is that (assuming P$\not=$NP) any structure with $\csp(\s D)$ complete for polynomial time reductions is in fact maximal for pp constructions. Using a lemma from the previous section and a lemma of Bulatov and Jeavons, we can show that, in fact, any such structure is maximal for simple constructions. This, along with Todorcevic and Vidnyanszky's $\bs12$-completeness theorem for 3-coloring \cite{TV}, gives a number of new projective complexity bounds.

\begin{lem}[Bulatov--Jeavons \cite{BJHS}] \label{lem: HS intractable}
If $\s D$ is intractable and includes all singleton unary predicates, then $\HS(\pol(\s D))$ contains a projection algebra.
\end{lem}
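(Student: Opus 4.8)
The plan is to combine Taylor's theorem with a descent from $\HSP$ to $\HS$ that exploits idempotence. First note that since $\s D$ includes every singleton unary predicate $U_a(x)\leftrightarrow x=a$, any $f\in\pol(\s D)$ must preserve each $U_a$, so $f(a,\dots,a)=a$; that is, $\bb A:=\pol(\s D)$ is idempotent. As $\s D$ is intractable, Theorem~\ref{thm:intractable equivalents} says $\bb A$ has no Taylor (equivalently, no weak near unanimity) term, and then Taylor's theorem \cite{Taylor}, as invoked in the proof of that theorem, provides a two-element algebra $\bb P$ in $\var(\bb A)=\HSP(\bb A)$ all of whose term operations are projections (if the projection algebra it supplies is larger, restrict to any two-element subalgebra, which is again a projection algebra since the operations are idempotent). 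Our job is to improve $\HSP(\bb A)$ to $\HS(\bb A)$ here.

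The first reduction makes the power finite. Pick $b_0,b_1$ in a subalgebra of a power $\bb A^{I}$ which some surjection $\phi$ sends onto the two elements of $\bb P$, and replace that subalgebra by $\bb B:=\langle b_0,b_1\rangle$; it is finite by local finiteness of $\var(\bb A)$ and still surjects onto $\bb P$. Since every element of $\bb B$ is a term $t(b_0,b_1)$ and is therefore determined coordinatewise by the pair of values of $b_0,b_1$ at that coordinate, and since this pair ranges over the finite set $S:=\{(b_0(i),b_1(i)):i\in I\}\subseteq A^{2}$, the assignment $t(b_0,b_1)\mapsto\bigl(s\mapsto t^{\bb A}(s)\bigr)$ is a well-defined embedding of $\bb B$ into $\bb A^{S}$. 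So from now on I may assume $\bb B\le\bb A^{n}$ with $n$ finite and $\phi\colon\bb B\twoheadrightarrow\bb P$.

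The hard part is eliminating the power. I would run a minimality argument: among all finite $\bb C$ that embed in some finite power $\bb A^{n}$ and admit a surjection onto a two-element projection algebra, pick one minimizing first $n$ and then $|\bb C|$, and assume toward a contradiction that $n\ge 2$. Write $\eta_i$ for the kernel of the $i$-th coordinate projection restricted to $\bb C$ (so $\bigcap_i\eta_i=0$) and $\theta$ for the kernel of the surjection onto $\bb P$. Since $\bb P$ is two-element it is subdirectly irreducible, so $\theta$ is completely meet-irreducible in the congruence lattice of $\bb C$. If $\eta_i\le\theta$ for some $i$, then $\bb P$ is already a quotient of $\pi_i(\bb C)\le\bb A$, a witness with exponent $1<n$, contradicting the minimality of $n$; so we may assume $\eta_i\not\le\theta$, hence $\theta\vee\eta_i>\theta$, for every $i$. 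If $\bb A$ generated a congruence-distributive variety this would already be a contradiction, since then $\theta=\theta\vee\bigcap_i\eta_i=\bigcap_i(\theta\vee\eta_i)$ would express $\theta$ as a meet of strictly larger congruences. In general, however, the congruence lattice of $\bb C$ need not be distributive, and ruling out this ``skew'' configuration is exactly where the structure theory of finite idempotent algebras enters: a two-element projection algebra is of type $\mathbf 1$ in the sense of tame congruence theory, and for finite idempotent algebras a type-$\mathbf 1$ quotient of a finite power must already occur as a quotient of a subalgebra. This last point is the substantive content of \cite{BJHS} (equivalently, it can be extracted from absorption theory), and in a complete write-up I would cite their argument rather than reprove it. The main obstacle, then, is precisely this general skew case: everything preceding it is bookkeeping, but removing the power without congruence-distributivity genuinely requires the finer theory of finite idempotent algebras.
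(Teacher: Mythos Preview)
Your proposal has a genuine gap at the crucial step. You correctly reduce to a finite power and identify that eliminating the power is the heart of the matter, but then you write that ``removing the power without congruence-distributivity genuinely requires the finer theory of finite idempotent algebras'' and propose to cite \cite{BJHS} for this. That is circular: the lemma you are asked to prove \emph{is} the Bulatov--Jeavons lemma, so deferring the key step to their argument is not a proof. Moreover, the machinery you reach for (tame congruence theory, type~$\mathbf 1$ covers, absorption) is unnecessary here.

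The paper's argument is short and elementary, and you should replace your congruence-lattice discussion with it. Given $\bb B\le\bb A^n$ and $f\colon\bb B\twoheadrightarrow\bb X$ onto a nontrivial projection algebra, for each $I\subseteq\{1,\dots,n\}$ and $\bar a\in A^{I}$ consider the slice $\bb B\res(I,\bar a):=\{b\in\bb B:\pi_I(b)=\bar a\}$. Choose $I$ \emph{maximal} so that $f$ is non-constant on some such slice; say $I=\{1,\dots,k\}$. Idempotence of $\bb A$ makes $\bb B':=\bb B\res(I,\bar a)$ a subalgebra (this is exactly where the singleton unary predicates are used), and $f(\bb B')$ is still a nontrivial projection algebra. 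Now look at coordinate $k+1$: by maximality of $I$, pinning any further value in that coordinate makes $f$ constant on the resulting slice, which means $f\res\bb B'$ factors through $\pi_{k+1}$. Hence $\bb A':=\pi_{k+1}(\bb B')$ is a subalgebra of a single copy of $\bb A$ that surjects onto a nontrivial projection algebra. This is the entire descent from $\HSP$ to $\HS$; no congruence-distributivity or tame congruence theory is needed.
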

\begin{proof}
This lemma appears in a seemingly unpublished technical report, so we give a sketch of their proof here. 

Let $\bb A=\pol(\s D)$ and let $\bb B\leq \bb A^n$ be a subalgebra with some homomorphism $f:\bb B\rightarrow \bb X$ onto a nontrivial projection algebra. 
For $I\subseteq \{1,...,n\}$ and $\bar a\in A^{I}$, let $\bb B\res(I,a)=\pi_{ I}=\bar a$. Fix $I$ maximal with $f$ not constant on $\bb B\res(I,\bar a)$. Without loss of generality, we may assume $I=\{1,...,k\}$ and $\bar a=(a_1,...,a_k)$.

Since $\s D$ contains all unary predicates, $\bb A$ is idempotent. Thus, $\bb B'=\bb B\res(I, \bar a)$ is a subalgebra of $\bb B$. And we have that $f(\bb B')$ is a nontrivial projection algebra.

Let $\bb A'=\pi_{k+1}(\bb B')$. If there is some $a\in \bb A'$ with $f$ is not constant on $\pi_{k+1}\inv(a)\cap \bb B'$, then $I$ is not maximal. So, $f$ factors through $\pi_{k+1}$ and a nontrivial subalgebra of $\bb X$ is an image of $\bb A'$ as well.
\end{proof}
\begin{cor}\label{cor: intractable simply constructs all}
If $\s D$ is intractable, $\s D$ simply constructs all structures.
\end{cor}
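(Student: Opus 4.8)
The plan is to reduce to the idempotent case, extract a projection algebra via Lemma~\ref{lem: HS intractable}, and then route everything through $3\mathrm{SAT}$, which behaves well for simple constructions.

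\emph{Reducing to a core with constants.} First replace $\s D$ by its core $\s D'$; this is a homomorphic equivalence, hence a legal step in a simple construction. Then adjoin the singleton unary predicates $U_a(x)\lra x=a$, one $a\in D'$ at a time, to form $\s D'':=\s D'\cup\{U_a:a\in D'\}$. Each intermediate structure is again a core: any endomorphism of it fixes every adjoined constant and restricts to an endomorphism of $\s D'$, hence is bijective. So each of these steps is a singleton expansion of a core, and $\s D$ simply constructs $\s D''$. Moreover $\s D''$ is still intractable: it pp-defines $\s D'$ (its reduct to the original signature), and $\s D'$ is homomorphically equivalent to $\s D$, so $\s D''$ pp-constructs $\s D$; since $\s D$ is intractable it pp-constructs every finite structure by Theorem~\ref{thm:intractable equivalents}, and hence so does $\s D''$.

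\emph{Landing on $3\mathrm{SAT}$.} Since $\s D''$ is intractable and contains all singleton predicates, Lemma~\ref{lem: HS intractable} puts a nontrivial projection algebra in $\HS(\pol(\s D''))$; restricting it to a two-element subalgebra (and using that $\HS$ absorbs further $S$, as $SH\subseteq HS$), we may take this to be $\pol(3\mathrm{SAT})$, the clone of projections on $\{0,1\}$. None of the relations $D_0,\dots,D_3$ of $3\mathrm{SAT}$ implies an equation---each is $\{0,1\}^3$ minus a single point, so it surjects onto every pair of coordinates---so Corollary~\ref{cor:HS thing} applies and $\s D''$ simply constructs $3\mathrm{SAT}$.

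\emph{Finishing.} It remains to check that $3\mathrm{SAT}$ simply constructs every finite structure. The key point is that $3\mathrm{SAT}$ already simply defines equality on $\{0,1\}$: $(x=y)\lra D_1(x,y,y)\wedge D_1(y,x,x)$, the two conjuncts being $x\le y$ and $y\le x$. Consequently the standard pp-interpretation of an arbitrary finite structure in $3\mathrm{SAT}$, obtained by binary-encoding its domain---the route by which $3\mathrm{SAT}$ pp-interprets every structure---can have each occurrence of equality replaced by this simple definition, making it a simple interpretation. Chaining the three steps, $\s D$ simply constructs every finite structure. The one delicate point is the equation-freeness hypothesis of Corollary~\ref{cor:HS thing}: we cannot feed it an arbitrary target, whose relations may force coordinates to coincide, which is exactly why we first reach the equation-free structure $3\mathrm{SAT}$ and then exploit that $3\mathrm{SAT}$---unlike a general intractable structure---simply defines its own equality relation.
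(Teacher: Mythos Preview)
Your proof is correct and follows the same route as the paper's: pass to the core with all singleton constants, invoke Lemma~\ref{lem: HS intractable} to land a projection algebra in $\HS(\pol(\s D''))$, use the equation-freeness of the $3\mathrm{SAT}$ relations together with Corollary~\ref{cor:HS thing} to get a simple construction of $3\mathrm{SAT}$, and finish by observing that $3\mathrm{SAT}$ simply defines equality so that its pp-constructions are simple. You supply a few details the paper leaves implicit---why the singleton-expanded core remains intractable, why the projection algebra may be taken to be two-element (via $SH\subseteq HS$), and an explicit formula $D_1(x,y,y)\wedge D_1(y,x,x)$ for equality---but the architecture is identical.
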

\begin{proof}
We may assume $\s D$ is a core with all singleton unary relations. Then, by the lemma above, $\pol(3\sat)\in \HS(\pol(\s D))$. Since $3\sat$ does not imply equations, by lemma \ref{lem:equation free}, $\s D$ simply constructs $3\sat$. And since $3\sat$ simply defines equality and is maximal for pp constructions, $\s E$ simply constructs all structures.
\end{proof}
 Since there is a $\bs12$-complete Borel CSP, we get the following:
\begin{thm}\label{thm:intractable}
If $\s D$ is intractable, then $\csp_B(\s D)$ is $\bs12$-complete.
\end{thm}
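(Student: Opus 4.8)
The plan is to combine the two results that immediately precede the statement: Corollary \ref{cor: intractable simply constructs all}, which says an intractable $\s D$ simply constructs every finite relational structure, and Theorem \ref{thm:simple reduction}, which says that if $\s D$ simply constructs $\s E$ then $\csp_B(\s E)$ Borel reduces to $\csp_B(\s D)$ via $\Delta^1_1$-in-the-codes maps. The only extra ingredient needed is a single known $\bs12$-complete Borel CSP to anchor the hardness half, together with the routine observation that $\csp_B(\s D)$ always \emph{sits inside} $\bs12$.

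First I would establish membership: $\csp_B(\s D)\in\bs12$. Given a code for a Borel structure $\s X$, the statement ``there exists a Borel homomorphism $f:\s X\to\s D$'' is, by the conventions of the appendix, of the form ``$\exists$ (a code for a Borel function) such that (a $\bp11$ condition asserting it is a homomorphism into the finite structure $\s D$)''; since $\s D$ is finite, checking the homomorphism condition for each relation is arithmetic in the codes, uniformly, so the whole statement is $\bs12$. (Strictly one should note that if there is a Borel solution there is one coded by a real, which is automatic.)

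Next I would do hardness. By the $\bs12$-completeness of Borel $3$-coloring --- equivalently $\csp_B(K_3)$ --- due to Todorcevic and Vidny\'anszky \cite{TV}, and more conveniently by the $\bs12$-completeness of Borel edge $3$-coloring proved in Appendix \ref{appendix:codings}, there is \emph{some} finite relational structure $\s E_0$ with $\csp_B(\s E_0)$ $\bs12$-complete; indeed $3\sat$ itself works once one has any $\bs12$-hard CSP in hand, since $3\sat$ pp constructs (hence simply constructs) every structure. Now since $\s D$ is intractable, Corollary \ref{cor: intractable simply constructs all} gives that $\s D$ simply constructs $\s E_0$. Applying Theorem \ref{thm:simple reduction} to this simple construction produces a Borel reduction (in fact $\Delta^1_1$ in the codes) from $\csp_B(\s E_0)$ to $\csp_B(\s D)$. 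Composing with the reduction witnessing $\bs12$-hardness of $\csp_B(\s E_0)$ shows $\csp_B(\s D)$ is $\bs12$-hard. Together with the membership step, $\csp_B(\s D)$ is $\bs12$-complete.

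The main obstacle is essentially bookkeeping rather than mathematics: verifying that the composite reduction really is a reduction of \emph{point classes in the codes}, i.e. that the maps $F$ from Theorem \ref{thm:simple reduction} interact correctly with whatever coding of Borel structures is fixed, and that ``has a Borel solution'' is preserved and reflected on the nose. The substantive content has already been extracted into Corollary \ref{cor: intractable simply constructs all} (via Lemma \ref{lem: HS intractable} and Lemma \ref{lem:equation free}) and into the existence of one $\bs12$-complete example; the proof of Theorem \ref{thm:intractable} is then just assembling these pieces, so it should be short.
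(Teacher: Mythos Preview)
Your proposal is correct and matches the paper's proof essentially verbatim: the paper cites Todorcevic--Vidny\'anszky for $\bs12$-completeness of $\csp_B(K_3)$, invokes Corollary~\ref{cor: intractable simply constructs all} to get that an intractable $\s D$ simply constructs $K_3$, and (implicitly via Theorem~\ref{thm:simple reduction}) concludes $\csp_B(K_3)$ Borel reduces to $\csp_B(\s D)$. Your inclusion of the $\bs12$ upper bound is a reasonable addition that the paper leaves implicit.
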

\begin{proof}

Todocevic and Vidny\'ansky showed that $\csp_B(K_3)$ is $\bs12$-complete \cite{TV}. By the above theorem, if $\s D$ is instractable, then $\csp_B(K_3)$ Borel reduces to $\csp_B(\s D).$
\end{proof}
\begin{cor}[P$\not=$NP]\label{cor: p np intractable}
If $\csp(\s D)$ is NP-complete, then $\csp_B(\s D)$ is $\bs12$-complete. 
\end{cor}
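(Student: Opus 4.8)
The plan is to read this off directly from Theorem~\ref{thm:intractable} together with the CSP Dichotomy Theorem~\ref{thm:CSP dichotomy theorem}. First I would argue the contrapositive of one fork of the dichotomy: if $\csp(\s D)$ is NP-complete then, under the hypothesis P$\neq$NP, it is not polynomial time solvable, and hence by Theorem~\ref{thm:CSP dichotomy theorem} there is \emph{no} polymorphism $f$ of $\s D$ satisfying $f(r,a,r,e)=f(a,r,e,a)$ for all $a,e,r$. In the terminology of Definition~\ref{dfn: tractable} this says exactly that $\s D$ is intractable, i.e.\ admits no Siggers term.

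Then I would invoke Theorem~\ref{thm:intractable} verbatim: an intractable $\s D$ has $\csp_B(\s D)$ that is $\bs12$-complete. This already delivers both halves of ``$\bs12$-complete'': $\bs12$-hardness comes from the chain Corollary~\ref{cor: intractable simply constructs all} $\to$ Theorem~\ref{thm:simple reduction} $\to$ the Todorcevic--Vidny\'anszky theorem that $\csp_B(K_3)$ is $\bs12$-complete, while membership of $\csp_B(\s D)$ in $\bs12$ is the routine observation (using the coding of Appendix~\ref{appendix:codings}) that ``$\s X$ admits a Borel homomorphism into the finite structure $\s D$'' is a $\bs12$ condition on a code for $\s X$.

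So there is essentially nothing to do beyond assembling these citations; the corollary is pure bookkeeping. The only place a hypothesis enters nontrivially is the single appeal to P$\neq$NP, which is needed solely to convert ``NP-complete'' into ``not polynomial time solvable'' so that the dichotomy theorem applies --- every other ingredient, namely Theorem~\ref{thm:intractable} and the algebraic fork of the dichotomy theorem, holds unconditionally. I do not expect any genuine obstacle here: all of the mathematical content has already been absorbed into Theorem~\ref{thm:intractable} and into the deep input of the CSP Dichotomy Theorem.
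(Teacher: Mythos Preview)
Your proposal is correct and matches the paper's approach exactly: the paper states the corollary immediately after Theorem~\ref{thm:intractable} with no separate proof, since it follows at once from that theorem together with the CSP Dichotomy Theorem under P$\neq$NP. Your unpacking of where the hypothesis P$\neq$NP enters is accurate and is precisely the intended reading.
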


In fact, since the instances from the Todorcevic--Vidnyanszky theorem are locally finite, these problems are $\bs12$-complete even when we restrict to locally finite instances. This can be improved to bounded degree by recent work of Brandt, Chang, Greb\'ik, Grunau, Rozho\v n, and Vidny\'anszky \cite{HomomorphismGraphs}. We can use this theorem to lift results from finitary complexity theory wholesale. For instance, the Hell--Ne\v set\v ril theorem \cite{HellNesetril} yields:

\begin{thm}\label{thm:hellnesetril}
For a simple graph $G$, the following are equivalent
\begin{enumerate}
    \item $G$ is bipartite
    \item $G$ is tractable
    \item $\csp_B(G)$ is effectivizable
    \item $\csp_B(G)$ is $\bp11$
    \item $\csp_B(G)$ is not $\bs12$-complete
\end{enumerate}
\end{thm}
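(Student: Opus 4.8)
The plan is to establish the chain of implications $(1)\Rightarrow(2)\Rightarrow(4)\Rightarrow(5)\Rightarrow(1)$, together with $(4)\Rightarrow(3)\Rightarrow(5)$, so that everything collapses to a single equivalence class. The equivalence $(1)\Leftrightarrow(2)$ is pure finitary CSP theory: a bipartite graph is homomorphically equivalent to $K_2$, and $K_2$ has the binary $\min$ (equivalently, it is an affine/totally symmetric structure) as a polymorphism, hence admits a Siggers term and is tractable; conversely, by the Hell--Ne\v set\v ril theorem \cite{HellNesetril} a non-bipartite simple graph $G$ has $\csp(G)$ NP-complete, so in particular $G$ is intractable (this direction does not even need $P\neq NP$, since NP-completeness of graph colouring is witnessed by a pp construction of $K_3$, or one can cite the algebraic sharpening that a non-bipartite core contains an odd cycle which pp-constructs $K_3$).

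For $(2)\Rightarrow(4)$: if $G$ is bipartite it is homomorphically equivalent to $K_2$, so $\csp_B(G)=\csp_B(K_2)$ by Theorem \ref{thm:simple reduction} applied to the homomorphic equivalence (or directly, since homomorphic equivalence is one of the moves in a simple construction and reductions go both ways). Then I must check $\csp_B(K_2)$ is $\bp11$: a Borel graph admits a Borel $2$-colouring iff it is acyclic-in-the-relevant-sense, more precisely iff it has no odd closed walk, and ``has an odd closed walk'' is an analytic, indeed Borel-checkable, obstruction — one can write the set of codes of Borel graphs with a Borel $2$-colouring as $\bp11$ by quantifying over the (countably many, by Luzin--Novikov / the fact that a locally countable Borel graph decomposes) witnessing structure, or simply cite that $\csp_B(K_2)$ is essentially classical since $K_2$ has totally symmetric polymorphisms of all arities (Theorem \ref{thm: classical intro} / Theorem \ref{thm:width1 implies classical}) and essentially classical CSPs have $\bp11$ solvability sets. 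In fact invoking Theorem \ref{thm: classical intro}: $K_2$ is width $1$, so $\csp_B(K_2)$ is essentially classical, hence by Corollary \ref{cor:classical implies effective} effectivizable, and essential classicality gives the $\bp11$ bound — this simultaneously yields $(2)\Rightarrow(3)$ and $(2)\Rightarrow(4)$.

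The implication $(4)\Rightarrow(5)$ is immediate since a $\bp11$ set cannot be $\bs12$-complete (a $\bs12$-complete set is properly $\bs12$, in particular not $\bp11$). Similarly $(3)\Rightarrow(5)$: if $\csp_B(G)$ were $\bs12$-complete it could not be effectivizable, because effectivizability forces a $\bp11$ (indeed hyperarithmetic-in-the-code) upper bound on the solvability set, contradicting $\bs12$-hardness. Finally $(5)\Rightarrow(1)$ is the contrapositive of $(\neg 1)\Rightarrow(\neg 5)$: if $G$ is not bipartite then by $(1)\Leftrightarrow(2)$ it is intractable, so Theorem \ref{thm:intractable} gives that $\csp_B(G)$ is $\bs12$-complete, i.e. $\neg(5)$.

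The only genuinely non-routine point is pinning down the upper bound for the bipartite case — i.e. that $\csp_B(K_2)$ is $\bp11$ and effectivizable — and the cleanest route is to cite the characterization of essentially classical structures (Theorem \ref{thm: classical intro}, Corollary \ref{cor:classical implies effective}) rather than argue $2$-colouring by hand; everything else is bookkeeping with Theorem \ref{thm:intractable} and Theorem \ref{thm:simple reduction}. I expect the main obstacle a careless writer would hit is conflating ``not $\bs12$-complete'' with ``$\bp11$'' without justifying the upper bound, so the proof should make the bipartite-side upper bound explicit before closing the loop.
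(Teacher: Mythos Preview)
Your overall chain of implications is fine, but the justification you give for the bipartite upper bound contains a genuine error: $K_2$ is \emph{not} width $1$, does \emph{not} have binary $\min$ as a polymorphism, and does \emph{not} have totally symmetric polymorphisms of arbitrarily high arity. For the first claim, observe that an odd cycle is an arc-consistent instance of $K_2$ (every vertex gets $U_x=\{0,1\}$ and the projections onto either coordinate of the edge relation are surjective) yet has no homomorphism to $K_2$. For the second, $\min$ applied to the columns of the edge pair $((0,1),(1,0))$ returns $(0,0)$, which is not an edge. For the third, a totally symmetric $n$-ary operation $f$ on $\{0,1\}$ depends only on the number $k$ of $1$'s among its inputs; preserving the edge relation forces $f(k\text{ ones})\neq f((n-k)\text{ ones})$ for all $k$, which is impossible at $k=n/2$ when $n$ is even. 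Consequently $\csp_B(K_2)$ is not essentially classical --- for instance, the Borel graph of an irrational circle rotation is bipartite but has no Borel (or even Lebesgue-measurable) $2$-colouring, by ergodicity of the doubled rotation --- so your appeal to Theorem~\ref{thm: classical intro} and Corollary~\ref{cor:classical implies effective} fails.

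The paper repairs exactly this step by a different polymorphism: $K_2$ admits the majority function (the dual discriminator on two elements) as a polymorphism, since $\maj(1-a_1,1-a_2,1-a_3)=1-\maj(a_1,a_2,a_3)$. Theorem~\ref{thm: dd effective} then gives $(1)\Rightarrow(3)$ directly, after which your remaining implications $(3)\Rightarrow(4)\Rightarrow(5)$ (clear), $(5)\Rightarrow(2)$ (Theorem~\ref{thm:intractable}), and $(1)\Leftrightarrow(2)$ (Hell--Ne\v set\v ril) close the loop exactly as you describe.
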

\begin{proof}
$(1)\Rightarrow (2)$ is an unpublished result of Louveau and also follows from Theorem \ref{thm: dd effective}; $(3)\Rightarrow (4)\Rightarrow (5)$ are all clear; $(5)\Rightarrow (2)$ is the theorem above. The equivalence of $(1)$ and $(2)$ is essentially the classical Hell--Ne\v setril theorem. For completeness, we sketch a proof below.

If $G$ is bipartite, the core of $G$ is $K_2$, so $G$ is tractable. If $G$ is tractable, then it admits a cyclic polymorphism of all large enough prime arities. If $G$ is not bipartite, then there is some closed walk of large prime length $p$, say $(x_1,...,x_p)$. If $c$ is a cyclic polymorphism of arity $p$, then there is a loop at $c(x_1,...,x_p)=c(x_2,...,x_p, x_1)$, but $G$ is supposed to be a simple graph.
\end{proof}

A more general version for so-called smooth digraphs is given in Corollary \ref{cor: smooth digraphs}. We can also generalize Todocevic and Vidny\'anszky's theorem on graph coloring to hypergraphs

\begin{cor} \label{cor: hypergraphs}
For any arity $k$ and any number $n\geq 2$, the problem of Borel $n$-coloring $k$-ary hypergraphs is $\bs12$-complete.
\end{cor}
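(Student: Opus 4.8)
The plan is to realize Borel $n$-coloring of $k$-ary hypergraphs as $\csp_B(\s D)$ for a suitable structure $\s D$ and show that $\s D$ is intractable, so that Theorem~\ref{thm:intractable} applies. Concretely, take $\s D$ to have domain $\{1,\dots,n\}$ with the single $k$-ary relation $\nae_{n,k}=\{(a_1,\dots,a_k)\in\{1,\dots,n\}^k : \neg(a_1=a_2=\cdots=a_k)\}$, the ``not-all-equal'' relation. An instance of $\s D$ is exactly a $k$-ary hypergraph (the tuples in $\nae_{n,k}^{\s X}$ are the hyperedges), and a homomorphism $\s X\to\s D$ is exactly a proper $n$-coloring: each hyperedge must be mapped to a non-constant tuple. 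This identification is visibly Borel-faithful under any reasonable coding, so $\csp_B(\s D)$ is the Borel $n$-coloring problem for $k$-ary hypergraphs up to Borel bi-reducibility.

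It remains to check $\s D$ is intractable, i.e.\ has no Siggers (equivalently, no Taylor, no WNU, no cyclic) polymorphism. The cleanest route is to use Theorem~\ref{thm:intractable equivalents}: a tractable $\s D$ would have a cyclic polymorphism $C$ of some large prime arity $p$. Apply $C$ to the $p$ ``cyclic shift'' columns of a fixed non-constant tuple: take $e=(1,2,2,\dots,2)\in\nae_{n,k}^{\s D}$ (using $n\ge 2$), and let $e_1,\dots,e_p$ be the $k$ coordinate-columns of the $p\times k$ matrix whose $i$-th row is the cyclic shift $(e_{i},e_{i+1},\dots,e_{i-1})$ of $e$; since $C$ is a polymorphism, $(C(e_1),\dots,C(e_k))\in\nae_{n,k}^{\s D}$, but cyclic invariance forces $C$ to take the same value on all $k$ columns (they are cyclic shifts of one another in the row index), contradicting non-all-equality. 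An even shorter argument: $\s D$ with a single $\nae$ relation and $n\ge2,k\ge2$ pp-constructs $K_3$, or one observes directly that the two-element substructure on $\{1,2\}$ with $\nae_{2,k}$ restricted is pp-equivalent to $\nae$ of Example~(5) after Definition~\ref{dfn: polymorphisms} (via identifying variables), whose polymorphism clone is generated by negation and hence contains no WNU; by Theorem~\ref{thm:correspondences}(1) and the fact that pp-definability only shrinks the polymorphism clone, $\s D$ inherits intractability. Either way, $\s D$ is intractable.

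With intractability in hand, Theorem~\ref{thm:intractable} gives that $\csp_B(\s D)$ is $\bs12$-complete, and by the identification above so is Borel $n$-coloring of $k$-ary hypergraphs. I would also remark, as the paper does for graphs, that the reduction from $\csp_B(K_3)$ produced by Corollary~\ref{cor: intractable simply constructs all} and Theorem~\ref{thm:simple reduction} preserves local finiteness (and bounded degree), so the completeness holds already for locally finite, even bounded-degree, hypergraph instances.

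The only genuinely delicate point is the coding bookkeeping: one must confirm that ``Borel structure coding a $k$-ary hypergraph'' and ``Borel structure coding an instance of $\s D$'' are interchangeable by a $\Delta^1_1$-in-the-codes map in both directions, and that Borel $n$-colorings correspond to Borel homomorphisms. This is routine given the coding in Appendix~\ref{appendix:codings}, and is anyway subsumed by the Borel reduction machinery of Theorem~\ref{thm:simple reduction} once the structure $\s D$ is fixed; the mathematical content is entirely in the (easy) verification that a single $\nae$ relation has no Taylor polymorphism.
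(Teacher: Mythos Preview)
Your strategy matches the paper's exactly: identify Borel $n$-coloring of $k$-ary hypergraphs with $\csp_B(\s D)$ for $\s D=(\{1,\dots,n\},\nae_{n,k})$, check that $\s D$ is intractable, and invoke Theorem~\ref{thm:intractable}. The paper's own proof is a single sentence which simply asserts that these structures are ``known to be intractable'' and applies the theorem; the coding remarks and local-finiteness observations you add are correct but not in the paper's proof.

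Where you go further and try to \emph{prove} intractability, the cyclic-polymorphism argument is wrong. In your $p\times k$ matrix with $M[i][j]=e_{(i+j)\bmod k}$, the columns are \emph{not} cyclic shifts of one another in $\Z/p$: shifting column $j$ down by one in the row index gives entry $e_{((i+1)\bmod p+j)\bmod k}$, which agrees with column $j{+}1$ for $i<p-1$ but fails at the wraparound $i=p-1$ unless $k\mid p$. (For a sanity check, take $k=3$, $p=5$, $e=(1,2,2)$: column $0$ is $(1,2,2,1,2)$ and column $1$ is $(2,2,1,2,2)$, and no cyclic shift of the former equals the latter.) So cyclicity of $C$ does not force the column values to coincide, and indeed for $(n,k)=(2,2)$ the structure is $K_2$, which \emph{does} admit a cyclic polymorphism ($x_1+\cdots+x_p\bmod 2$ for odd $p$) and is tractable by Corollary~\ref{thm:hellnesetril}. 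Your second route is the one that works, but it should be phrased via pp-definition rather than ``substructure'': $\nae_{n,k}(x,y,y,\dots,y)$ pp-defines $K_n$, intractable for $n\ge 3$; and for $n=2$, $\nae_{2,k}(x,y,z,z,\dots,z)$ pp-defines $\nae_{2,3}$ (Example~(5)), intractable, provided $k\ge 3$. In particular the corollary as literally stated needs the exclusion of $(n,k)=(2,2)$.
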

\begin{proof}
The hypergraph coloring problem is equivalent to the $\csp(\s D)$ for $\s D=(\{1,...,n\}, \{(x_1,...,x_k): \neg(x_1=...=x_k)\})$. These are known to be intractable. %citation
\end{proof}

And, we can find some exotic examples of $\bs12$-complete problems.

\begin{cor}\label{cor:special triad}
The directed graph shown in Figure \ref{figure:specialtriad} has a $\bs12$-complete Borel CSP.

\begin{figure}[h]
    \centering

\begin{tikzpicture}[rotate=90]
\tikzstyle{vtx}=[circle, draw=black, fill=black, inner sep=0pt, minimum size=4pt]
\node[vtx] (0) at (0,0) {};
\node[vtx] (1) at (-1,1) {};
\node[vtx] (2) at (-1,2) {};
\node[vtx] (3) at (-1,3) {};
\node[vtx] (4) at (-2,2) {};
\node[vtx] (5) at (-2,3) {};
\node[vtx] (6) at (-2,4) {};
\node[vtx] (7) at (-3,3) {};
\node[vtx] (8) at (-3,2) {};
\node[vtx] (9) at (-3,1) {};
\node[vtx] (10) at (-3,0) {};
\node[vtx] (11) at (1,1) {};
\node[vtx] (12) at (1,2) {};
\node[vtx] (13) at (2,1) {};
\node[vtx] (14) at (2,2) {};
\node[vtx] (15) at (2,3) {};
\node[vtx] (16) at (2,4) {};
\node[vtx] (17) at (3,3) {};
\node[vtx] (18) at (3,2) {};
\node[vtx] (19) at (3,1) {};
\node[vtx] (20) at (3,0) {};
\node[vtx] (21) at (-1,-1) {};
\node[vtx] (22) at (-1,-2) {};
\node[vtx] (23) at (-1,-3) {};
\node[vtx] (24) at (0,-2) {};
\node[vtx] (25) at (0,-1) {};
\node[vtx] (26) at (1,-2) {};
\node[vtx] (27) at (1,-3) {};
\node[vtx] (28) at (1,-4) {};
\node[vtx] (29) at (2,-3) {};
\node[vtx] (30) at (2,-2) {};
\node[vtx] (31) at (2,-1) {};
\node[vtx] (32) at (2,0) {};

\path[draw=black, thick, ->] (0) -- (1) ;
\draw[draw=black, thick, ->] (1) -- (2) ;
\draw[draw=black, thick,  ->] (2) -- (3);
\draw[draw=black, thick,  ->] (4) -- (3);
\draw[draw=black, thick,  ->] (4) -- (5);
\draw[draw=black, thick,  ->] (5)-- (6);
\draw[draw=black, thick,  ->] (7) -- (6);
\draw[draw=black, thick,  ->] (8) -- (7);
\draw[draw=black, thick,  ->] (9) -- (8);
\draw[draw=black, thick,  ->] (10)--(9);

\draw[draw=black, thick,  ->] (0) -- (11);
\draw[draw=black, thick,  ->] (11) -- (12);
\draw[draw=black, thick,  ->](13) -- (12);
\draw[draw=black, thick,  ->] (13) -- (14);
\draw[draw=black, thick,  ->] (14) -- (15);
\draw[draw=black, thick,  ->] (15) -- (16);
\draw[draw=black, thick,  ->] (17) -- (16);
\draw[draw=black, thick,  ->] (18) -- (17);
\draw[draw=black, thick,  ->](19) -- (18);
\draw[draw=black, thick,  ->] (20) --(19);

\draw[draw=black, thick,  ->] (0) --(21);
\draw[draw=black, thick,  ->](21) -- (22);
\draw[draw=black, thick,  ->](22) -- (23);
\draw[draw=black, thick,  ->](24)--(23);
\draw[draw=black, thick,  ->](25)--(24);
\draw[draw=black, thick,  ->](25)--(26);
\draw[draw=black, thick,  ->](26)--(27);
\draw[draw=black, thick,  ->](27)--(28);
\draw[draw=black, thick,  ->](29)--(28);
\draw[draw=black, thick,  ->](30)--(29);
\draw[draw=black, thick,  ->](31)--(30);
\draw[draw=black, thick,  ->](32)--(31);
\end{tikzpicture}
\caption{An intractable directed graph}\label{figure:specialtriad}
\end{figure}
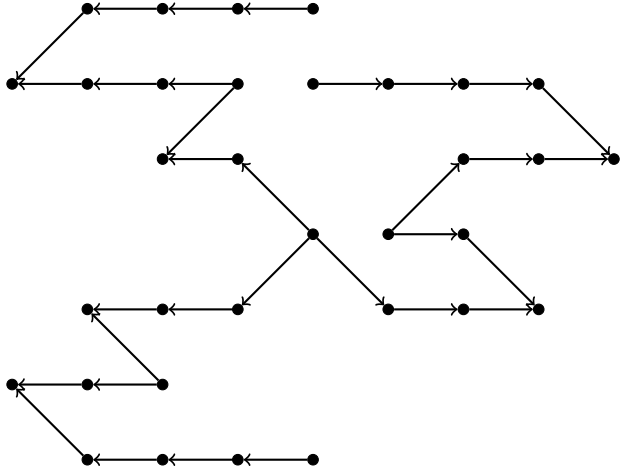
\end{cor}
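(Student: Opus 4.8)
The plan is to deduce this directly from Theorem~\ref{thm:intractable}: since that theorem asserts $\csp_B(\s D)$ is $\bs12$-complete for every intractable $\s D$, it suffices to check that the digraph $\s T$ drawn in Figure~\ref{figure:specialtriad} is intractable, i.e.\ admits no Siggers polymorphism. I would first record the elementary structural observations: $\s T$ is an orientation of a tree with exactly one vertex of degree $3$ and three ``arms'' (oriented paths) emanating from it --- that is, a \emph{triad} --- and every oriented tree is a core, so no homomorphic-equivalence or singleton-expansion bookkeeping is needed and Theorem~\ref{thm:intractable} applies verbatim.

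The substantive input is the (known) fact that this particular triad has no Taylor polymorphism; by Theorem~\ref{thm:intractable equivalents} this is equivalent to having no Siggers polymorphism, and, via the CSP Dichotomy Theorem (Theorem~\ref{thm:CSP dichotomy theorem}), to $\csp(\s T)$ being NP-complete. Concrete triads with NP-complete CSP have been exhibited in the literature on CSPs over oriented trees and digraphs, and the displayed $\s T$ is chosen to be one of them (this is what the figure caption records). Granting that, Theorem~\ref{thm:intractable} immediately yields that $\csp_B(\s T)$ is $\bs12$-complete, which is the claim. Equivalently, one may unwind this through Corollary~\ref{cor: intractable simply constructs all} (so $\s T$ simply constructs $K_3$), Theorem~\ref{thm:simple reduction} (so $\csp_B(K_3)$ Borel-reduces to $\csp_B(\s T)$), and the Todorcevic--Vidny\'anszky $\bs12$-completeness of $\csp_B(K_3)$ --- but this is precisely the proof of Theorem~\ref{thm:intractable}.

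The only real work, and the step I expect to be the main obstacle, is verifying intractability of $\s T$ if one does not wish to quote it: one must show that $\pol(\s T)$ satisfies no nontrivial height $1$ identities, equivalently (by Corollary~\ref{cor: intractable simply constructs all} and Theorem~\ref{thm:intractable equivalents}) that $\s T$ pp-constructs every finite structure, equivalently that $\HS(\pol(\s T))$ contains a projection algebra. The standard route is an indicator/absorption analysis: one uses the ``level'' function on $\s T$ (the height of a walk measured from the branch vertex) together with the differing lengths and orientation patterns of the three arms to force any putative weak near-unanimity --- or prime-arity cyclic --- polymorphism to collapse onto a coordinate, thereby exhibiting the required projection algebra. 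This combinatorial bookkeeping is entirely specific to the chosen arm data, which is why the corollary is phrased as an application of, rather than a reproof of, the machinery developed above.
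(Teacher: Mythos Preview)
Your approach is exactly the paper's: cite that this triad is intractable (the paper attributes this to Barto, Kozik, Mar\'oti, and Niven \cite{SpecialTriads}) and invoke Theorem~\ref{thm:intractable}. One small correction to your side remark: it is not true that every oriented tree is a core (e.g., the in-star $0\to 1\leftarrow 2$ retracts onto a single edge), but this is irrelevant here since Theorem~\ref{thm:intractable} requires only intractability, not that $\s T$ be a core.
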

\begin{proof}
Barto, Kozik, Mar\'oti, and Niven showed that this structure is intractable \cite{SpecialTriads}.
\end{proof}

We give a last example relating to the theory of local problems (also called locally checkable labelling problems or distributed constant time problems, see \cite{LocalSurvey}).

\begin{dfn}\label{dfn: graph with parallels}
A \textbf{graph with parallels} is a graph $G$ with a specified set of pairs of directed edges $P$. An \textbf{orientation} of a graph with parallels $(G,P)$ is an orientation $o$ of $G$ so that $e\in o$ iff $f\in o$ whenever $(e,f)\in P$. And orientation is \textbf{balanced} if the indegree of every vertex is the same as the outdegree.
\end{dfn}

\begin{cor}\label{cor:balanced orientations}
The set of Borel 4-regular graphs with parallels which admit a Borel balanced orientation is $\bs12$-complete.
\end{cor}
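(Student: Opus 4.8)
The plan is to exhibit the problem of Borel balanced orientations of 4-regular graphs with parallels as (the Borel CSP of) a structure that is intractable, and then invoke Theorem~\ref{thm:intractable}. First I would encode a graph with parallels and a choice of balanced orientation as an instance-and-homomorphism pair. The natural template $\s D$ will have as its domain the possible "states" at a vertex — in a 4-regular graph a balanced orientation means exactly two of the four incident half-edges point in and two point out, so the domain should record, for an oriented edge incident to a vertex, whether it points toward or away from that vertex, together with enough bookkeeping to glue consistently along edges and along parallel pairs. Concretely I would take a relation that says "these four half-edges at a vertex form an in/in/out/out pattern," a binary relation tying the two ends of an honest edge (one end sees it as outgoing iff the other sees it as incoming), and a binary relation tying a parallel pair (both ends agree on orientation). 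One checks that a Borel graph with parallels, presented as a Borel structure in this signature, admits a Borel balanced orientation iff it admits a Borel homomorphism into $\s D$; and conversely every instance of $\csp_B(\s D)$ (with appropriate arc-consistency / degree conditions) is, up to the $\Delta^1_1$-in-the-codes manipulations already licensed by Theorem~\ref{thm:simple reduction}, of this form.

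The core of the argument is then to show $\s D$ is intractable, i.e.\ admits no Siggers polymorphism. Here the cleanest route is to recognize that balanced orientations are governed by linear algebra over $\F_2$ (or over $\Z$, but the $\F_2$ reduct already suffices for intractability): the indegree-equals-outdegree condition at each vertex, read mod $2$, together with the edge- and parallel-consistency equations, is a system of linear equations, so $\s D$ pp-constructs — and, since the relevant relations imply no equations, by Corollary~\ref{cor:HS thing}/Corollary~\ref{cor:not bdd width} simply constructs — the structure $\F_2(3)$. By Theorem~\ref{thm:bddwidth characterization} (or directly Theorem~\ref{thm:intractable equivalents}(4)) $\F_2(3)$ is intractable, hence so is $\s D$. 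The reference to Barto--Kozik--Máróti--Niven is not needed; instead this is the linear-algebra fork. Alternatively one argues directly that any Siggers polymorphism of $\s D$ would restrict to a nontrivial idempotent term operation on the "in/out" $\F_2$-part that commutes with the pattern relation, forcing it to be an affine operation $\sum a_i x_i$ with $\sum a_i = 1$ over $\F_2$, and then checks the Siggers identity $f(r,a,r,e)=f(a,r,e,a)$ fails for such operations over $\F_2$ — a short finite computation.

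Putting it together: $\s D$ is intractable, so by Theorem~\ref{thm:intractable} $\csp_B(\s D)$ is $\bs12$-complete; and since Borel balanced orientation of 4-regular graphs with parallels is (Borel, indeed $\Delta^1_1$-in-the-codes) bi-reducible with $\csp_B(\s D)$ via the encoding above and Theorem~\ref{thm:simple reduction}, it too is $\bs12$-complete. I expect the main obstacle to be the encoding bookkeeping: one must make sure the template $\s D$ is small and finite (finitely many relations), that the 4-regularity is genuinely captured — rather than, say, a related but distinct "every vertex is in some allowed state" condition that might accidentally be tractable — and that the reduction is degree-preserving so that 4-regular instances correspond to bounded-degree instances of $\s D$, which matters only cosmetically here but is the kind of thing that is easy to get subtly wrong. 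The linear-algebra identification of the hard core, and hence intractability, is by contrast routine once the template is pinned down.
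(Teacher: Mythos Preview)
Your argument has a fatal error: $\F_2(3)$ is \emph{tractable}, not intractable. Linear systems over $\F_2$ are solved in polynomial time by Gaussian elimination, and algebraically $\pol(\F_2(3))$ contains the Mal'cev operation $x+y+z$, which is Taylor (indeed Siggers: take $f(x_1,x_2,x_3,x_4)=x_1+x_2+x_4$ over $\F_2$ and check $f(r,a,r,e)=r+a+e=f(a,r,e,a)$). You have conflated ``not bounded width'' (Theorem~\ref{thm:bddwidth characterization}) with ``intractable'' (Theorem~\ref{thm:intractable equivalents}); these are different, and $\F_2(3)$ is the canonical example separating them. So neither your pp-construction route nor your ``direct'' Siggers computation establishes intractability of the template. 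Relatedly, the mod-$2$ relaxation of the balance condition is vacuous: for four incident edges each contributing $\pm 1$, the sum is always even, so reading mod $2$ loses the constraint entirely.

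The paper's proof is structurally different and avoids both pitfalls. It does not exhibit balanced orientation as $\csp_B(\s D)$ for some template; instead it gives a direct Borel reduction from $\csp_B(\F_3^\times(3))$ --- the Boolean structure on $\{-1,1\}$ with relations $a_1z_1+a_2z_2+a_3z_3+a_4=0$ over $\F_3$ --- to balanced orientation. The point is that for four $\pm 1$ summands, ``sum $=0$ in $\Z$'' is equivalent to ``sum $\equiv 0 \pmod 3$'', so mod $3$ (unlike mod $2$) exactly captures balance; and $\F_3^\times(3)$ is genuinely intractable by Schaefer. The reduction builds, for each variable and each constraint of the instance, a copy of the $4$-regular tree $T_4$, and uses the parallels to synchronize orientations across copies so as to encode the linear equations. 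This also sidesteps your bi-reducibility worry: the paper never needs to turn an arbitrary instance of some template into a $4$-regular graph with parallels, only instances of the specific structure $\F_3^\times(3)$, for which the tree gadget suffices. Your approach could in principle be repaired --- the ``exactly two of four'' relation on $\{0,1\}$ is in fact intractable by Schaefer --- but you would still owe the nontrivial reverse reduction from arbitrary $\csp_B(\s D)$-instances to honest $4$-regular graphs with parallels, which is where the real work lies.
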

\begin{proof}

Given a vertex $v$ incident to (undirected) edges $e_1,e_2,e_3,e_4$ and and orientation $o$, let $f(e_i,v,o)$ be $1$ if $e_i$ is oriented toward $v$ and $-1$ otherwise. Then $o$ is balanced if and only if $\sum_i f(e_i,v,o)=0$ for all $v$, and this is true even if the sum is taken mod 3. This gives us a way to code 3 variable linear equations with nonzero solutions, a problem which turns out to be NP-complete.

Consider $\bb F_3^\times(3)$ the structure on $\{-1,1\}$ equipped with all relations of the form
\[a_1z_1+a_2z_2+a_3z_3+a_4=0\] for $a_1,a_2,a_3,a_4\in \{-1,1\}.$ It is straightforward to check that $\bb F_3^\times(3)$ is intractable (using, for instance, Schaefer's theorem). Write $R_{a_1,a_2,a_3,a_4}$ for the relation $\{(x,y,z): a_1x+a_2y+a_3z+a_4=0\}$. %double check

Given an instance $\s X$ of $\bb F_3^\times(3)$, define a graph with parallels $f(\s X)$ as follows. For a directed edge $(x,y)$, write $-1\cdot(x,y)$ for $(y,x)$. Let $T_4$ be the infinite 4-regular tree, and fix some vertex $v_0\in T_4$ incident to (directed) edges $e_i=(v_0,v_i)$ for $i=1,2,3,4$. Then the graph of $f(\s X)$ is a large disjoint union of copies of $T_4$:
\[V=\{(v,x): v\in T_4,x\in X\mbox{ or }x\in R^{\s X}\mbox{ for some }R\}\]
\[(v,x) E(v',x'):\lra v E v'\mbox{ and }x=x'.\]

And, the parallels of $f(\s X)$ are as follows:

\begin{enumerate} 
\item For $e=(x_1,x_2,x_3)\in R_{a_1,...,a_4}^{\s X}$, $((v_0, x_i), (v_1,x_i))$ is parallel to $a_i\cdot ((v_0,e), (v_i,e))$,

\item For $e\in R^{\s X}_{a_1,...,a_4}$ and $e'\in R^{\s X}_{b_1,...,b_4}$, $((v_0, e), (v_4,e))$ is parallel to $(a_4b_4)\cdot ((v_0, e'),(v_4,e'))$
\end{enumerate}

Suppose $o$ is a balanced orientation of $f(\s X)$. Because of the parallels in item (2), for $e\in R_{a_1,...,a_4}$, whether $((v_0, e),(v_4,e))$ is in $o$ or not only depends on $a_4$. Possibly replacing $o$ with its opposite orientation, we may assume $((v_0,e), (v_4,e))\in o$ if and only if $a_4=1$. For any edge $e$ in the graph of $f(\s X)$ define 
\[\tilde g(e)=\left\{\begin{array}{ll} 1 & e\in o \\ -1 & \mbox{otherwise} \end{array}\right.\] Then, I claim that $\s X$ has a solution given by $g(x)=\tilde g((v_0,x),(v_1,x))$. Indeed, if $e=(x_1,x_2,x_3)\in R^{\s X}_{a_1,a_2,a_3,a_4}$, then 
by the parallels in item (1) above $g(x_i)=a_i\cdot \tilde g((v_0, e),(v_i,e))$. By our normalization, $\tilde g((v_0, e),(v_4, e))=a_4$. And, since $o$ is balanced, $\sum_i g((v_0, e), (v_i,e))=0$.

Conversely, if $\s X$ has a solution $g$, we get a partial balanced orientation $o$ of $f(\s X)$ by setting 
\[((v_0,x),(v_1,x))\in o\lra g(x)=1\]
for $x\in \s X$, and for $e=(x_1,x_2,x_3)\in R_{a_1,...,a_4}^{\s X}$ and $i=1,2,3$,
\[((v_0,e),(v_i, e))\in o\lra a_ig(x_i)=1\]
and
\[((v_0,e),(v_2,e))\in o\lra a_4=1.\]

Then $o$ assigns an orientation to every edge which occurs in the set of parallels of $f(\s X)$, and every vertex is incident to either 1 or 4 edges assigned an orientation by $0$. Then since the graph of $f(\s X)$ is acyclic and has a smooth connectedness relation, $o$ extends to a balanced orientation on all of $f(\s X)$.
\end{proof}

Of course all of the complexity in the above construction is carried by the parallels. We would like to compute the complexity of graphs with balanced orientations, or of Borel local problems in general, but these are not in general exactly equivalent to CSPs. Indeed every NP-language is equivalent to a local problem.

\begin{prb}\label{prb: local}
Is the Borel version of every NP-complete local problem $\bs12$-complete?
\end{prb}

%problem: converse? complexity of bs12 structures?

\section{Essentially classical CSPs} \label{section:classical}

From the point of view of descriptive set theory, essentially classical structures are trivial. Interestingly, though, these structures turn out to be exactly the width 1 structures from computer science, i.e. those solved by arc-consistency. We first give a characterization of arc-consistency which is convenient for reflection arguments.

\begin{dfn}\label{dfn:1minimal}
For an instance $\s X$ of $\s D$, a say $f:X\rightarrow \s P(D)$ is \textbf{closed} if, 
\[(\forall (x_1,...,x_n)\in R^{\s X}, i\leq n, a\in D)\; \bigwedge_{e\in \pi_i\inv(a)} e\not\in R\sm\Pi_j f(x_j) \Rightarrow a\in f(x_i).\] The \textbf{closure} of $f$ is function $\overline f$ obtained by iteratively adding points to each $f(x)$ to satisfy the above condition. 

A \textbf{good witness} for $\s X$ is a closed function $f$ so that, for all $x$, $f(x)\not=D$.
\end{dfn}

So, a structure is width 1 iff every structure with a good witness has a solution. And, $\s X$ has a good witness iff the closure of $\emptyset$ is a good witness. Recall Theorem \ref{thm:width1 characterization} says that a structure is width 1 if and only if it has a totally symmetric polymorphism of arbitrarily high arity.

\begin{thm}\label{thm:width1 implies classical}
If $\s D$ is a finite width 1 relational structure, then $\csp_B(\s D)$ is essentially classical and effectivizable.
\end{thm}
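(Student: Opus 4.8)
The plan is to use the characterization of width 1 via totally symmetric polymorphisms (Theorem \ref{thm:width1 characterization}) together with a reflection argument, following the standard template for proving essential classicality in Borel combinatorics. Concretely, I would fix a Borel instance $\s X$ of $\s D$ that admits a (classical) solution and show it admits a Borel solution; effectivizability will come out of the same argument by reading it lightface.

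First I would reduce to the case where $\s X$ has a good witness in the sense of Definition \ref{dfn:1minimal}. Since $\s X$ has a solution $h:X\to D$, the constant-$\emptyset$ function closes up to a function $f$ with $h(x)\in f(x)$ for all $x$, so in particular $f(x)\neq\emptyset$; and $f(x)\neq D$ is forced if we start the closure process from $\emptyset$ — here one uses that width 1 structures have a good witness exactly when a solution exists, together with the fact that $\s X$ is solvable. The map $x\mapsto \overline{\emptyset}(x)=f(x)$ is Borel because the closure operator is an iterated Borel construction (a transfinite iteration of a monotone Borel-definable step, which stabilizes at a countable stage since $\pow{D}$ is finite). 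So without loss of generality we may pass to the instance $\s X'$ obtained by adding the unary constraints $U_x := f(x)$, which is an arc-consistent (indeed "1-minimal") Borel instance, still solvable, and it suffices to find a Borel solution to $\s X'$.

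Now the heart of the matter: I would use the totally symmetric polymorphisms of $\s D$ to build a Borel solution out of the combinatorics of $\s X'$. The key observation is that if $g_1,\dots,g_m$ are any (not necessarily Borel) solutions of $\s X'$ and $t$ is an $m$-ary totally symmetric polymorphism, then $x\mapsto t(g_1(x),\dots,g_m(x))$ depends only on the \emph{set} $\{g_1(x),\dots,g_m(x)\}\subseteq U_x$. Since each $U_x$ is finite and $\s X'$ is 1-minimal, for each $x$ the set of values taken by solutions at $x$ is all of $U_x$; using totally symmetric polymorphisms of arity $\geq |D|$ one can then argue that the "canonical" assignment $x\mapsto t(\text{enumeration of }U_x)$ is itself a solution of $\s X'$ — this is precisely the Dalmau--Pearson argument that 1-minimal instances of width 1 structures are solvable, but carried out so that the resulting solution is defined by a \emph{fixed local rule} applied to the Borel data $x\mapsto U_x$. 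Because $U_x$ is a Borel function of $x$ and $t$ is a fixed finite operation, the solution $x\mapsto t(U_x)$ is Borel. Running the whole argument with lightface ($\Delta^1_1$) instances and noting that every step — closure, passing to $\s X'$, applying the fixed polymorphism — is effective gives a $\Delta^1_1$ solution, hence effectivizability; this also yields Corollary \ref{cor:classical implies effective} in this direction.

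The main obstacle I anticipate is the middle step: verifying that a 1-minimal instance of a width 1 structure admits a solution of the special form "apply a fixed totally symmetric polymorphism coordinatewise to the lists $U_x$", as opposed to merely admitting \emph{some} solution. One has to choose the arity of the totally symmetric polymorphism large enough (at least $|D|$, and one must be careful about how the finitely many elements of $U_x$ are fed in with multiplicity), and check that the resulting assignment respects every relation $R^{\s X'}$ — this uses arc-consistency to produce, for each constraint tuple, witnessing elements of $R^{\s D}$ and then applies the polymorphism to them, invoking total symmetry to see the output is independent of the choice of witnesses. Making this uniform in $x$ (so the final map is Borel with no choices) is the crux; everything else — Borelness of closure, the reduction to 1-minimal instances, and the effective bookkeeping — is routine.
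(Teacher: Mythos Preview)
Your overall strategy matches the paper's: obtain a Borel/$\Delta^1_1$ good witness $x\mapsto U_x$, then apply a totally symmetric polymorphism of large arity pointwise to an enumeration of $U_x$. The second half of your sketch (the Dalmau--Pearson style argument that $x\mapsto t(\text{list of }U_x)$ is a homomorphism) is exactly what the paper does, and your anticipated ``main obstacle'' is not where the difficulty lies.

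The genuine gap is in your first step. You claim that $x\mapsto\overline{\emptyset}(x)$ is Borel because the closure operator is a transfinite iteration of a Borel-definable step that stabilizes at a countable stage. But a single application of the closure operator already involves an existential quantifier over the \emph{other} variables in a constraint tuple --- a projection of a Borel relation --- so one step takes Borel input to merely $\Sigma^1_1$ output. Finiteness of $\pow{D}$ tells you the iteration stabilizes at stage $\omega$, but each $f_n$ is only analytic, so $\overline{\emptyset}$ is $\Sigma^1_1$ with no reason to be Borel. Consequently the sets $\{x:U_x=S\}$ are only differences of analytic sets, and your proposed solution $x\mapsto t(U_x)$ need not be Borel. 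The same issue blocks the lightface reading you suggest for effectivizability.

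The paper repairs exactly this point with the first reflection theorem: the predicate ``$\overline f$ is a good witness'' is $\Pi^1_1$ on $\Sigma^1_1$, so one alternates taking closures (which stay $\Sigma^1_1$) with reflecting to $\Delta^1_1$ supersets, and the union of the resulting $\Delta^1_1$ chain is a $\Delta^1_1$ closed good witness $\tilde f$. You mention ``a reflection argument'' in your opening sentence, but the body of your proposal does not carry one out; that is precisely the missing ingredient.

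A minor secondary point: arity $\geq |D|$ is not enough for the polymorphism. For a $k$-ary constraint one needs, for each coordinate $i$ and each $a\in U_{x_i}$, a column witnessing $a$ in position $i$; this can require up to $k\cdot|D|$ columns, so the paper takes $n\geq |D|\cdot N$ where $N$ is the maximal arity in $\s D$. (For example, with $R=\{(0,0,1),(0,1,0),(1,0,0)\}$ and all $U_{x_i}=\{0,1\}$, no two columns from $R$ give every row the full set $\{0,1\}$.)
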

\begin{proof}

Suppose $\s X$ is an arc-consistent $\Delta^1_1$ instance of $\s X$. We first show that $\s X$ has a $\Delta^1_1$ 1-minimal witness, then we mimic the proof that $\s X$ has a solution to get a $\Delta^1_1$ solution.

Write $\Phi(f)$ to mean that $\overline f$ is a good witness for $\s X$. Note that, if $f$ is $\Sigma^1_1$, so is $\bar f$. Also $\Phi$ is $\Pi^1_1$ on $\Sigma^1_1$, so the first reflection theorem says any $\Sigma^1_1$ set satisfying $\Phi$ is contained in a $\Delta^1_1$ set satisfying $\Phi$. Since taking the closure and applying the reflection theorem can be done uniformly in the codes we have an increasing $\Delta^1_1$ sequence of sets
\[f_0=\emptyset\subseteq \overline f_0\subseteq f_1 \subseteq \overline f_1\subseteq  f_2...\] where each $A_i$ is $\Delta^1_1$ and satisfies $\Phi$. Then $\tilde f:=\bigcup_i f_i$ is a $\Delta^1_1$ good witness for $\s X$.

 Let $N$ be the largest arity of a relation in $\s D$ and let $T$ be a totally symmetric polymorphism of $\s D$ of arity $n\geq |\s D|\times N$. By the Lusin--Novikov theorem, there is some $f: \s X\rightarrow \s D^{n}$ so that \[D\sm \tilde f(x) =\{f(x)_1,...,f(x)_n\}.\] I claim that $T\circ f$ is a solution to $\s X$. To see this, suppose $(x_1,...,x_k)\in R^{\s X}$, let $M$ be an $k\times n$ matrix whose columns are in $R$ and whose rows enumerate $U_{x_i}$, and let $\sigma_i$ be the $i^{th}$ row of $M$. This possible since $n\geq |D|\times k$ and $\tilde f$ is a good witness. Then by total symmetry $T\circ f(x_i)=T(\sigma_i)$ and since $T$ is a polymorphism, ${(T\circ f(x_1),..., T\circ f(x_k))\in R.}$
\end{proof}

The converse requires a little more work. A theorem of Feder and Vardi says that a structure is width 1 if and only if it admits a basis of acyclic structures. So, if $\s D$ is not width 1, there is some unsolvable instance $\s X$ where every acyclic lift of $\s X$ has a solution. We sharpen this a bit to find an instance $\s X$ with a point $x$ so that any acyclic lift of $\s X$ has a solution, but there is an acyclic lift which has a solution that is constant on the fiber over $x$. Then a modification of the $G_0$ construction gives an acyclic lift of $\s X$ with no Borel solution.

\begin{dfn}\label{dfn: acyclic}
A \textbf{simple path} in $\s X$ is a path $(x_1, (e_1,R_1),...,(e_{n-1}, R_{n-1}), x_n)$ so that $(x_1, (e_1, R_1), x_2, (e_2, R_1),..., x_{n-1}, (e_n, R_n))$ is an injective sequence. 

A \textbf{cycle} is a simple path $(x_1, (e_1, R_1), x_2, (e_2, R_2),..., x_n)$ with $x_1=x_n$ A structure is $\textbf{acyclic}$ if it does not contain any cycles. 

A \textbf{lift} of $\s X$ is a structure $\s Y$ with a homomorphism $f: \s Y \rightarrow \s X$.
\end{dfn}

Note that if we impose any unary constraints on an acyclic structure it remains acyclic. The following lemma is essentially equivalent to tree duality for width 1 structures \cite{FederVardi}. We include a proof for completeness

\begin{lem}  \label{lem:tree duality}
An instance $\s X$ of $\s D$ is arc-consistent if and only if any acyclic lift of $\s X$ has a solution.
\end{lem}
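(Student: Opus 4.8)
The plan is to prove both directions of the equivalence by analyzing how unary constraints propagate along paths, using the fact that acyclicity lets us chase witnesses without collision.

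For the ``only if'' direction, suppose $\s X$ is arc-consistent, witnessed by pp-definable predicates $U_x \subseteq D$ satisfying $\pi_i(U_{x_1}\times\cdots\times U_{x_n}\cap R^{\s D}) = U_{x_i}$ for every constraint $(x_1,\dots,x_n)\in R^{\s X}$. Let $f:\s Y\to\s X$ be an acyclic lift. I would build a solution $h:\s Y\to\s D$ by choosing, independently in each connected component of $\s Y$, a root vertex $y_0$, assigning $h(y_0)$ to be any element of $U_{f(y_0)}$ (nonempty by arc-consistency), and then extending along the tree. The key point: whenever we process a constraint $(y_1,\dots,y_n)\in R^{\s Y}$ in which exactly one variable $y_j$ is so-far unassigned (which is the situation in an acyclic structure processed from a root), arc-consistency guarantees that the values $h(y_i)\in U_{f(y_i)}$ already chosen for $i\neq j$ extend to a tuple in $R^{\s D}$, and we pick $h(y_j)\in U_{f(y_j)}$ accordingly. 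Acyclicity ensures no constraint ever involves two or more unassigned variables at the moment of processing and that no conflict can arise, so a transfinite/inductive extension over the tree order yields a well-defined homomorphism into $\s D$; since $f$ is a homomorphism, $h$ respects all relations of $\s X$ pulled back, hence is a genuine solution of $\s Y$.

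For the ``if'' direction, suppose $\s X$ is not arc-consistent. Run the arc-consistency propagation: define $U_x$ to be the $\subseteq$-largest family of pp-definable unary predicates closed under the projection condition (equivalently, the output of the closure operator from Definition \ref{dfn:1minimal} starting from $f(x)=D$), and by hypothesis some $U_{x^*} = \emptyset$. I want to produce an acyclic lift with no solution. The idea is that a value $d$ gets deleted from $U_x$ precisely because there is a finite ``obstruction tree'' rooted at $x$ witnessing that no solution of $\s X$ can send $x$ to $d$: $d$ was removed because of some constraint $R(x,y_1,\dots,y_{n-1})$ with the property that for every extension of $d$ to a tuple of $R^{\s D}$, some coordinate value has already been deleted from the corresponding $U_{y_i}$, and we recurse on those deletions. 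Unwinding this recursion gives a finite acyclic lift $\s Y$ (a tree of constraints, with distinct vertices at each node since we are free to take a fresh copy) together with a root $y_0$ mapping to $x^*$ such that $\s Y$ has no homomorphism to $\s D$ at all: any such homomorphism would, by induction up the tree, be forced to assign $y_0$ a value in $U_{x^*}=\emptyset$. Formalizing this requires an induction on the ``deletion rank'' (the stage at which a value is removed) and a careful bookkeeping that taking fresh vertex copies at each recursive call keeps the lift acyclic while preserving that it is a lift of $\s X$ via the obvious projection.

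The main obstacle is the ``if'' direction: making precise the obstruction-tree construction and, in particular, verifying that the recursion terminates (it does, because deletion rank strictly decreases) and that one can always refresh vertices to avoid creating cycles without destroying the lift property. Everything else is routine tree-chasing, and it is worth noting this lemma is essentially tree duality for width $1$ structures (\cite{FederVardi}), so the content is in presenting it in the self-contained form needed later.
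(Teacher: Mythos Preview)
Your backward direction is essentially the paper's argument: it too runs the arc-consistency algorithm, tracking the sets $U^i_x$ stage by stage, and builds by induction on the deletion stage an acyclic lift $\s Y^i_x$ (by taking fresh disjoint copies at each step) with a root whose possible values are exactly $U^i_x$. Your ``deletion rank'' is their stage index, and the refreshing-of-vertices observation is exactly how the paper keeps the lift acyclic. So that half is fine and matches the paper.

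The forward direction has a real error, though a repairable one. You claim that when processing an acyclic $\s Y$ from a root, each constraint is encountered with \emph{exactly one variable unassigned}, and that arc-consistency lets you extend the $n-1$ already-chosen values to a full tuple in $R^{\s D}$. Both claims are false for arity $\ge 3$. In an acyclic hypergraph processed from a root, when you first reach a constraint tuple you have exactly one variable \emph{assigned} (the entry point) and the remaining $n-1$ variables \emph{unassigned}; and arc-consistency, by definition $\pi_i\bigl(\prod_j U_{x_j}\cap R^{\s D}\bigr)=U_{x_i}$, says precisely that any single value in $U_{x_i}$ extends to a full tuple in $R^{\s D}\cap\prod_j U_{x_j}$, i.e.\ it gives one-to-all extension, not all-but-one-to-one. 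The latter can genuinely fail: take $D=\{0,1\}$, $R=\{(0,0,0),(1,1,1),(0,1,0)\}$, $U_i=D$; this is arc-consistent, but $(1,0,\cdot)$ has no completion. The paper's argument makes exactly this point: if $A$ is the connected set of already-assigned vertices and a constraint meets $A$, acyclicity forces it to meet $A$ in a single vertex, and arc-consistency then produces values for all the remaining coordinates simultaneously. Swap your ``assigned'' and ``unassigned'' and invoke arc-consistency in the correct direction, and your proof goes through.
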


\begin{proof}
First suppose that $\s X$ is arc-consistent as witnessed by predicates $U_x$, $U_{x_0}(a)$, $f:\s Y\rightarrow \s X$ is an acyclic lift of $\s X$. If $A\subseteq Y$ is connected, i.e. there is a simple path between any two points in $A$, and $R(y_1,...,y_n)$ holds with some $y_i\in A$ and $y_j\not\in A$, then in fact $\{y_1,...,y_n\}$ contains only one element of $A$ and no other relations of $\s Y$ meet $A\cup\{y_1,...,y_n\}$ at some element of $\{y_1,...,y_n\}$. Otherwise would be able to find a cycle in $\s Y.$ So, by arc-consistency, any partial solution to $\s Y$ with connected domain has an extension and a total solution must exist.

For the other direction, we build acyclic structures which encode the steps in the obvious algorithm for checking arc-consistency. Fix a linear order on pairs $(x,e, R)$ where $x$ is a coordinate of $e$ and $e\in R^{\s X}$. Say $(x,e,R)$ is bad for $\ip{U_x: x\in \s X}$ if $x$ is the $i^{th}$ coordinate of $e=(x_1,...,x_n)$ and $U_{x}\not=\pi_i(R\cap U_{x_1}\times...\times U_{x_n})$. For $i\in\N$ and $x\in \s X$ define $U^i_x$ inductively as follows:
\begin{enumerate}
    \item $U^0_X=D$ for all $x$
    \item If there is no $(x,e, R)$ bad for $\ip{U^i_x: x\in \s X}$ set $U^{i+1}_x=U^i_x$ for all $x$
    \item If $(x,e,R)$ is the least bad triple set $U^{i+1}_{x'}=U^i_x$ for $x\not=x'$ and set $U^{i+1}_x=\pi_i(R\cap U_{x_1}\times...\times U_{x_n}).$
\end{enumerate} By construction, $\s X$ is not arc-consistent if and only if $U^i_x=\emptyset$ for some $i$ and some $x$. We will show by induction that there are acyclic lifts $f^i_x: \s Y^{i}_{x}\rightarrow \s X$ and points $y^i_x\in (f^i_x)\inv(x)$ so that 
\[\{g(y^i_x): g\mbox{ is a solution to }\s Y^i_x\}=U^i_x.\]

For the base case, let $\s Y^0_x$ and have domain $\{y^i_x\}$, have $R^{\s Y^0_x}$ empty if $R$ has arity greater than 1, and $U(y^0_x)\lra U(x)$ for all unary relations, and set $f^i_x(y^i_x)=x.$ If $i$ is as in case $(2)$ of the induction or if $x$ is not in the least bad triple at step $i$, set $\s Y^{i+1}_x=\s Y^{i+1}_x$. Otherwise, suppose $(x, (x_1,...,x_n), R)$ is the least bad tuple at step $i$, $x=x_k$, and define
\[\s Y^{i+1}_x= \bigsqcup_{j=1}^n \s Y^i_{x_j}\]
with relations
\[R^{\s Y^{i+1}_x}=\{(y^i_{x_1},...,y^i_{x_n})\}\cup \bigsqcup_{j=1}^n \s R^{\s Y^i_{x_j}}\]
and $S^{\s Y^{i=1}_x}$ is just the union of the $S$ relations from each $\s Y^i_{x_j}$ for any relation $S$ besides $R$. Also, put $f^{i+1}_x(y)=f^i_{x_j}(y)$ if $y\in \s Y^i_{x_j}$ and put $y^{i+1}_x=y^i_x$. Then, $g$ is a solution $\s Y^{i+1}_x$ if and only if $g\res \s Y^i_{x_j}$ is a solution to $\s Y^i_{x_j}$ for each $j$ and $(g(y^i_{x_1}),...,g(y^i_{x_n}))\in R$. So by the inductive hypothesis
\begin{align*} 
\{g(y^{i+1}_x): g\mbox{ is a solution to }\s Y^{i+1}_x\} & = \pi_k (R\cap \Pi_j U^i_{x_j}) \\ & = U^{i+1}_{x}
\end{align*}

In particular, if $\s X$ is not arc-consistent, then $U^i_x=\emptyset$ for some $i,x$, so some acyclic lift of $\s X$ has no solution.
\end{proof}

To make sure the lift we construct in the next lemma is acyclic, we will need to introduce some new, simply definable relations. This is no issue since, whenever $\s D$ is width 1 or essentially classical, so is any structure which is simply definable in $\s D$. 

\begin{dfn}\label{dfn:simple closure}
For a structure $\s D$, let $\pf{\s D}$ be the structure with the same domain as $\s D$ equipped with every relation simply definable in $\s D$.
\end{dfn}

Note that $\pf{\s D}$ will not be finite even if $\s D$ is finite. But since it is simply definable in $\s D$, most theorems about finite structures will apply.

\begin{lem}\label{lem: sharper tree duality}
If there is an instance of $\s D$ which is arc-consistent but not cycle-consistent, then there is a finite arc-consistent instance $\s X$ of $\pf{\s D}$ with a finite acyclic lift $f:\s Y\rightarrow \s X$ and some $x\in\s X$ so that $\s Y$ has no solution which is constant on $f\inv(x)$.
\end{lem}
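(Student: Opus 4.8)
The plan is to compress the failure of cycle-consistency into a small ``inequality'' gadget inside $\pf{\s D}$ and then take the obvious two-point acyclic lift of it. First I would unpack the hypothesis: fix an arc-consistent instance $\s I$ of $\s D$ with pp-definable witnesses $U_x$, a closed path $P=(x_1,(e_1,R_1),x_2,\dots,x_n)$ with $x_n=x_1$ and coordinates $(j_1,k_1,\dots,j_{n-1},k_{n-1})$, and an element $a^*\in U_{x_1}$ admitting no routing around $P$. Each $U_{x_i}$ is unary, hence implies no equation and so is simply definable in $\s D$ by Lemma~\ref{lem:equation free}; each link $\pi_{j_i,k_i}R_i^{\s D}$ is an existentially quantified atomic formula, hence also simply definable. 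Composing the links through the $U_{x_i}$'s yields a binary relation
\[
T(u,w):\lra(\exists a_2,\dots,a_{n-1})\Big[u=a_1,\;w=a_n,\;\bigwedge_{1\le i\le n-1}\big(U_{x_i}(a_i)\wedge \pi_{j_i,k_i}R_i^{\s D}(a_i,a_{i+1})\big)\Big]
\]
which is simply definable in $\s D$, so $T$ is a relation of $\pf{\s D}$; moreover $T\subseteq U_{x_1}\times U_{x_1}$, arc-consistency of $\s I$ forces $\pi_1(T)=\pi_2(T)=U_{x_1}$, and $(a^*,a^*)\notin T$ by the choice of $a^*$.

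The crux --- and the step I expect to be the main obstacle --- is to refine $T$ to a genuine ``inequality'': a nonempty $V\in\pf{\s D}$ and a binary $N\in\pf{\s D}$ with $\pi_1(N\cap V{\times}V)=\pi_2(N\cap V{\times}V)=V$ but $(a,a)\notin N$ for all $a\in V$. The set $L=\{a\in U_{x_1}:(a,a)\in T\}$ of ``self-loops'' of $T$ is a proper subset of $U_{x_1}$ (it misses $a^*$) and, since $T$ and the $U_{x_i}$ are automorphism-invariant, $L$ misses the whole $\aut(\s D)$-orbit of $a^*$. A natural attempt is therefore to pass to the core of $\s D$ so that orbits become simply definable via Lemma~\ref{lem:=c}, and then restrict $T$ to a union of orbits disjoint from $L$ on which it is still total on both sides. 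The delicate point is keeping this restriction total and nonempty: in general one may have to replace $P$ by a concatenation of it with itself (to put $a^*$ inside a larger loop), choose the witness instance $\s I$ carefully, and feed the relations of $\s D$ itself --- which are available in $\pf{\s D}$ --- into the construction. (For $\csp(2\sat)$, for example, the gadget one wants is $(\neq)=D_0\cap D_2$, visible among the relations of $\s D$ but not realized as the composition around any single closed path, so genuinely using $\s D$'s relations rather than just $\s I$ seems unavoidable.) This is essentially a sharpening of the Feder--Vardi fact that width $1$ structures have acyclic bases, and I expect it to be where most of the work lies.

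Granting the gadget $(N,V)$, the lemma follows quickly. Let $\s X$ be the instance of $\pf{\s D}$ on a single variable $x$ with constraints $V(x)$ and $N(x,x)$; the map $x\mapsto V$ is an arc-consistency witness, so $\s X$ is arc-consistent, while $\s X$ has no solution since one would give $a\in V$ with $(a,a)\in N$. Let $\s Y$ be the lift with variables $y_0,y_1$, constraints $V(y_0),V(y_1),N(y_0,y_1)$, and $f(y_0)=f(y_1)=x$; then $\s Y$ is acyclic, so by Lemma~\ref{lem:tree duality} (or directly, since $N\cap V{\times}V\ne\emptyset$) it has a solution $g$, necessarily with $(g(y_0),g(y_1))\in N\cap V{\times}V$. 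Such a $g$ cannot be constant on $f^{-1}(x)=\{y_0,y_1\}$, since that would give $a:=g(y_0)=g(y_1)\in V$ with $(a,a)\in N$. Taking $x$ as the distinguished point completes the proof.
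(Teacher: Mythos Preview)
Your proposal contains a genuine gap at precisely the step you flag as the crux: the passage from the path relation $T$ (subdirect in $U_{x_1}^2$, missing the single diagonal point $(a^*,a^*)$) to a pair $(N,V)$ in $\pf{\s D}$ with $N\cap V^2$ subdirect in $V^2$ and disjoint from the \emph{entire} diagonal. The ideas you sketch do not close it: the loop set $L=\{a:(a,a)\in T\}$ is simply definable but $U_{x_1}\setminus L$ generally is not; restricting to an $\aut(\s D)$-orbit of $a^*$ (even after passing to the core) can destroy subdirectness; and concatenating $P$ with itself may introduce new diagonal elements rather than eliminate old ones. You are in effect asserting that every non--width-1 $\s D$ admits a simply definable loopless smooth digraph on a simply definable nonempty set, and that structural claim would itself require a proof of roughly the strength of the lemma you are trying to establish.

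The paper sidesteps this entirely by not compressing the obstruction to a single binary gadget. It observes that the unary predicate $U_P$ (``can be routed around the closed path $P$'') is simply definable, and runs the cycle-consistency refinement until $\s X$ is still arc-consistent but $\s X\cup\{U_P\}$ is not. Tree duality (the previous lemma) then yields an acyclic lift $\s Y\to\s X\cup\{U_P\}$ with no solution. To remove the artificial $U_P$ constraints, each $y\in U_P^{\s Y}$ is augmented by a fresh chain of new variables $z_{2,y},\dots,z_{n,y}$ carrying the link constraints of the walk along $P$, with $f'(z_{k,y})=x_k$; this keeps the lift acyclic and makes it a lift of $\s X$ itself. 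Since $x_n=x_1$, the far end $z_{n,y}$ lands in $(f')^{-1}(x_1)$ alongside $y$, so any solution $g$ constant on that fiber has $g(z_{n,y})=g(y)$ and the chain certifies $g(y)\in U_P$; thus $g$ restricts to a solution of $\s Y$, a contradiction. The resulting $\s X$ and $\s Y'$ may be large, but no inequality gadget is needed.
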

\begin{proof}

Similar to the arc-consistency algorithm sketched above, we can test cycle-consistency of an instance $\s X$ with arc-consistency witnesses $U_x$ by iteratively going through each closed path $P=(x_1, (e_1, R_1), ..., x_n, (e_n, R_n), x_1)$ with coordinates $(j_1, k_1,...,j_n, k_n)$, imposing a new unary constraint $U_P(x_1)$ on $x_1$ with
\[U_P(a_1):\lra (\exists a_2, a_3,...,a_n)\; \bigwedge_k \left( U_{x_k}(a_k) \wedge \pi_{i_k,j_k} R_k(a_k,a_{k+1})\right),\] and then refining the witnesses to arc-consistency. Stopping this process one step early we can assume $\s X$ is arc-consistent but $\s X\cup \{U_P\}$ is not arc-consistent.

By the previous lemma, there is an acyclic lift $f:\s Y\rightarrow\s X\cup{U_P}$ with no solution. We can convert $\s Y$ into an acyclic lift $f': \s Y'\rightarrow \s X$ with no solution which is constant on $f\inv(x_1)$ as follows. Note that, since $U_P^{\s X}=\{x_1\}$, $U_P^{\s Y}\subseteq f\inv(x)$. For each $y\in U_P^{\s Y}$ introduce new variables, $z_{2,y}$, $z_{3,y},...,z_{n,y}$, and impose constraints $\pi_{i_k,j_k}R_k(z_{k,y}, z_{k+1,y})$, $U_{x_1}(y)$, and $U_{x_k}(z_{k})$. And, extend $f$ to $f'$ by setting $f(z_{k,y})=x_k$ for each new variable $z_{k,y}$. Then $\s Y'$ is an acyclic lift of $\s X$. If $g$ is a solution to $\s Y'$ which is constant on $f\inv(x)$, then whenever $y\in U_{P}^{\s Y}$, $g(z_{2,y}),...,g(z_{n, y})$ witness that $g(y)\in U_P$, so $g$ restricts to a solution to $\s Y$, which is a contradiction.
\end{proof}

Now with $\s X$ and $\s Y$ as above, a simple modification of the $G_0$ construction gives an acyclic lift $\s Y_0$ of $\s X$ which contains many copies of $\s Y$ so that any Borel (in fact any Baire measurable) map $g: \s Y\rightarrow \s D$ must be constant on the fiber of $y$ in some copy of $\s Y$.

\begin{thm}\label{thm: classical implies width1}
If $\s D$ is finite and not width 1, then there is a Borel instance of $\s D$ with a solution but no Baire measurable solution.
\end{thm}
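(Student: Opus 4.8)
The plan is to combine the combinatorial input from Lemma~\ref{lem: sharper tree duality} with a $G_0$-style construction to manufacture an acyclic Borel instance with no Baire-measurable solution. First I would reduce to the case where $\s D$ is replaced by $\pf{\s D}$: since $\pf{\s D}$ is simply definable in $\s D$, a Borel instance of $\pf{\s D}$ with a solution but no Baire measurable solution pulls back, via the reduction maps $F,G,H$ of Theorem~\ref{thm:simple reduction} (which are $\Delta^1_1$ in the codes, in particular Borel, and preserve Baire measurability since preimages of Baire measurable sets under Borel maps are Baire measurable), to such an instance of $\s D$. Since $\s D$ is not width $1$, by Theorem~\ref{thm:width1 characterization} it lacks totally symmetric polymorphisms of high arity, hence (via the characterization of arc- versus cycle-consistency, cf.\ the discussion around Definitions~\ref{dfn: arc consistent} and \ref{dfn:cycle consistent}) there is an instance of $\s D$ that is arc-consistent but not cycle-consistent. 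Lemma~\ref{lem: sharper tree duality} then hands us a finite arc-consistent instance $\s X$ of $\pf{\s D}$, a finite acyclic lift $f:\s Y\rightarrow\s X$, and a point $x\in\s X$ such that (a)~every acyclic lift of $\s X$ has a solution (by arc-consistency and Lemma~\ref{lem:tree duality}), but (b)~$\s Y$ has no solution constant on the fiber $f\inv(x)$.

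The heart of the argument is the $G_0$-construction. I would build an acyclic Borel instance $\s Y_0$ lifting $\s X$ as follows: take a countable dense sequence $(s_n)$ of finite binary strings as in the usual $G_0$ definition, fix the space $\cantor$, and for each $n$ glue in, over the "$G_0$-edge" joining $s_n{}^\smallfrown 0{}^\smallfrown z$ and $s_n{}^\smallfrown 1{}^\smallfrown z$, a fresh disjoint copy of $\s Y$ whose distinguished fiber $f\inv(x)$ is identified with the two-point set $\{s_n{}^\smallfrown 0{}^\smallfrown z,\, s_n{}^\smallfrown 1{}^\smallfrown z\}$ (with all other vertices of that copy being private new points living "above" this edge). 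Acyclicity of each copy of $\s Y$, together with the fact that the copies only meet along these $G_0$-edges and that the $G_0$-graph itself is acyclic, gives that $\s Y_0$ is acyclic; hence by (a) and Lemma~\ref{lem:tree duality} it has a solution, and one should check that a Borel solution exists as well (this is automatic from acyclicity since $\s Y_0$ has a smooth connectedness relation, so one can solve component-by-component Borel-measurably), or at any rate a solution exists. Now suppose $g:\s Y_0\rightarrow\s D$ were a Baire measurable solution. Its restriction to the "base" $\cantor$ is a Baire measurable map $\cantor\to D$, hence constant on a comeager set $C$; by the usual $G_0$-ergodicity argument (density of the $(s_n)$), $C$ contains both endpoints of some $G_0$-edge $\{s_n{}^\smallfrown 0{}^\smallfrown z,\,s_n{}^\smallfrown 1{}^\smallfrown z\}$ with the same $g$-value. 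But then $g$ restricted to the copy of $\s Y$ glued over that edge is a solution to $\s Y$ that is constant on $f\inv(x)$, contradicting (b).

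The step I expect to be the main obstacle is making the gluing genuinely acyclic and genuinely Borel at the same time. Each copy of $\s Y$ contributes its internal (finite) relations, and these copies are indexed by the uncountably many $G_0$-edges; one must lay out the "private" vertices of each copy — e.g.\ as $\cantor\times(Y\sm f\inv(x))$ suitably re-coordinatized so that the copy over the edge $(s_n{}^\smallfrown 0{}^\smallfrown z,\,s_n{}^\smallfrown 1{}^\smallfrown z)$ uses points indexed by $(n,z)$ — and verify that the resulting relations are Borel and that no cycle is introduced either within a copy (immediate, as $\s Y$ is acyclic) or by the interaction of several copies with the $G_0$-edges (here one uses that distinct copies share no vertices except possibly the shared endpoints $s_n{}^\smallfrown i{}^\smallfrown z$, and that the $G_0$-graph is itself acyclic, so a closed simple path would have to be confined to one copy). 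A secondary technical point is confirming that "no Baire measurable solution" genuinely follows: one needs that a Baire measurable $g$ is continuous on a comeager set and then invokes that for comeagerly many $z$ the two branches $s_n{}^\smallfrown 0 {}^\smallfrown z$ and $s_n{}^\smallfrown 1{}^\smallfrown z$ land in the same piece of the partition for some $n$ — exactly the content of the standard $G_0$-dichotomy genericity lemma, applied here not to produce a homomorphism of $G_0$ but to locate one bad edge. Once those two points are nailed down, the theorem follows, and transporting the conclusion from $\pf{\s D}$ back to $\s D$ via Theorem~\ref{thm:simple reduction} completes the proof.
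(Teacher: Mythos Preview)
Your overall strategy matches the paper's: pass to $\pf{\s D}$, invoke Lemma~\ref{lem: sharper tree duality}, and build a $G_0$-type acyclic instance on which any Baire measurable map must be constant on some copy of the fiber. The gap is that Lemma~\ref{lem: sharper tree duality} gives no control over $|f\inv(x)|$; in general the fiber has some size $n\ge 2$, not $2$. Your gluing identifies $f\inv(x)$ with the two endpoints of a binary $G_0$-edge, which for $n>2$ is not injective, so either your ``copy of $\s Y$'' is really a quotient (potentially introducing cycles and destroying the acyclicity you need) or the construction is simply undefined. The Baire-category endgame likewise only locates a single edge with both endpoints in one color class, forcing agreement at two fiber points rather than all $n$; for a relation such as ``not all equal'' this is not enough to contradict~(b).

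The paper fixes this by working on $\{1,\dots,n\}^\omega$ rather than $2^\omega$ and using $n$-ary hyperedges $\{\sigma_i\fr 1\fr t,\dots,\sigma_i\fr n\fr t\}$ in place of binary $G_0$-edges. It also absorbs the private vertices of each copy of $\s Y$ into a single simply definable $n$-ary relation $R$, where $R(a_1,\dots,a_n)$ holds iff $x_j\mapsto a_j$ extends to a solution of $\s Y$; the Borel instance then lives entirely on $\{1,\dots,n\}^\omega$, which is equivalent to your gluing once the arity is corrected and sidesteps the bookkeeping you flag as the main obstacle. One further point you gloss over: ``not width $1$'' alone does not yield an arc-consistent instance that fails cycle-consistency, since a non-bounded-width $\s D$ can have arc-consistent, cycle-consistent, unsolvable instances. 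The paper handles this via a case split on bounded width, invoking the simple construction of $\bb F(3)$ (Corollary~\ref{cor:not bdd width}) in the unbounded case.
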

\begin{proof}
We may assume $\s D=\pf{\s D}$. Either $\s D$ is bounded width or, by Theorem \ref{thm:bddwidth characterization}, $\s D$ simply constructs $\bb F(3)$ for some finite field $\bb F$. In either case, there is an arc-consistent but not cycle-consistent instance of $\s D$. Then by the previous lemma, we can find $\s X$ arc-consistent, $f:\s Y\rightarrow \s X$ a finite acyclic lift, and $x\in \s X$ so that $\s Y$ has no solution which is constant on $f\inv(x)$. 

Let $f\inv(x)=\{x_1,...,x_n\}$ and define an $n$-ary relation $R$ by 
\[R(a_1,...,a_n):\lra x_i\mapsto a_i\mbox{ extends to a solution to }\s Y.\] Then, $R$ is simply definable in $\s D$, any acyclic instance of $R$ has a solution, and there is no solution to any instance of $R$ which constant on any tuple of $R$.

Fix a sequence $\ip{\sigma_i: i\in \N}$ with $\sigma_i\in \{1,...,n\}^i$ so that every string in $\{1,...,n\}$ extends to some $\sigma_i$. Let $\s Y_0$ be the instance of $R$ with domain $\{1,...,n\}^\omega$ and 
\[R^{\s Y_0}(s_1,...,s_n):\lra (\exists 1\leq i\leq n, t\in \{1,...,n\}^\omega) \bigwedge_{j=1}^{n} s_j=\sigma_i\fr j\fr t.\]

Since $\s Y_0$ is acyclic it has a solution. But suppose $g: \s Y_0\rightarrow \s D$ is Baire measurable. There is some $a\in \s D$ with $g\inv(a)$ nonmeager. Then $g\inv(a)$ is comeager in some basic neighborhood $N_{\sigma_i}$. Since the maps which cycle the $(i+1)^{th}$ coordinate of a sequence are self-homeomorphisms of $N_{\sigma_i}$, there is some $t$ so that $(\sigma_i\fr j\fr t)\in g\inv(a)$ for $j=1,...,n$. But then $g$ is constant on some tuple which is a contradiction.
\end{proof}
\begin{cor}
The set of essentially classical structures is decidable.
\end{cor}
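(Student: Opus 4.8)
The plan is to reduce essential classicality to a decidable algebraic condition. By Theorems~\ref{thm:width1 implies classical}, \ref{thm: classical implies width1} and~\ref{thm:width1 characterization} (equivalently, Theorem~\ref{thm: classical intro}), $\csp_B(\s D)$ is essentially classical if and only if $\s D$ admits totally symmetric polymorphisms of arbitrarily high arity, so it suffices to show that \emph{this} property is decidable. The obstacle is that it refers to infinitely many polymorphisms, of unbounded arity; I would get around this by packaging them all into a single finite object.

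Call a function $g\colon \pow{D}\sm\{\emptyset\}\to D$ a \emph{width-$1$ operation} for $\s D$ if for every relation $R$ of $\s D$, of arity $k$ say, and every nonempty finite $B\subseteq R$ one has $(g(\pi_1 B),\dots,g(\pi_k B))\in R$, where $\pi_j B=\{b_j:\bar b\in B\}$. Since $\s D$ has finitely many relations and each has finitely many subsets, whether a given $g$ is a width-$1$ operation is decidable, and there are only finitely many candidate functions $g$. The heart of the argument is the equivalence: $\s D$ has totally symmetric polymorphisms of all arities if and only if it admits a width-$1$ operation. Granting this, the corollary is immediate, since the right-hand side can be checked by brute force over the functions $\pow{D}\sm\{\emptyset\}\to D$.

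For the forward direction, note that a totally symmetric operation of arity $n\ge|D|$ is the same thing as a function $g$ on nonempty subsets of $D$, via $g(S)=f(\bar s)$ for any length-$n$ tuple $\bar s$ with underlying set $S$ (well-defined by total symmetry). Taking $n\ge |D|^r$, where $r$ is the maximal arity of a relation of $\s D$, and $f$ a totally symmetric polymorphism of $\s D$ of arity $n$, one checks the associated $g$ is a width-$1$ operation: given $R$ and $B=\{\bar b^1,\dots,\bar b^t\}\subseteq R$ with $t\le |D|^r\le n$, pad $\bar b^1,\dots,\bar b^t$ with copies of $\bar b^1$ to get $n$ rows of $R$ and apply $f$ coordinatewise; total symmetry makes the $j$th output coordinate equal to $g(\pi_j B)$, and the whole tuple lies in $R$ because $f$ is a polymorphism. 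For the converse, given a width-$1$ operation $g$ and any $m$, the $m$-ary operation $g_m(x_1,\dots,x_m):=g(\{x_1,\dots,x_m\})$ is totally symmetric, and it is a polymorphism: for $\bar a^1,\dots,\bar a^m\in R$ put $B=\{\bar a^1,\dots,\bar a^m\}\subseteq R$, so that $g_m(\bar a^1,\dots,\bar a^m)=(g(\pi_1 B),\dots,g(\pi_k B))\in R$ by definition. Thus the single width-$1$ operation $g$ simultaneously witnesses totally symmetric polymorphisms of every arity.

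The only real content is this packaging lemma — in particular the point that one width-$1$ operation certifies polymorphisms of all arities at once — and I expect the routine bookkeeping (well-definedness of $g$ from $f$, the padding step, treating unary and higher-arity relations uniformly) to be the main thing to pin down; after that, the corollary is just a finite search.
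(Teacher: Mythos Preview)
Your proposal is correct and follows the same route the paper intends: reduce via Theorems~\ref{thm:width1 implies classical}, \ref{thm: classical implies width1}, and~\ref{thm:width1 characterization} to the width-1 condition, then observe that this is decidable. The paper states the corollary without proof, taking the decidability of width~1 as known from Dalmau--Pearson; your ``width-1 operation'' is exactly their set-function characterization, so you have simply unpacked the relevant piece of Theorem~\ref{thm:width1 characterization} rather than citing it.
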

\begin{cor}
If $\s D$ is essentially classical it is effectivizable.
\end{cor}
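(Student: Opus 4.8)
The plan is to read this off from the two halves of the characterization of essentially classical structures proved above. Suppose $\s D$ is a finite relational structure and $\csp_B(\s D)$ is essentially classical. The first step is to argue that $\s D$ must be width 1. Indeed, if $\s D$ were not width 1, then Theorem \ref{thm: classical implies width1} would produce a Borel instance $\s X$ of $\s D$ that has a solution but no Baire measurable solution; since every Borel function is Baire measurable, $\s X$ then has no Borel solution, contradicting the assumption that $\csp_B(\s D)$ is essentially classical. Hence $\s D$ is width 1.

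The second step is immediate: Theorem \ref{thm:width1 implies classical} asserts that for any finite width 1 structure $\s D$, $\csp_B(\s D)$ is both essentially classical and effectivizable. Applying this to our $\s D$ gives effectivizability, which completes the proof.

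I do not expect any obstacle, since all of the content is already contained in Theorems \ref{thm:width1 implies classical} and \ref{thm: classical implies width1}. If one preferred an argument that did not route through the Dalmau--Pearson width 1 characterization, the step that would need to be reproved is the effectivization inside the proof of Theorem \ref{thm:width1 implies classical}: given a $\Delta^1_1$ arc-consistent instance, use the first reflection theorem (noting that the predicate ``$\overline f$ is a good witness'' is $\Pi^1_1$ on $\Sigma^1_1$) to build a $\Delta^1_1$ good witness, apply Luzin--Novikov to select a $\Delta^1_1$ enumeration of the complements of the witness sets, and then post-compose with a totally symmetric polymorphism of arity at least $|D|\cdot N$, where $N$ is the maximal arity of a relation of $\s D$, to obtain a $\Delta^1_1$ solution. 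This is precisely the argument already given, so there is nothing new to carry out.
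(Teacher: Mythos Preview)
Your proposal is correct and matches the paper's own proof essentially verbatim: the paper also invokes Theorem \ref{thm: classical implies width1} (contrapositively) to get that $\s D$ is width 1 (equivalently, has totally symmetric polymorphisms of all arities), and then cites Theorem \ref{thm:width1 implies classical} for effectivizability. Your additional paragraph simply recapitulates the proof of Theorem \ref{thm:width1 implies classical} and is not needed.
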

\begin{proof}
If $\s D$ is essentially classical, then by the above theorem $\s D$ admits a totally symmetric polymorphism. So by Theorem \ref{thm:width1 implies classical}, $\s D$ is effectivizable.
\end{proof}

\section{Effectivizable CSPs} \label{section:effective}

In this section, we give a number of examples of effectivizable structures. In particular we show that any structure with a dual discriminator polymorphism is effectivizable. This is modest progress, but it is enough to compute the complexity of any so-called smooth directed graph and any Boolean structure except $\bb F_2(3)$.

\begin{prop}[Folklore]
Suppose that $\s E$ has domain $D$ and a dual discriminator polymorphism. Then $\s E$ is simply definable in the structure $\s D$ with the following relations:

\begin{itemize}
    \item Every unary predicate
    \item For $a,b\in A$, each predicate\[R_{a,b}(x,y):\lra x=a\vee y=b\]
    \item For $f\in \operatorname{Sym}(A)$, each predicate
    \[R_f(x,y):\lra y=f(x)\]
\end{itemize}
\end{prop}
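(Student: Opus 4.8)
The plan is to show that any relation preserved by the dual discriminator $d$ on $D$ can be written as an existentially quantified conjunction of the "two-variable" relations listed in the statement (unary predicates, the $R_{a,b}$, and the graphs $R_f$ of permutations). This is the classical analysis of dual-discriminator clones (essentially the $2$-decomposability of majority-closed relations), and the only thing to be careful about here is to keep the definition \emph{simple}, i.e.\ equality-free, so that it fits Definition \ref{dfn: simple dfns}.

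First I would recall the key structural fact: if $d$ is a dual discriminator polymorphism of $R$ (an $m$-ary relation), then $R$ is $2$-decomposable, meaning
\[
R = \bigl\{ (x_1,\dots,x_m) : (\forall i<j)\; (x_i,x_j)\in \pi_{ij}(R) \bigr\}.
\]
This follows by a standard induction: given a tuple $\bar x$ with all binary projections in $R$, one builds a witness in $R$ by repeatedly applying $d$ to tuples of $R$ that agree with $\bar x$ on larger and larger coordinate sets, using that $d(u,u,v)=d(u,v,u)=d(v,u,u)=u$ lets us "patch" two partial solutions that overlap in all but one coordinate. So it suffices to show each binary relation $S=\pi_{ij}(R)$ on $D$ that is preserved by $d$ is simply definable from the listed relations; then $R(x_1,\dots,x_m)$ is the conjunction (over $i<j$) of these defining formulas in the variables $x_i,x_j$, with the auxiliary existential variables of each conjunct renamed apart — no equalities introduced, since the listed relations already absorb all the equalities we need.

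Next I would classify the binary $d$-invariant relations $S\subseteq D^2$. Let $S_1=\pi_1(S)$ and $S_2=\pi_2(S)$ be the (unary, hence listed) projections. Working inside $S_1\times S_2$, consider the "functional part": for $a\in S_1$ the fiber $S_a=\{b:(a,b)\in S\}$. Applying $d$ coordinatewise to pairs of rows of $S$ shows that the fibers are highly constrained — concretely, one shows that either $S$ contains a full product block or it is, up to the unary restrictions, a conjunction of constraints of the form $x=a \vee y=b$ together with a partial injection $y=f(x)$; the point is that if two distinct fibers $S_a,S_{a'}$ are both non-singleton then $d$ forces $S$ to contain a rectangle $S_a\times\{$ something $\}$, and iterating collapses the non-rectangular part to a graph of a bijection between the relevant sub-domains. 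I would make this precise by an explicit case analysis on $|S_a|$: the set of $a$ with $|S_a|\ge 2$ behaves like one block, the $a$ with $|S_a|=1$ contribute the permutation-graph part (extend any partial injection arising this way to a genuine $f\in\operatorname{Sym}(D)$, which is harmless after intersecting back with the unary predicates $S_1,S_2$), and the forbidden pairs $(a,b)\notin S$ with $a\in S_1,b\in S_2$ are exactly cut out by intersecting relations $R_{a',b'}$. Each of these pieces is one of the listed relations (or a unary predicate), and $S$ is their conjunction — again equality-free.

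The step I expect to be the main obstacle is the binary classification: proving cleanly that $d$-invariance forces every binary relation into the shape "intersection of a product of unary sets, finitely many $x=a\vee y=b$ constraints, and at most one permutation-graph $y=f(x)$." The $2$-decomposability reduction is routine once stated, but the binary case needs an honest argument that applying $d$ to rows of $S$ cannot produce anything outside this template — in particular handling the interaction between the "rectangular" part and the "functional" part, and checking that partial bijections can always be completed to elements of $\operatorname{Sym}(D)$ without changing $S$ after re-intersecting with the unary predicates. Once that is in hand, assembling the simple definition of a general $R$ is just taking the conjunction over binary projections with variables renamed, and verifying no equality symbol is needed because $R_{a,b}$, $R_f$, and the unary predicates already express everything.
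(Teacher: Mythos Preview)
Your proposal is correct and follows the same two-step strategy as the paper: first reduce an arbitrary $d$-invariant relation to a conjunction of its binary projections via $2$-decomposability, then classify the binary $d$-invariant relations. The paper's induction for $2$-decomposability is essentially your ``patching'' argument, phrased as choosing three $\ell$-element subsets $J_1,J_2,J_3$ of a given $(\ell+1)$-set and applying $d$ to the three witnesses.

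One point worth noting: the binary classification comes out cleaner than you anticipate. The key observation (which you have) is that any $a$ with two partners in $S$ already has \emph{all} of $\pi_2(S)$ as partners. Running this symmetrically in both coordinates, a short case split on how many elements of $\pi_1(S)$ have full fiber shows that $S$ is always $(\pi_1(S)\times\pi_2(S))$ intersected with a \emph{single} $R_{a,b}$ or a \emph{single} $R_f$ (or nothing). So you never need a conjunction of several $R_{a',b'}$'s, you never need existential quantifiers or auxiliary variables, and the extension of a partial bijection to a full permutation is automatic because the bijection is already between sets of equal size. The ``interaction between rectangular and functional parts'' that you flag as the main obstacle collapses once you use the symmetric version of the fiber observation.
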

\begin{proof}
Let $T:D^3\rightarrow D$ be the dual discriminator. It is straightforward to check these relations are all preserved by $T$. Suppose that $R\subseteq D^n$ is preserved by $T$. We first show that $R$ is a conjunction of binary predicates. Suppose that $\pi_{i,j}(a)\in \pi_{i,j}(R)$. We check by induction on $|J|$ that $\pi_J(a)\in \pi_J(R)$ for any $J\subseteq \{1,...,n\}$. Pick some $J$ with $|J|=\ell+1$ and pick $J_1, J_2, J_3\subseteq J$ distinct with $|J_1|=|J_2|=|J_3|=\ell$. By induction, there are $b_1$, $b_2$, and $b_3\in R$ with $\pi_{J_i}(b_i)=\pi_{J_i}(a)$. Then, $ b:= T(b_1,b_2,b_3)\in R$ and for any $j\in J$, there are at least 2 values of $i$ with $j\in J_i$, meaning the majority of $b_1,b_2,b_3$ agree with $a$ in coordinate $j$. Thus $\pi_J(a)=\pi_J(b).$

Now suppose that $R$ is a binary relation which is invariant under $T$. We show that it is a conjunction of relations of the above form. Let $A=\pi_2(R)$ and $B=\pi_2(R)$. Note that if $a\in D$ can be paired with two different elements $b,b'$ so that $(a,b),(a,b')\in R$, then for any $(c,d)\in R$, we have
\[d((c,d),(a,b),(a,b'))=(a,d)\in R.\] So if $a$ can be paired with two different elements of $B$, then $a$ can be paired with with anything in $B$. Thus $R$ is the either $A\times B$ or the intersection of $A\times B$ with a relation of the form $R_\pi$ or $R_{a,b}$.
\end{proof}

Effectivization for $\csp_B(\s D)$ follows fairly easily from the main theorem of \cite{effectivization}.

\begin{thm}\label{thm: dd effective}
If $\s D$ has a dual discriminator polymorphisms, then $\csp_B(\s D)$ is effectivizable.
\end{thm}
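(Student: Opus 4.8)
The plan is to reduce the claim to the main effectivization theorem of \cite{effectivization} by combining two ingredients from this paper: the structural decomposition of dual-discriminator structures given in the preceding Proposition, and the fact (Theorem \ref{thm:simple reduction}, or rather its $\Delta^1_1$ refinement) that simple constructions induce reductions which are $\Delta^1_1$ in the codes. Concretely, the Proposition shows that $\s D$ is simply definable in the structure whose relations are all unary predicates, all relations $R_{a,b}(x,y):\lra x=a\vee y=b$, and all graphs of permutations $R_f(x,y):\lra y=f(x)$. Since $\s D$ has a dual discriminator polymorphism, $\s D$ and this ambient structure generate the same CSP up to simple definability, so $\csp_B(\s D)$ and $\csp_B$ of the ambient structure have $\Delta^1_1$-coded reductions in both directions by Theorem \ref{thm:simple reduction}. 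Hence it suffices to prove effectivizability for the ambient structure, i.e.\ for structures whose constraints are only unary predicates, two-clause ``or'' constraints $R_{a,b}$, and permutation constraints $R_f$.

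First I would analyze the shape of a Borel instance $\s X$ of the ambient structure and describe when a (classical) solution exists, in a way amenable to reflection. Arc-consistency/unit-propagation on the unary predicates and permutation constraints can be carried out $\Delta^1_1$-uniformly: propagating $R_f$ constraints identifies variables along permutation-chains and pushes unary restrictions around, and this closure operation is $\Pi^1_1$-on-$\Sigma^1_1$, so the first reflection theorem (exactly as in the proof of Theorem \ref{thm:width1 implies classical}) yields a $\Delta^1_1$ good witness $U_x\subseteq D$ for each $x$ when one exists. What remains after this reduction is essentially a 2-SAT-like problem: the $R_{a,b}$ constraints on variables with shrunken domains behave like binary clauses, and the relevant fact is that $R_{a,b}$ has the majority/dual-discriminator polymorphism, i.e.\ we are in the regime of bounded-width-$2$, 2-SAT-style propagation. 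I would then invoke the main theorem of \cite{effectivization}: for problems whose solvability is detected by a $\Delta^1_1$-uniform local propagation procedure (Gandy--Harrington-style), a Borel solution to a $\Delta^1_1$ instance refines to a $\Delta^1_1$ solution. The role of the last clause of Theorem \ref{thm:simple reduction} — that $F$ preserves bounded degree, local finiteness, and local countability — is to guarantee that the reduced instance is still in whatever regularity class the cited theorem requires.

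The main obstacle I expect is the 2-SAT core: $R_{a,b}$ constraints encode implications, and although classically 2-SAT is solved by a transitive-closure/strongly-connected-component argument, the Borel version of ``follow the implication digraph and assign greedily'' must be done $\Delta^1_1$-uniformly, which is precisely the content one needs to extract from \cite{effectivization} rather than reprove. So the bulk of the argument is packaging: verifying that the propagation closure is $\Pi^1_1$-on-$\Sigma^1_1$ so reflection applies, checking that a Borel solution of the original instance pushes forward (via the $H$ map of Theorem \ref{thm:simple reduction}) to a Borel solution of the reduced instance and pulls back to a $\Delta^1_1$ solution of the original, and confirming the hypotheses of the cited effectivization theorem are met by instances of the ambient structure. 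A secondary nuisance is that $\pf{\s D}$-style closures and the ambient structure are infinite, so I must be careful that ``$\Delta^1_1$ in the codes'' still makes sense; but since the ambient structure has only finitely many relation \emph{arities} (all $\le 2$) and the constraints are parametrized by the finite set $D$ and $\operatorname{Sym}(D)$, this is routine.
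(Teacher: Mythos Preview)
Your overall strategy matches the paper's: reduce to the ambient structure with unary predicates, the relations $R_{a,b}$, and permutation graphs $R_f$, then apply \cite[Theorem 3.6]{effectivization}. But the substance of the proof is exactly the step you leave as a black box, and your description of what \cite{effectivization} supplies is inverted.

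The cited theorem does not take ``problems solvable by local propagation'' and hand back a $\Delta^1_1$ solution. What it does is: given a property $\Phi$ on subsets of $\baire\times D$ that is built from closure and independence conditions (so that it is $\Pi^1_1$-on-$\Sigma^1_1$), if some Borel sequence $\langle f_i\rangle$ of $\Phi$-sets covers $\baire$ then there is a $\Delta^1_1$ such sequence. The user must (i) write down a concrete $\Phi$ and (ii) prove that a countable cover by $\Phi$-sets actually yields a solution. The paper's $\Phi$ says: $f$ is a partial function, respects unary constraints, and is closed under the implications forced by $R_{a,b}$ and $R_f$. Step (ii) is the real content: from a cover $\langle f_i\rangle$ one sets $n(x)=\min\{i:x\in\dom(f_i)\}$ and $f(x)=f_{n(x)}(x)$, and then checks relation-by-relation that this works. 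The point for $R_{a,b}(x,y)$ is that if (say) $n(x)\le n(y)$ and $f_{n(x)}(x)\ne a$, then closure of $f_{n(x)}$ forces $(y,b)\in f_{n(x)}$, hence $n(y)=n(x)$ and $f(y)=b$. This first-appearance trick is precisely the ``greedy assignment'' you identify as the main obstacle, and it is done in the paper, not extracted from \cite{effectivization}.

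Two smaller points. Your preliminary arc-consistency pass via reflection is unnecessary: the paper's $\Phi$ already contains the unary conditions (item (2) of its definition), so unit propagation is absorbed into the single application of the effectivization theorem. And the reduction via Theorem \ref{thm:simple reduction} is fine but also superfluous here, since the preceding Proposition says $\s E$ is simply \emph{definable} in the ambient $\s D$ (same domain, no quotient), so one can simply assume $\s D$ has this form without invoking the full machinery of simple constructions.
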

\begin{proof}
We may assume that $\s D$ is of the form indicated in the previous proposition with domain $D=\{1,...,n\}$. Fix a $\Delta^1_1$ instance $\s X$ of $\s D$. For $f\subseteq \baire\times D$, write $\Phi(f)$ to mean
\begin{enumerate}
    \item $f$ is a partial function, i.e. $(\forall x,y,z)\; \neg\left((x,y),(x,z)\in f, y\not=z\right)$
    \item For all $x\in \s X$, unary predicates $U$ with $U^{\s X}(x)$, and $d\not\in U$, $(x,d)\not\in f$
    \item For all $x,y\in\s X$ and $a,b\in D$ with $(x,y)\in R^{\s X}_{a,b}$, if there is $c\not=a$ with $(x,c)\in f$ then $(y,b)\in f$ (and likewise if $(y,c)\in f$ for some $c\not=b$ then $(x,a)\in f$
    \item For all $x,y\in X$ and $g\in \operatorname{Sym}(D)$ with $(x,y)\in R^{\s X}_f$, if $(x,a)\in f$, then $(y,g(a))\in f$ (and likewise, if $(y,a)\in f$, then $(x,g\inv(a))\in f$).
\end{enumerate}
I claim that $\s X$ has a ($\Delta^1_1$) solution if and only if there is a sequence of ($\Delta^1_1$) sets $\ip{f_i:i\in \N}$ such that $\Phi(f_i)$ for all $i$ and $\baire=\bigcup_i \dom(f_i)$. Then since the properties above are all closure and independence properties the theorem follows by \cite[Theorem 3.6]{effectivization}.

If $f$ is a solution to $\s X$ then $f$ satisfies all of the above properties and $\dom(f)=\baire.$ Conversely, suppose $\ip{f_i:i\in\baire}$ is such a sequence and define 
\[n(x):=\min\{i: x\in\dom(f_i)\},\; \quad f(x):=f_{n(x)}(x).\] We check that $f$ preserves all of the relations in $\s D$.
\begin{itemize}
    \item If $U^{\s X}(x)$ for some unary $U$, then by property $(2)$ $U(f_i(x))$ holds for all $i$ with $x\in \dom(f_i)$. Thus $U(f(x))$.
    \item If $R_{a,b}^{\s X}(x,y)$, then suppose without loss of generality $n(x)\leq n(y)$. If $f_{n(x)}(x)=a$, then $f(x)=a$. If $f_{n(x)}(x)\not=a$, then by property (3) $(y,b)\in f_{n(x)}$, so $n(x)=n(y)$ and $f(y)=b$. In either case $R_{a,b}(f(x),f(y)).$
    \item If $R_g^{\s X}(x,y)$, then suppose without loss of generality $n(x)\leq n(y)$. If $(x,a)\in f_i$, then by property (4), $(y,g(a))\in f_i$, so $n(y)=n(x)$ and $R_g(f(x),f(y)).$
\end{itemize}
\end{proof}

We can now generalize Corollary \ref{thm:hellnesetril} to so-called smooth digraphs. If we could generalize this to all directed graph, then we would have Conjecture \ref{cnj: dichotomy} \cite{digraphreduction}.

\begin{cor}\label{cor: smooth digraphs}
If $\s D$ is a directed graph with no sources or sinks (these are sometimes called smooth digraphs), then $\csp_B(\s D)$ is effectivizable if and only if it is $\bp11$ if and only if $\s D$ is tractable.
\end{cor}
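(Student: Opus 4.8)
The plan is to combine Theorem \ref{thm: dd effective} with the finitary structure theory of smooth digraphs. Recall the Barto--Kozik--Niven theorem: a smooth digraph $\s D$ is tractable if and only if its core is a disjoint union of directed cycles, and otherwise $\csp(\s D)$ is NP-complete. So the proof breaks into two directions. If $\s D$ is intractable, then by Theorem \ref{thm:intractable} $\csp_B(\s D)$ is $\bs12$-complete, hence not $\bp11$ and not effectivizable (a $\bs12$-complete set is certainly not $\bp11$, and effectivizability would give a $\bp11$ upper bound via the usual reflection/boundedness argument, contradicting $\bs12$-hardness). This handles the contrapositive of both ``effectivizable $\Rightarrow$ tractable'' and ``$\bp11 \Rightarrow$ tractable''. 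Since essentially-classical/effectivizable problems are closed under homomorphic equivalence (the reductions in Theorem \ref{thm:simple reduction} cover case (2) of simple constructions, and homomorphic equivalence is one of the allowed moves), we may replace $\s D$ by its core throughout.

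For the forward direction, assume $\s D$ is tractable. By Barto--Kozik--Niven its core $\s C$ is a disjoint union of directed cycles $C_{k_1}, \dots, C_{k_m}$. The key claim is that such a structure admits a dual discriminator polymorphism. On a single directed cycle $C_k$ (domain $\Z/k\Z$ with edge relation $\{(i,i+1)\}$) the ternary operation $T(x,y,z)$ returning the majority value when two of $x,y,z$ agree and $x$ otherwise is a polymorphism: if $(x_1,y_1,z_1)$ and $(x_2,y_2,z_2)$ are edges, then in each coordinate the majority of the three inputs is the same row (exactly as in the remark after the list of polymorphism examples in Section \ref{section:background}, since for a $3\times 2$ matrix applying majority to the columns returns a row), so $(T(x_1,x_2),T(y_1,y_2))$ is that row's edge. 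On the disjoint union, apply the componentwise dual discriminator when all three inputs lie in the same cycle, and return $x$ otherwise; one checks this is still a polymorphism because any edge of the disjoint union has all coordinates in a single component. Hence $\s C$ has a dual discriminator polymorphism, so by Theorem \ref{thm: dd effective} $\csp_B(\s C)$ is effectivizable, and therefore so is $\csp_B(\s D)$. Effectivizability in turn yields $\csp_B(\s D) \in \bp11$: an effectivization theorem of the form in \cite{effectivization} gives, for $\Delta^1_1$ instances, a $\Sigma^1_1$/$\Pi^1_1$-bounded search for a solution, and by relativization this bounds the complexity of $\csp_B(\s D)$ to $\bp11$. (Alternatively, invoke Theorem \ref{thm:width1 implies classical}-style reasoning via the explicit reduction of the previous proposition, but the cleanest route is: dual discriminator $\Rightarrow$ effectivizable $\Rightarrow$ $\bp11$.)

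Assembling the implications: tractable $\Rightarrow$ effectivizable $\Rightarrow$ $\bp11$ $\Rightarrow$ (not $\bs12$-complete, trivially) $\Rightarrow$ tractable, where the last implication is the contrapositive of Theorem \ref{thm:intractable} combined with Barto--Kozik--Niven. This closes the cycle of equivalences.

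The main obstacle I anticipate is the step ``effectivizable $\Rightarrow$ $\bp11$'' — i.e. extracting a genuine projective upper bound from effectivizability. Effectivizability as defined only promises that $\Delta^1_1$ instances with Borel solutions have $\Delta^1_1$ solutions; turning this into a boundface $\bp11$ complexity bound for $\csp_B(\s D)$ requires the usual relativization and the fact that the effectivization proof is sufficiently uniform (parametrized by Borel codes), exactly the phenomenon flagged in the introduction that ``non-trivial upper bounds almost always come with an effectivization result''. I expect this to go through by the same Gandy--Harrington/reflection machinery underlying \cite{effectivization}, but it is the place where care is needed rather than a formal citation. The verification that the componentwise dual discriminator is a polymorphism of a disjoint union of cycles is routine, and the appeal to Barto--Kozik--Niven and to Theorems \ref{thm:intractable} and \ref{thm: dd effective} is black-box.
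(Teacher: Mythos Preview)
Your overall strategy matches the paper's exactly: invoke Barto--Kozik--Niven to reduce the tractable case to disjoint unions of directed cycles, show these admit a dual discriminator polymorphism, and then cite Theorems \ref{thm: dd effective} and \ref{thm:intractable} for the two directions.

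There is one genuine wrinkle in your verification. For the disjoint union you define the operation ``apply the dual discriminator when all three inputs lie in the same cycle, and return $x$ otherwise.'' That operation is \emph{not} the dual discriminator: if $y=z$ lie in one cycle and $x$ lies in another, your operation outputs $x$ while the dual discriminator outputs $y$. So what you have exhibited is some other polymorphism, and Theorem \ref{thm: dd effective} does not apply to it. The fix is exactly what the paper does in one line: a disjoint union of directed cycles is the edge relation of a single permutation $f$ of the vertex set, i.e.\ it is a relation of the form $R_f$ in the folklore proposition preceding Theorem \ref{thm: dd effective}. Since the genuine dual discriminator preserves every $R_f$ (because $f$ is a bijection, so a repeated value among $x,y,z$ forces the same repeat among $f(x),f(y),f(z)$), the actual dual discriminator is already a polymorphism of the whole structure, and no componentwise patching is needed. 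Your $3\times 2$-matrix remark is also borrowed from the Boolean case and does not literally apply when $|D|>2$; the permutation argument is the correct replacement.

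On the step ``effectivizable $\Rightarrow \bp11$'' that you flag as the main obstacle: the paper simply treats this as clear (see the proof of Theorem \ref{thm:hellnesetril}, where ``$(3)\Rightarrow(4)$'' is asserted without comment). The intended justification is the standard one you sketch --- relativize the effectivization to the Borel code of the instance and use that quantifying existentially over $\Delta^1_1(c)$ is $\Pi^1_1(c)$ --- so you are not missing an idea here, just being more scrupulous than the paper.
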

\begin{proof}
By a theorem of Barto, Kozik, and Niven \cite{SmoothDigraphs}, for such graphs either $\s D$ is intractable or the core of $\s D$ is a disjoint union of directed cycles. A disjoint union of cycles is the graph of a permutation, so any tractable smooth digraph admits a dual discriminator polymorphism.
\end{proof}

We can also compute the complexity of most Boolean sturctures.

\begin{cor}[c.f. Schaefer's theorem \cite{Schaefer}] \label{cor: schaefer}
If $\s D$ is a structure on $\{0,1\}$, one of the following holds:
\begin{enumerate}
    \item $\s D$ has a totally symmetric polymorphism and $\csp(\s D)$ is essentially classical
    \item $\s D$ is pp constructible in $2\sat$ and $\csp_B(\s D)$ is effectivizable
    \item $\s D$ is intractable and $\csp_B(\s D)$ is $\bs12$-complete, or
    \item $\s D$ is pp constructible in $\bb F_2$ and vice versa.
\end{enumerate}
\end{cor}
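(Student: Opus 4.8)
The plan is to recall the classical Schaefer classification of Boolean structures (equivalently, Post's lattice of clones on $\{0,1\}$) and run through the six maximal clones on $\{0,1\}$, checking in each case which of the four alternatives is forced. Recall that every clone on $\{0,1\}$ is contained in one of the maximal clones generated by: (a) the $n$-ary \emph{and} (equivalently, the structures with $\wedge$ as a polymorphism, e.g. $\horn$); (b) the $n$-ary \emph{or} (dually $\csp$ of dual-Horn); (c) $\maj$, the majority/dual-discriminator on $\{0,1\}$ (these are the 2SAT-type structures, since $\maj$ generates $\pol(2\sat)$); (d) the minority operation $x+y+z$ over $\bb F_2$ (these are the affine/linear structures, whose polymorphism clone is $\pol(\bb F_2(3))$); (e) negation; (f) a constant. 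So if $\s D$ has a nontrivial polymorphism at all, its polymorphism clone lies below one of (a)--(d) (the clones (e),(f) are contained in clones already in this list once one also asks for the Siggers/idempotency reductions), and if it has no nontrivial polymorphism beyond projections then by Theorem~\ref{thm:intractable equivalents} $\s D$ pp constructs every structure and is intractable.

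First I would dispose of alternative (3): if $\s D$ is intractable, Theorem~\ref{thm:intractable} gives that $\csp_B(\s D)$ is $\bs12$-complete, so we are done. So assume $\s D$ is tractable; then by the CSP dichotomy setup (Theorem~\ref{thm:intractable equivalents}) $\pol(\s D)$ is not contained in the projection clone, hence it lies below one of the four nontrivial maximal idempotent clones on $\{0,1\}$ listed above. Case (d): if $\pol(\s D)\subseteq \pol(\bb F_2(3))$, i.e. the affine clone, then by Theorem~\ref{thm:correspondences}(1) $\bb F_2(3)$ pp defines $\s D$; and since $\s D$ is tractable but affine structures are pp-construction-maximal among tractable Boolean structures in the sense that $\bb F_2$ pp constructs $\s D$ and conversely $\s D$ pp constructs $\bb F_2$ (one checks $\s D$ is not of bounded width, or that $\pol(\s D)$ generates the same h1 variety as the affine clone), this is exactly alternative (4). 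Cases (a) and (b): if $\pol(\s D)$ contains the $n$-ary \emph{and} (or dually \emph{or}) for all $n$, then $\s D$ has a totally symmetric polymorphism of every arity, so by Theorem~\ref{thm:width1 characterization} it is width~1, and by Theorem~\ref{thm:width1 implies classical} $\csp_B(\s D)$ is essentially classical (and effectivizable); this is alternative (1). If instead $\pol(\s D)$ only contains \emph{and} of bounded arity but still lies below the \emph{and}-clone, one needs to observe it then contains $\maj$ as well (this is a feature of Post's lattice: below the \emph{and}-clone the only sub-clones not giving totally symmetric operations of all arities already contain $\maj$), reducing to case (c). Case (c): if $\maj\in\pol(\s D)$, i.e. $\s D$ has a dual discriminator polymorphism, then by Theorem~\ref{thm:correspondences}(1) $\s D$ is pp definable in $2\sat$, a fortiori pp constructible in $2\sat$, and by Theorem~\ref{thm: dd effective} $\csp_B(\s D)$ is effectivizable; this is alternative (2).

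To organize this cleanly I would first reduce to the case that $\s D$ is a core, and then (using that the three singleton-expansion and homomorphic-equivalence moves in pp construction do not affect which alternative holds, via Corollary~\ref{cor:equality closure} and the observation that all finite $\{0,1\}$-structures have equality pp definable once we are in a core with a constant) reduce to the idempotent case, so that Post's lattice collapses to the handful of clones above. The bookkeeping of exactly which sub-clones of the \emph{and}-clone fail to yield totally symmetric operations of all arities — and hence must contain $\maj$ — is the one place where I would lean directly on the detailed structure of Post's lattice rather than on the general CSP machinery.

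The main obstacle I expect is precisely this last point: marrying the coarse four-way split coming from the four nontrivial maximal clones with the finer structure of Post's lattice, so as to be sure that every tractable Boolean $\s D$ genuinely falls into (1), (2), or (4) and not into some gap. Concretely, one must verify that a tractable Boolean structure which is neither width~1 nor affine necessarily has $\maj$ (equivalently a dual discriminator) as a polymorphism — this is the Schaefer trichotomy ``2SAT / Horn / affine'' repackaged, and while it is classical, stating it in the polymorphism language and checking it lines up with Definition~\ref{dfn:basic identities} and Theorem~\ref{thm: dd effective} is where the real work of the proof sits. Everything else is an application of results already proved: Theorem~\ref{thm:intractable} for (3), Theorems~\ref{thm:width1 characterization} and~\ref{thm:width1 implies classical} for (1), Theorem~\ref{thm: dd effective} for (2), and Theorem~\ref{thm:correspondences} together with the bounded-width characterization Theorem~\ref{thm:bddwidth characterization} for (4).
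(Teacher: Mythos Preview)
Your overall strategy---invoke Post's lattice and read off which of the four alternatives holds---is exactly what the paper does. However, you have systematically confused \emph{maximal} clones with \emph{minimal} ones, and this infects the direction of every containment in your argument. The operations you list in (a)--(f) are generators of the \emph{atoms} of Post's lattice (the minimal clones strictly above the projections), not the maximal clones. The correct organizing statement is: if $\pol(\s D)$ is not the bare projection clone then it \emph{contains} one of these atoms, i.e.\ one of constant $0$, constant $1$, $\wedge$, $\vee$, $\neg$, $\maj$, or $x\oplus y\oplus z$ lies in $\pol(\s D)$. Your phrase ``$\pol(\s D)$ lies below one of (a)--(d)'' is the wrong direction and makes the subsequent case analysis incoherent: for instance, in your case (d) you write $\pol(\s D)\subseteq\pol(\bb F_2(3))$, which only gives one half of alternative (4), and you then wave your hands at the other half. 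The paper instead argues that if minority is in $\pol(\s D)$ but none of the operations from cases (1)--(2) is, then by Post's lattice $\pol(\s D)$ is \emph{exactly} $\ip{x\oplus y\oplus z}$ or $\ip{x\oplus y\oplus z,\neg}$, which gives both directions of (4) immediately via Theorem~\ref{thm:correspondences}.

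A smaller point: your worry about ``$\pol(\s D)$ only contains \emph{and} of bounded arity'' is empty. If binary $\wedge$ is a polymorphism then so is the $n$-ary $\wedge$ for every $n$, by iterated composition; there is no sub-case to chase into $\maj$. Once you correct the max/min confusion and drop this spurious case, your argument collapses to precisely the paper's: inspect the atoms of Post's lattice, observe that containing a constant, $\vee$, or $\wedge$ gives (1), containing $\maj$ gives (2), being contained in $\ip{\neg}$ (which covers both the projection clone and $\ip{\neg}$ itself, since $\ip{\neg}=\pol(\nae)$ is intractable) gives (3), and the two remaining clones $\ip{x\oplus y\oplus z}$ and $\ip{x\oplus y\oplus z,\neg}$ give (4).
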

\begin{proof}
We'll take the opportunity to use a bit of a sledgehammer here. In the 1940's Post classified all clones on $\{0,1\}$, see \cite{Post}. By inspecting the minimal elements in the lattice of clones, one can see that $\s D$ falls into one of the following cases: 
\begin{enumerate}
    \item $\pol(\s D)$ contains a constant function, $\vee$, or $\wedge$
    \item $\pol(\s D)$ contains the majority function
    \item $\pol(\s D)\subseteq\ip{\neg}$ (the algebra generated by the negation function)
    \item $\pol(\s D)$ is one of the following: $\ip{x\oplus y\oplus z}$ or $\ip{x\oplus y \oplus z, \neg}$.
\end{enumerate} All of the operations in the first case are totally symmetric. In the second case, $\csp(\s D)$ is in fact pp definable in $2\sat$. Note that $\ip{\neg}=\pol(N)$, where $N$ is the not-all-equal predicate. So in the third case, $\s D$ is intractable. And either of the algebras in the last class correspond to structures which are equivalent $\bb F_2$.
\end{proof}

And, we have one last example.

\begin{thm} \label{thm: rps effective}
Let $\s D$ be the structure on $\{r,p,s\}$ equipped with all relations which are preserved by the rock-paper-scissors operation, $(\star)$ (see item (6) after Definition \ref{dfn: polymorphisms}). Then Borel solutions to locally countable instances of $\s D$ are effectivizable.
\end{thm}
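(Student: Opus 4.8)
The plan is to adapt the proof of Theorem~\ref{thm: dd effective}: present the locally countable part of $\csp_B(\s D)$ as a problem that is solved by \emph{covering} the vertex set of an instance with countably many $\Delta^1_1$ ``locally consistent'' partial assignments, and then appeal to \cite[Theorem 3.6]{effectivization} to replace a Borel cover by a $\Delta^1_1$ one. The algebraic input is elementary: checking the three cyclic cases shows that $(\star)$ is commutative, idempotent, and a \emph{$2$-semilattice}, i.e.\ $x\star(x\star y)=x\star y$. In particular $\pol(\s D)$ has a binary weak near unanimity operation, so $\s D$ is tractable (Theorem~\ref{thm:intractable equivalents}); and since a $2$-semilattice clone omits module clones in its variety, $\HSP(\pol(\s D))$ contains no $\pol(\bb F)$, so $\s D$ has bounded width by Theorem~\ref{thm:bddwidth characterization}. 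Unlike structures with a dual discriminator, however, $\s D$ is \emph{not} width $1$ (a four-cycle of graph-of-$\pi$ constraints is arc-consistent but unsolvable) and its relations are not $2$-decomposable (the graph of $\star$ already is not), so the consistency notion we must track is cycle-consistency. The extra feature of $2$-semilattices that rescues us is the classical ``greedy robustness'': once an instance is reduced to its largest cycle-consistent family of unary domains $\ip{U_x}$, every value in every nonempty $U_x$ lies in some solution, and fixing any single variable to any value of its current domain and re-establishing cycle-consistency never empties a domain.

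So fix a locally countable $\Delta^1_1$ instance $\s X$ of $\s D$. First one builds the largest cycle-consistent family $\ip{U_x:x\in X}$. Since $\s X$ is locally countable each connected component of its constraint hypergraph is countable, so this largest fixed point is computed componentwise; exactly as in Theorem~\ref{thm:width1 implies classical}, ``$\ip{U_x}$ is cycle-consistent'' is $\Pi^1_1$ on $\Sigma^1_1$, so the first reflection theorem, applied uniformly in the codes, yields a $\Delta^1_1$ such family (if some $U_x=\emptyset$, $\s X$ is unsolvable and there is nothing to prove). Next, for a partial function $f\subseteq\baire\times D$ let $\Phi(f)$ assert: $f$ is a partial function; $f(x)\in U_x$ for all $x\in\dom(f)$; $\dom(f)$ is closed under the constraints of $\s X$ (whenever $(x_1,\dots,x_n)\in R^{\s X}$ meets $\dom(f)$, every $x_j\in\dom(f)$ and $(f(x_1),\dots,f(x_n))\in R^{\s D}$); and the family obtained from $\ip{U_x}$ by replacing $U_x$ with $\{f(x)\}$ for $x\in\dom(f)$ is still cycle-consistent with no empty domain. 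The claim is that $\s X$ has a ($\Delta^1_1$) solution if and only if there is a sequence $\ip{f_i:i\in\N}$ of ($\Delta^1_1$) partial functions, each satisfying $\Phi$, with $\baire=\bigcup_i\dom(f_i)$.

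The ``only if'' direction is immediate: a solution is one such $f_i$. For the converse, put $n(x)=\min\{i:x\in\dom(f_i)\}$ and $f(x)=f_{n(x)}(x)$. Since $\dom(f_i)$ is closed under constraints, it is a union of connected components, so $n(\cdot)$ is constant on each component $C$; hence $f\res C$ coincides with $f_{n_C}\res C$, which is a homomorphism of the induced substructure on $C$, and as every constraint of $\s X$ lies inside one component, $f$ is a homomorphism of $\s X$ (and it is $\Delta^1_1$ if the $f_i$ are). To invoke \cite[Theorem 3.6]{effectivization} one checks that the conditions comprising $\Phi$ are closure and independence properties in the required sense --- here greedy robustness does the work, since it lets a $\Phi$-function be grown variable-by-variable without ever leaving the class, and disjoint-component $\Phi$-functions combine --- so a Borel cover by $\Phi$-functions can be refined to a $\Delta^1_1$ one. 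As $\s X$ trivially has a (one-element) Borel cover whenever it has a Borel solution, this yields a $\Delta^1_1$ cover and hence, by the merge above, a $\Delta^1_1$ solution.

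The main obstacle is the tension between two demands on $\Phi$. The merge wants $\Phi$-functions to have constraint-closed (hence component-closed) domains, so that partial assignments on overlapping regions cannot contradict one another; but the effectivization meta-theorem wants $\Phi$ to carry a genuinely \emph{local}, one-variable-at-a-time closure structure, and the intermediate stages of such a growth process are not component-closed. Reconciling these is precisely where the $2$-semilattice property of $(\star)$ and the restriction to locally countable instances are used: the former guarantees that the ``fix one more variable, re-propagate'' step never fails, so a component-closed witness is assembled in $\omega$ local steps, and the latter keeps each component and each propagation countable, so that cycle-consistency really behaves as a $\Pi^1_1$-on-$\Sigma^1_1$ condition for which reflection is available. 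Pinning down a version of $\Phi$ that literally satisfies the hypotheses of \cite[Theorem 3.6]{effectivization}, and verifying that the passage to the cycle-consistent reduct and the merge are carried out $\Delta^1_1$-uniformly in the Borel codes, is where the real work lies.
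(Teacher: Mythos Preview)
Your approach diverges substantially from the paper's, and the gap you flag at the end is real and not obviously closable.

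The paper does not work with cycle-consistency or with an abstract covering by component-closed partial solutions. Instead it observes that $\s D$ is pp-generated by just two relations, $R_\pi$ (the graph of the $3$-cycle $\pi=(rps)$) and $R_\star(x,y,z)\Leftrightarrow x\in\{p,s\}\wedge(x=s\vee y=z)$, and works directly with instances in this reduced signature. Given a locally countable $\Delta^1_1$ instance with a Borel solution $\tilde f$, the paper computes only the \emph{arc}-consistency reduction $\ip{U_x}$ (which is $\Delta^1_1$ by local countability), and then classifies each variable as fixed, critical, or free according to whether $|U_x|=1,2,3$. On fixed and critical variables $f$ is defined outright: the key move is that for critical $x$ with $U_x=\{a,b\}$ one sets $f(x)=a\star b$, and one checks this is compatible with all $R_\pi$ and $R_\star$ constraints touching non-free variables. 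What remains on the free variables reduces to the single demand $d(x,y)=i\Rightarrow g(y)=\pi^i g(x)$ for a $\Delta^1_1$ weighted distance $d$ built from the $R_\pi$-edges and the $R_\star$-constraints whose first coordinate is fixed to $p$. That is a graph-of-permutation problem, hence already covered by Theorem~\ref{thm: dd effective}, and since $\tilde f$ restricted to the free variables is a Borel solution of it, that theorem supplies a $\Delta^1_1$ one. No new instance of \cite[Theorem 3.6]{effectivization} is invoked; the appeal is entirely to the dual-discriminator case already in hand.

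Your $\Phi$, by contrast, asks for partial solutions with constraint-closed (hence component-closed) domain. This is exactly what makes your merge trivial---$n(\cdot)$ is constant on components---but it is also what prevents $\Phi$ from fitting the hypotheses of \cite[Theorem 3.6]{effectivization}. That theorem handles properties built from local closure and independence clauses, each touching a bounded tuple of points, as in the dual-discriminator proof. ``$\dom(f)$ is a union of components and $f$ is a homomorphism there'' is not of this form: adding a single point to $\dom(f)$ violates it, and closing up forces you to solve an entire countable component at once, which is the original problem. The $2$-semilattice robustness you cite does let you \emph{extend} a partial assignment one variable at a time without getting stuck, but the intermediate stages are not in your $\Phi$, so you cannot present the growth as a chain of $\Phi$-preserving local closures. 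You correctly diagnose this tension in your final paragraph; the paper resolves it not by engineering a cleverer $\Phi$ but by doing the structural reduction explicitly and landing in a case where the genuinely local $\Phi$ of Theorem~\ref{thm: dd effective} already applies.
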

\begin{proof}
$\s D$ is generated under pp definitions by the following relations: \begin{itemize}
    \item $R_\pi$, the graph of the cyclic permutation $\pi=(rps).$
    \item $R_{\star}(x,y,z):\lra x\in \{p,s\}\wedge (x=s\vee y=z)$.
\end{itemize}

Let $\s X$ be a locally countable $\Delta^1_1$ instance of $\s D$ with a Borel solution $\tilde f$.  Let $\s A$ be the 1-minimal closure of $\emptyset$. Since $\s X$ is locally countable, $A$ is $\Delta^1_1$. Let $U_x=\{d: (x,d)\not\in A\}$. Note that $U_x$ is a witness to arc-consistency for $\s X$. 

Say that $x$ is fixed if $|U_x|=1$ and critical if $|U_x|=2$ and free otherwise. Let $G$ be the weighted directed Borel graph with edges $(x,y)$ and $(y,x)$ of weight 0 whenever $R(z,x,y)$ with $z$ fixed and $p\in U_z$, edges $(x,y)$ of weight 1 whenever $R_\pi(x,y)$, and edges $(y,x)$ of weight -1 whenever $(x,y)$ is an edge of weight 1. Then, let $d(x,y)$ be the sum of weights along a directed path from $x$ to $y$ modulo 3 if such a path exists and $\infty$ otherwise. This is well defined since $\s X$ must be cycle-consistent. Note that if $d(x,y)<\infty$ and $x$ is free (or critical or fixed) then so is $y$. If $d(x,y)=i<\infty$ and $f$ is a solution to $\s X$, we must have $f(y)=\pi^i(x).$

Define $f$ as follows: 
\begin{itemize}
    \item for $x$ fixed, set $f(x)=a$ if $a\in U_x$
    \item for $x$ critical with $U_x=\{a,b\}$, set $f(x)=a\star b$
\end{itemize}

If $d(x,y)=i$ and $f(x)=a$ (in particular $x$ is not free), then $U_y=\pi^i(U_x)$, so $f(y)=\pi^i(x)$. And, if $R_{\star}(x,y,z)$, we must have that $x$ is not free. If $x$ is critical, then $U_x=\{p,s\}$, so $f(x)=s$ and this instance of $R_{\star}$ is satisfied by any extension of $f$ (in particular $f$ is a partial homomorphism). If $x$ is fixed, then this instance of $R_{\star}$ will be satisfied by an extension $g$ of $f$ if and only if $g(y)=g(z)$.

So, we want to find a $\Delta^1_1$ function $g$ on the free variables so that $d(x,y)=i$ implies $g(y)=\pi^i(g(x))$. We know that we have a Borel such function (namely the restriction of $\widetilde f$ to the free variables). And, such functions are effectivizable by the previous theorem.

\end{proof}

This last example is archetypal of bounded width structures, which can be solved by an intricate greedy algorithm. However Problem \ref{prb: bddwidth} remains open even if we restrict to locally countable instances. We end with a conditional result:

\begin{prop}
If every bounded width structure is effectivizable and every structure of the form $\bb F(3)$ is $\bs12$-complete, then the following are equivalent:
\begin{enumerate}
    \item $\csp_B(\s D)$ is effectivizable
    \item $\csp_B(\s D)$ is $\bp11$
    \item $\csp_B(\s D)$ is not $\bs12$-complete
\end{enumerate}
\end{prop}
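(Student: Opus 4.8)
The plan is to prove the chain of implications $(1)\Rightarrow(2)\Rightarrow(3)\Rightarrow(1)$, where the first two arrows are unconditional and only the last uses the two hypotheses. The implication $(1)\Rightarrow(2)$ is essentially the observation recorded just before Theorem~\ref{thm:hellnesetril}: effectivizability is one of the standard ``first pass'' properties that yields an upper bound on the projective complexity of $\csp_B(\s D)$, and in fact the relativization argument shows that if every $\Delta^1_1$ instance with a Borel solution has a $\Delta^1_1$ solution, then (by relativizing to an arbitrary parameter) membership in $\csp_B(\s D)$ is witnessed by a lightface $\Sigma^1_1$-in-the-parameter solution, so $\csp_B(\s D)$ is $\bp11$. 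The implication $(2)\Rightarrow(3)$ is immediate since a $\bp11$ set cannot be $\bs12$-complete (a $\bs12$-complete set is properly $\bs12$, in particular not $\bp11$).

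The substance is $(3)\Rightarrow(1)$, and here is where the two hypotheses enter. Suppose $\csp_B(\s D)$ is not $\bs12$-complete. First, $\s D$ must be tractable: if $\s D$ were intractable, then by Theorem~\ref{thm:intractable} $\csp_B(\s D)$ would be $\bs12$-complete, a contradiction. Now among tractable structures, I would split on whether $\s D$ has bounded width. If $\s D$ does not have bounded width, then by Corollary~\ref{cor:not bdd width} $\s D$ simply constructs $\bb F(3)$ for some finite field $\bb F$; by Theorem~\ref{thm:simple reduction} this gives a Borel reduction of $\csp_B(\bb F(3))$ to $\csp_B(\s D)$, and by the second hypothesis $\csp_B(\bb F(3))$ is $\bs12$-complete, whence $\csp_B(\s D)$ is $\bs12$-hard; being a projection of a Borel relation (``there exists a Borel solution''), $\csp_B(\s D)$ is always $\bs12$, so it would be $\bs12$-complete, again a contradiction. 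Therefore $\s D$ has bounded width, and the first hypothesis gives that $\csp_B(\s D)$ is effectivizable, which is exactly~(1).

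The main obstacle—or rather the main point requiring care—is the book-keeping in $(1)\Rightarrow(2)$: one must check that effectivizability as stated (a statement about $\Delta^1_1$ instances) genuinely relativizes to give a boldface $\bp11$ upper bound on the set of \emph{all} Borel codes of solvable instances, which is the standard but not entirely trivial move of running the lightface argument over an arbitrary real parameter coding the instance. One should also make sure the reduction $\csp_B(\bb F(3))\le_B\csp_B(\s D)$ supplied by Theorem~\ref{thm:simple reduction} preserves whatever regularity is needed (it does, since that theorem provides maps $\Delta^1_1$ in the codes). Finally, it is worth noting that this proposition is genuinely conditional: absent the two hypotheses one cannot rule out a bounded-width $\s D$ that fails to be effectivizable, nor an $\bb F(3)$ that is secretly $\bp11$, so the proof cannot avoid invoking both assumptions.
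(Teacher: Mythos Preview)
Your proposal is correct and follows essentially the same route as the paper: the paper also treats $(1)\Rightarrow(2)\Rightarrow(3)$ as clear and, for $(3)\Rightarrow(1)$, argues that failure of $\bs12$-completeness forces $\s D$ not to simply construct any $\bb F(3)$ (hence $\pol(\bb F(3))\notin\HS(\pol(\s D))$, hence bounded width), after which the first hypothesis finishes. Your explicit tractability check is harmless but redundant, since intractable structures are already not of bounded width and are covered by Corollary~\ref{cor:not bdd width}; otherwise the arguments coincide.
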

\begin{proof}
$(1)\Rightarrow (2)\Rightarrow (3)$ is clear. If $(3)$ holds, then $\csp_B(\s D)$ must not pp construct $\bb F(3)$ for any finite field $\bb F$. In particular, $\pol(\bb F(3))\not\in HS(\pol(\s D))$, but then $\s D$ is bounded width, and $(1)$ holds.
\end{proof}

It is not clear how plausible the assumptions in this proposition are, but the resulting equivalences are appealing.

\bibliographystyle{hplain}
\bibliography{refs}

\pagebreak
\appendix
\section{Edge coloring} \label{appendix:codings}

In this appendix, we prove that the set of Borel edge 3-colorable graphs is $\bs12$-complete. Since edge coloring involves a restricted class of instances, this does not follow from Theorem \ref{thm:intractable}, but the classical proof that edge 3-coloring is NP-complete still adapts to the Borel setting. We also take this as an opportunity to show in detail how to verify that a construction is $\Delta^1_1$ in the codes.

First, we recall the classical NP-completeness proof. Roughly, we will reduce 3SAT to 3 edge coloring by coding variables as pairs of edges, and the values that the variable can take will be coded into whether the corresponding edges can receive the same color. We need some lemmas.

\begin{lem}[Inverter lemma] \label{lem: inverter lemma}
There is a graph $I$ with distinguished edges $a,b,c,d,e$ so that a 3-coloring $f$ of $a,b,c,d,e$ extends to a coloring of $H$ if and only if one of the following holds:
\[f(a)=f(b) \quad\mbox{and }f(e)\not=f(c)\not=f(d)\not=f(e)\]

or

\[f(c)=f(d) \quad\mbox{and }f(e)\not=f(a)\not=f(b)\not=f(e)\]
\end{lem}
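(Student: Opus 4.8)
The plan is to build the inverter gadget $I$ out of the standard ``NAE/XOR'' coloring gadgets used in the classical proof that edge $3$-coloring is NP-complete, and then verify the claimed extension property by a finite case check. First I would recall the basic building block: for a vertex $v$ of degree $3$ in a graph, a proper edge $3$-coloring assigns the three incident edges the three distinct colors $\{0,1,2\}$. Thus a degree-$3$ vertex acts as a ``not-all-equal / all-distinct'' constraint on its three incident edges, and more usefully, chaining degree-$3$ vertices lets one propagate and negate equality relations between distinguished edges. The classical inverter is assembled from a small number (a constant-sized graph, typically on the order of a dozen vertices) of such vertices arranged so that the five boundary edges $a,b,c,d,e$ satisfy exactly the stated disjunction.

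The key steps, in order, would be: (1) Exhibit the explicit graph $I$ (a figure), specifying which vertices the distinguished edges $a,b,c,d,e$ attach to, making sure every non-boundary vertex has degree exactly $3$ so the all-distinct constraint applies cleanly. (2) Observe that $e$ plays the role of a ``control'' edge: its color forces one of two regimes on the rest of the gadget. (3) In the regime where the internal configuration forces $f(a)=f(b)$, trace the forced colors through the gadget and check that $f(c),f(d),f(e)$ must be pairwise distinct, and conversely that any boundary coloring with $f(a)=f(b)$ and $\{f(c),f(d),f(e)\}$ all distinct does extend --- this is a finite verification over the $3^5$ boundary colorings, cut down drastically by symmetry (permuting the three colors) and by the forced propagation. (4) Repeat the symmetric check for the regime forcing $f(c)=f(d)$ with $\{f(a),f(b),f(e)\}$ all distinct. (5) Finally argue no other boundary coloring extends: any extension colors the control edge $e$, and each choice of $f(e)$ lands us in one of the two analyzed regimes, so the disjunction is exhaustive.

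I expect the main obstacle to be step (1) together with steps (3)--(4): actually pinning down a correct small gadget and then doing the case analysis without error. The gadget has to simultaneously (a) force an equality on \emph{one} of the two pairs $\{a,b\}$ or $\{c,d\}$, (b) force all-distinctness on the complementary triple, and (c) genuinely allow \emph{both} regimes (so that the gadget is an ``inverter'' rather than a constant constraint). Getting all three properties from a single fixed graph is the delicate part; the verification itself is routine but tedious, and the cleanest writeup will lean heavily on color-permutation symmetry and on the ``degree-$3$ vertex $=$ all-distinct'' principle to prune the casework. Once $I$ is in hand, the extension property is a mechanical check, and the lemma follows. (In the write-up I would present $I$ as a labeled figure and then give the case analysis as a short table rather than prose.)
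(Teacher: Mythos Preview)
Your approach is essentially the paper's: the paper's entire proof consists of a single explicit figure---a gadget on seven internal degree-$3$ vertices with five pendant edges $a,b,c,d,e$---and the remark ``such a graph is pictured below,'' with the case verification left implicit. Your plan (exhibit the gadget, then verify via the degree-$3$-vertex $=$ all-distinct principle and a symmetry-reduced case check) is exactly right and in fact more thorough than what the paper writes out; the only remaining work is step~(1), actually drawing the correct gadget.
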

\begin{proof}
Such a graph is pictured below in Figure \ref{inverter}.

\end{proof}
\begin{figure}[h]
    \centering
    \begin{tikzpicture}
\tikzstyle{vtx}=[circle, draw=black, fill=black, inner sep=0pt, minimum size=4pt]
\node[vtx] (1) at (0,2) {};
\node[vtx] (2) at (0,0) {};
\node[vtx] (3) at (3,4) {};
\node[vtx] (4) at (6,2) {};
\node[vtx] (5) at (6,0) {};
\node[vtx] (6) at (1,2) {};
\node[vtx] (7) at (1,0) {};
\node[vtx] (8) at (2,1) {};
\node[vtx] (9) at (3,3) {};
\node[vtx] (10) at (4,1) {};
\node[vtx] (11) at (5,2) {};
\node[vtx] (12) at (5,0) {};

\path[draw=black, -] (1) -- (6) node[midway, above, color=black] {$a$} ;
\path[draw=black, -] (2) -- (7) node[midway, above, color=black] {$b$} ;
\path[draw=black, -] (3) -- (9) node[midway, left, color=black] {$e$};
\path[draw=black, -] (4) -- (11) node[midway, above, color=black] {$c$};
\path[draw=black, -] (5) -- (12) node[midway, above, color=black] {$d$};
\path[draw=black, -] (6) -- (8);
\path[draw=black, -] (6) -- (11);
\path[draw=black, -] (7) -- (10);
\path[draw=black, -] (7) -- (12);
\path[draw=black, -] (8) -- (9);
\path[draw=black, -] (8) -- (12);
\path[draw=black, -] (9) -- (10);
\path[draw=black, -] (10) -- (11);
\end{tikzpicture}
\hspace{30pt}
\begin{tikzpicture}
\path[draw=black] (1,3) -- (0,3) node[above]{$b$};
\path[draw=black] (1,3) -- (2,3) node[above]{$d$};
\path[draw=black] (1,3.5) -- (0,3.5) node[above]{$a$};
\path[draw=black] (1,3.5) -- (2,3.5) node[above]{$c$};
\path[draw=black] (1,3.5) -- (1,4)node[left]{$e$};
\node[circle, draw=black, fill=white] (H) at (1,3.25) {$H$};
\end{tikzpicture}
    \caption{An inverter component and diagram representation}
    \label{inverter}
\end{figure}
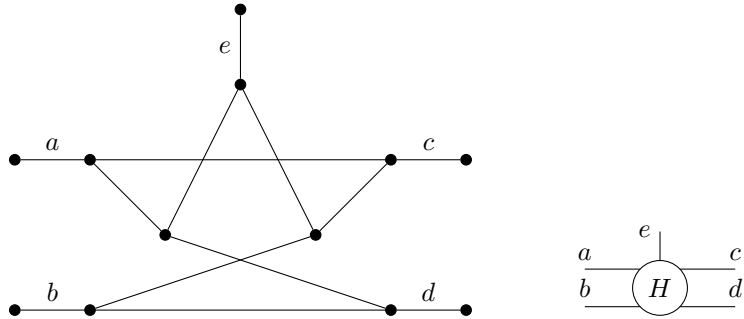

We will represent $H$ diagramatically as in the right image of Figure \ref{inverter}. In the following, we will refer to $a,b,c,d,$ and $e$ as \textbf{coding edges}. And, we will not include the degree 1 vertices when we refer to vertices of $H$. (The coding edges will connect to vertices in other components). %describe coding edges partition

\begin{lem}[Variable setting lemma] \label{lem: variable setting lemma}
For any $n$, there is a graph $V_n$ with $n$ pairs of distinguished edges $e_{i,0},e_{i,1}$ so that a 3 coloring $f$ of $e_{1,0},e_{1,1}...,e_{n,0},e_{n,1}$ extends to an edge 3-coloring of $V_n$ if and only if one of the following holds
\[(\forall i) \; f(e_{i,0})=f(e_{i,1})\]
or
\[(\forall i) \; f(e_{i,0})\not=f(e_{i,1})\]
\end{lem}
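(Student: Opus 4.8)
The plan is to build $V_n$ by chaining together $n-1$ copies of the inverter component $H$ from Lemma~\ref{lem: inverter lemma}, together with one auxiliary edge $e$ shared between consecutive inverters, so that the ``parity'' of the pair $(e_{i,0},e_{i,1})$ is forced to agree across all $i$. Concretely, I would introduce a common edge $e^*$ (the ``$e$'' port of every inverter, all identified to a single edge of $V_n$) and, for each $i<n$, attach one copy $H_i$ of the inverter with its $(a,b)$ ports equal to $(e_{i,0},e_{i,1})$, its $(c,d)$ ports equal to $(e_{i+1,0},e_{i+1,1})$, and its $e$ port equal to $e^*$. The reason for routing everything through a single shared edge $e^*$ is that the inverter's dichotomy is stated relative to $e$: in one case $f(a)=f(b)$ and $e,c,d$ are pairwise distinct; in the other $f(c)=f(d)$ and $e,a,b$ are pairwise distinct. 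So once $f(e^*)$ is fixed, each $H_i$ forces that exactly one of the pairs $(e_{i,0},e_{i,1})$, $(e_{i+1,0},e_{i+1,1})$ is monochromatic while the other pair is ``rainbow with $e^*$'' (i.e.\ its two edges take the two colors different from $f(e^*)$).

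The key step is then the following propagation argument. Suppose $f$ is a $3$-coloring of all the coding edges $e_{i,0},e_{i,1}$ that extends to $V_n$; fix the forced value $f(e^*)$. For each $i$ say the pair $P_i=(e_{i,0},e_{i,1})$ is of type $\mathtt{eq}$ if $f(e_{i,0})=f(e_{i,1})$ and of type $\mathtt{split}$ if $f(e_{i,0})\neq f(e_{i,1})$. The inverter constraint on $H_i$ says: either ($P_i$ is $\mathtt{eq}$ and $P_{i+1}$ is $\mathtt{split}$ with both its colors $\neq f(e^*)$) or ($P_{i+1}$ is $\mathtt{eq}$ and $P_i$ is $\mathtt{split}$ with both its colors $\neq f(e^*)$). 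In particular, for every $i$, exactly one of $P_i,P_{i+1}$ is $\mathtt{eq}$. Chaining this down the line: if $P_1$ is $\mathtt{eq}$ then $P_2$ is $\mathtt{split}$, hence $P_3$ is $\mathtt{eq}$, etc.; but also $H_2$ forces exactly one of $P_2,P_3$ to be $\mathtt{eq}$, and we already have $P_2$ $\mathtt{split}$, $P_3$ $\mathtt{eq}$ --- consistent --- but now $H_1$ and $H_2$ together actually force $P_1$ and $P_3$ to have the \emph{same} type, and similarly all odd-indexed $P_i$ agree and all even-indexed $P_i$ agree, with the two parities opposite. That is the wrong conclusion, so I will instead need to wire the inverters differently: route the output so that consecutive inverters share the same oriented pair (e.g.\ $H_i$ has ports $(a,b)=(c,d)$-image equal to $(e_{i,0},e_{i,1})=(e_{i+1,0},e_{i+1,1})$ in the \emph{double-inverted} configuration, i.e.\ use a pair of inverters per link so that the composite behaves as an identity on the ``type'' while still forcing it to be defined). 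The cleanest fix: for each $i<n$ use \emph{two} inverters $H_i',H_i''$ in series, so that $H_i'$ links $P_i$ to a fresh internal pair $Q_i$ and $H_i''$ links $Q_i$ to $P_{i+1}$; the composite then forces $\text{type}(P_i)=\text{type}(P_{i+1})$. Hence all $P_i$ have the same type, which is exactly the stated dichotomy.

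Conversely I would check that both global configurations are realizable: given a monochromatic assignment (resp.\ a uniformly split assignment) of the coding edges compatible with a chosen $f(e^*)$, each inverter's local extendability (Lemma~\ref{lem: inverter lemma}) provides an extension on $H_i',H_i''$, and since the inverter components share only the specified coding edges and the edge $e^*$ (all of which are already colored), these extensions glue to a $3$-coloring of $V_n$. I expect the main obstacle to be precisely the bookkeeping in the previous paragraph: getting the orientation/port identifications of the chained inverters right so that the composite enforces ``same type'' rather than ``alternating type,'' and verifying that the shared edge $e^*$ (or the internal pairs $Q_i$) do not introduce unintended constraints or parity flips. Once the wiring is fixed, both directions are routine applications of Lemma~\ref{lem: inverter lemma} plus the observation that distinct inverter copies overlap only in already-colored edges.
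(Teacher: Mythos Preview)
Your double-inverter chain is the right idea and is essentially what the paper does (the figure for $S_4$ links consecutive output pairs by two inverters in series, so that the ``eq/split'' type propagates unchanged). The genuine bug is the shared edge $e^*$. First, identifying all the $e$ ports to a single edge of $V_n$ is not a legal graph move: the internal endpoint of $e$ in $H$ already has degree $3$, so gluing several copies of $H$ along $e$ produces a vertex of degree larger than $3$ and the resulting graph is never edge $3$-colorable. Second, even if you only meant that all the $e$ ports receive a common colour, the backward direction fails in the ``all split'' case: when $P_i=(e_{i,0},e_{i,1})$ is split, the inverter forces $f(e^*)$ to be the unique colour not used by $P_i$, so if two split pairs $P_i$ and $P_j$ use different colour sets there is no admissible value of $f(e^*)$.

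The fix is simply not to share: give every inverter its own dangling $e$ edge attached to a fresh degree-$1$ vertex (this is what the paper's figure does). Your forward argument never actually needs the sharing, since the inverter lemma already says that exactly one of $(a,b)$ and $(c,d)$ is monochromatic regardless of $e$, so ``type'' propagates through each double link without reference to $e$. For the backward direction with separate $e$ edges you colour locally: in the ``all split'' case set $f(e_{H_i'})$ to the colour missing from $P_i$ and $f(e_{H_i''})$ to the colour missing from $P_{i+1}$, then pick any value for $Q_i$; in the ``all eq'' case pick any colour for $e_{H_i'}$, let $Q_i$ take the other two, and set $f(e_{H_i''})=f(e_{H_i'})$. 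The copies now overlap only in the already-coloured pairs $P_i$ and $Q_i$, so the local extensions from Lemma~\ref{lem: inverter lemma} glue.
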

\begin{proof}
An example of $S_4$ is drawn in Figure \ref{variable}, with the $e_{i,j}$s being the eight edges connected to vertices of degree one. One can build $S_n$ for general $n$ by following a similar pattern (or chaining together copies of $S_4$.)
\begin{figure}[h]
    \centering
  \begin{tikzpicture}[scale=.8]
\tikzstyle{blnk}=[draw=none]
\tikzstyle{vtx}=[circle, draw=black, fill=black, inner sep=0pt, minimum size=4pt]
\node[vtx] (1) at (0,-.5) {};
\node[vtx] (2) at (.25,-.5) {};
\node[blnk] (3) at (0,.5) {};
\node[blnk] (4) at (.25,.5) {};
\node[blnk] (5) at (2.5,.5) {};
\node[blnk] (6) at (3.5,.5) {};
\node[vtx] (7) at (4.5,.5) {};
\node[blnk] (8) at (1.25,.75) {};
\node[blnk] (9) at (2.5,.75) {};
\node[blnk] (10) at (3.5,.75) {};
\node[vtx] (11) at (4.5,.75) {};
\node[blnk] (12) at (0,1.5) {};
\node[blnk] (13) at (.25,1.5) {};
\node[blnk] (14) at (1.75,1.5) {};
\node[blnk] (15) at (2.5,1.5) {};
\node[blnk] (16) at (3.25,1.5) {};
\node[blnk] (17) at (.25,2.5) {};
\node[blnk] (18) at (1,2.5) {};
\node[blnk] (19) at (2.25,2.75) {};
\node[blnk] (20) at (3.25,2.75) {};

\node[vtx] (22) at (-1,3.5) {};
\node[blnk] (23) at (0,3.5) {};
\node[blnk] (24) at (1,3.5) {};
\node[blnk] (25) at (2.25,3.5) {};
\node[vtx] (26) at (-1,3.75) {};
\node[blnk] (27) at (0,3.75) {};
%\node[blnk] (28) at (2,3.75) {};
\node[blnk] (29) at (3.25,3.75) {};
\node[blnk] (30) at (3.5,3.75) {};
\node[vtx] (31) at (3.25,4.75) {};
\node[vtx] (32) at (3.5,4.75) {};

\foreach \x/\y in {1/3, 2/4, 3/12, 4/5, 4/13, 5/6, 6/7, 9/10, 10/11, 10/30, 12/23, 20/29, 22/23, 23/24, 26/27, 27/29, 29/31, 30/32}{
\path[draw=black] (\x) -- (\y);}

\draw (9) .. controls (.75,.75) and (1.75,1.5) .. (13);
%\draw (9) .. controls (8) and (14) .. (13);
%\draw (13) .. controls (17) and (18) .. (24);
\draw (13) .. controls (.25,3) and (1,2) .. (24);
%\draw (24) .. controls (25) and (19) .. (20);
\draw (24) .. controls (2.75,3.5) and (1.75,2.75) .. (20);
%\draw (20) .. controls (16) and (15) .. (9);
\draw (20) .. controls (3.25,1) and (2.5,2) .. (9);

\node[circle, draw=black, fill=white, rotate=270] (H1) at (.125,.5) {$H$};
\node[circle, draw=black, fill=white, rotate=270] (H2) at (.125,1.5) {$H$};

\node[circle, draw=black, fill=white, rotate=180] (H3) at (0,3.625) {$H$};
\node[circle, draw=black, fill=white, rotate=180] (H4) at (1,3.625) {$H$};

\node[circle, draw=black, fill=white, rotate=90] (H3) at (3.375,2.75) {$H$};
\node[circle, draw=black, fill=white, rotate=90] (H4) at (3.375,3.75) {$H$};

\node[circle, draw=black, fill=white] (H3) at (2.5,.625) {$H$};
\node[circle, draw=black, fill=white] (H4) at (3.5,.625) {$H$};

\end{tikzpicture}
    \caption{A variable setting component}
    \label{variable}
\end{figure}
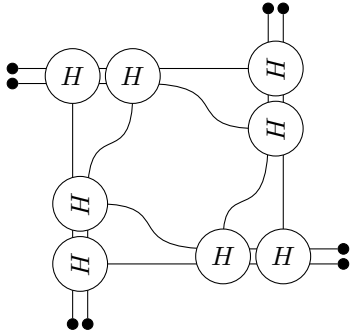
\end{proof}

As above, we will refer to the $e_{i,j}$s as coding edges and ignore the degrees one vertices in the following.
%describe coding edges partition

\begin{lem}[Or gate lemma] \label{lem: or gate}
There is a graph $O$ with three distinguished pairs of edges $e_{i,j}$ for $i=1,2,3$ and $j=0,1$ so that a 3 coloring $f$ of the $e_{i,j}$ extends to a coloring of $O$ if and only if $f(e_{i,0})=f(e_{i,1})$ for some $i$. 
\end{lem}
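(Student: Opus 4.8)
The plan is to build $O$ as an explicit small graph, exactly as in the classical NP-completeness proof of edge 3-colorability, and then verify the coloring constraint by a finite case analysis. First I would recall the standard gadget: an \emph{or gate} on two inputs can be built from a single inverter component $H$ together with a constant number of extra vertices, using the fact (from Lemma \ref{lem: inverter lemma}) that $H$ forces exactly one of its two input pairs to be ``true'' (monochromatic) and the other to be ``false'' (rainbow on $e$). Concretely, feed the pairs $(e_{1,0},e_{1,1})$ and $(e_{2,0},e_{2,1})$ into the two input sides of a copy of $H$; then $H$ is colorable iff at least one of the two input pairs is monochromatic, which is precisely a binary or. To get a ternary or gate, chain two binary or gates: route the pair $(e_{3,0},e_{3,1})$ and a fresh ``output'' pair of the first binary gate into a second copy of $H$. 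I would draw the resulting graph in a figure, mark the six coding edges $e_{i,j}$, and again suppress the degree-one vertices as in the previous lemmas.

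The key steps, in order, are: (1) state precisely which copies of $H$ are used and how their coding edges are identified with the $e_{i,j}$ and with the internal linking edges; (2) observe that any partial 3-coloring of the six coding edges extends to the internal linking edges in at most finitely many ways; (3) for each such extension, invoke Lemma \ref{lem: inverter lemma} on each copy of $H$ to determine colorability, and check that the disjunction of the resulting conditions simplifies to ``$f(e_{i,0})=f(e_{i,1})$ for some $i$''; (4) conversely, given that some input pair is monochromatic, exhibit an explicit extension (choose the output/linking edges so that each $H$ sees at least one monochromatic input, then apply Lemma \ref{lem: inverter lemma} to extend over the $H$'s). Since everything is finite and the inverter lemma does the real work, steps (2)--(4) are a bounded check rather than a genuine argument.

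The main obstacle is purely bookkeeping: making sure the chaining of two copies of $H$ does not accidentally impose a spurious constraint on the ``output'' pair that leaks back and forces, say, $e_{1,0}=e_{1,1}$ unconditionally, or that prevents a valid combination. This is where one must be careful about the asymmetric behavior of $H$ (exactly one input side is monochromatic, and the $e$-edge is rainbow relative to the true side), so the output pair of the first gate has to be wired into the second $H$ on the correct side and with the correct identification of which edge plays the role of $e$. I expect to resolve this by picking the wiring so that the ``or'' is computed correctly regardless of which input of the first gate is the true one, and then exhibiting the figure so the reader can verify the finitely many cases directly, exactly in the style of the proofs of Lemmas \ref{lem: inverter lemma} and \ref{lem: variable setting lemma}.
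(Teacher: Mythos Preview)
Your plan has a real gap: the claim that feeding two input pairs into the two sides of a single copy of $H$ yields a binary OR is false. Reread the inverter lemma. With the edge $e$ left free, a coloring of $a,b,c,d$ extends to $H$ iff \emph{exactly one} of the pairs $(a,b)$, $(c,d)$ is monochromatic (the ``false'' side must be rainbow together with $e$, which forces it to use two distinct colors; the ``true'' side must be monochromatic). In particular, if both input pairs are monochromatic the gadget is \emph{not} colorable. So $H$ by itself computes XOR of ``monochromaticity,'' not OR, and chaining two such gadgets gives ternary parity rather than a ternary disjunction. You even state the ``exactly one'' behavior correctly in the sentence before, so the error is in the next step, not in your reading of the lemma.

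The paper's construction is structurally different from your chaining idea. It uses three copies of $H$ in \emph{parallel}, one per input pair, each acting as a genuine inverter: the bottom pair of each $H_i$ is the input $(e_{i,0},e_{i,1})$, the $e$-edge dangles, and the top pair is an output that is forced to be non-monochromatic exactly when the input is monochromatic. The three output pairs are then attached as pendants to the six internal vertices of a $7$-cycle (the straight segment plus the arc through the top vertex in the figure). A short check shows this cycle-with-pendants is edge $3$-colorable iff not all three pendant pairs are monochromatic, i.e.\ iff at least one input was true. So the OR is realized by a single combining structure on the inverted signals, not by cascading binary gates. If you want to salvage a chaining approach you would first need an actual binary OR gadget, which requires more than one copy of $H$ (or $H$ plus additional edges) per gate; the parallel-inverters-into-a-cycle construction avoids this entirely.
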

\begin{proof}
Such a graph is shown in Figure \ref{or}, where the $e_{i,j}$s are the 6 edges along the bottom of the image.

\end{proof}
\begin{figure}[h]
    \centering
    
\begin{tikzpicture}[scale=.8]
\tikzstyle{vtx}=[circle, draw=black, fill=black, inner sep=0pt, minimum size=4pt]
\node[vtx] (1) at (0,0) {};
\node[vtx] (2) at (.5,0) {};
\node[vtx] (3) at (2.5,0) {};
\node[vtx] (4) at (3,0) {};
\node[vtx] (5) at (5,0) {};
\node[vtx] (6) at (5.5,0) {};
\node[vtx] (7) at (.5,1) {};
\node[vtx] (8) at (1.5,1) {};
\node[vtx] (9) at (3,1) {};
\node[vtx] (10) at (4,1) {};
\node[vtx] (11) at (5.5,1) {};
\node[vtx] (12) at (6.5,1) {};
\node[vtx] (13) at (0,2) {};
\node[vtx] (14) at (.5,2) {};
\node[vtx] (15) at (2.5,2) {};
\node[vtx] (16) at (3,2) {};
\node[vtx] (17) at (5,2) {};
\node[vtx] (18) at (5.5,2) {};
\node[vtx] (19) at (2.75,4.75) {};

\draw[draw=black] (13) arc (180:0:2.75);

\foreach \x/\y in {1/13, 2/14, 3/15, 4/16, 5/17, 6/18, 7/8, 9/10, 11/12, 13/18}{
\path[draw=black] (\x) -- (\y);}

\node[circle, fill=white, draw=black, rotate=270] (H1) at (.25,1) {$H$};
\node[circle, fill=white, draw=black, rotate=270] (H1) at (2.75,1) {$H$};
\node[circle, fill=white, draw=black, rotate=270] (H1) at (5.25,1) {$H$};

\end{tikzpicture}
    \caption{An or gate}
    \label{or}
\end{figure}
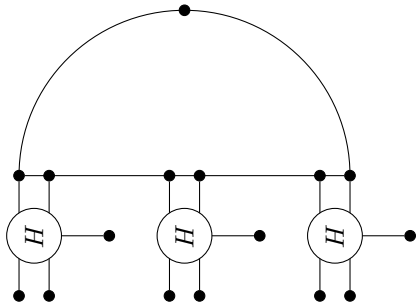

%describe coding edges lemma
Once again, we will refer to the $e_{i,j}$s as coding edges and ignore the degree one vertices incident to coding edges.

Combining these lemmas, the reduction from 3SAT is straightforward

\begin{thm}\label{thm:edge coloring np complete}
There is a polynomial time reduction from 3SAT to 3 edge coloring
\end{thm}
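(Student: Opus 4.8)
The plan is to chain together the three gadgets (inverter, variable-setting, or-gate) exactly as in the classical reduction of 3-edge-coloring NP-hardness, and then observe that each gadget behaves as a combinatorial ``black box'' on its coding edges, so the correctness proof reduces to bookkeeping about what 3-colorings of the coding edges propagate consistently.

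First I would set up the encoding of variables. Given a 3CNF formula $\varphi$ with variables $v_1,\dots,v_m$ and clauses $C_1,\dots,C_\ell$, count the number of literal-occurrences of each variable $v_i$, say $n_i$, and take one copy of the variable-setting component $V_{n_i}$ from Lemma \ref{lem: variable setting lemma} for each $v_i$. The pairs of coding edges $e_{i,k,0},e_{i,k,1}$ ($1\le k\le n_i$) will represent the $k$-th occurrence of $v_i$: by Lemma \ref{lem: variable setting lemma}, in any extendable 3-coloring either $f(e_{i,k,0})=f(e_{i,k,1})$ for all $k$ simultaneously, or $f(e_{i,k,0})\ne f(e_{i,k,1})$ for all $k$ simultaneously; we read the first case as ``$v_i$ is true'' and the second as ``$v_i$ is false''. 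For a negated occurrence of $v_i$ in a clause, splice in one copy of the inverter $I$ of Lemma \ref{lem: inverter lemma} between the occurrence's coding-edge pair and the clause gadget: by the lemma, the inverter's $(a,b)$ side and $(c,d)$ side always carry opposite ``equal/unequal'' status (with the auxiliary edge $e$ acting as the common reference color), so it flips the truth value exactly. Then, for each clause $C_j = (\lambda_{j,1}\vee\lambda_{j,2}\vee\lambda_{j,3})$, take one copy of the or-gate $O$ of Lemma \ref{lem: or gate} and identify its three coding-edge pairs $e_{t,0},e_{t,1}$ ($t=1,2,3$) with the (possibly inverted) coding-edge pairs representing $\lambda_{j,1},\lambda_{j,2},\lambda_{j,3}$. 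By Lemma \ref{lem: or gate}, the or-gate is extendable iff $f(e_{t,0})=f(e_{t,1})$ for at least one $t$, i.e. iff at least one literal of $C_j$ is satisfied.

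Next, correctness. For the forward direction, given a satisfying assignment of $\varphi$, pick a single base color for the reference edges, color each variable component's coding edges according to the truth value (equal pair $=$ true, unequal pair $=$ false), propagate through the inverters, and then each or-gate receives a 3-coloring of its coding edges with at least one ``equal'' pair, hence extends; the variable-setting and inverter components extend by construction. Because distinct gadgets share only coding edges, these partial colorings are compatible and together give a proper 3-edge-coloring of the whole graph. For the reverse direction, given a proper 3-edge-coloring $f$ of the constructed graph, Lemma \ref{lem: variable setting lemma} forces each variable component into a consistent ``all equal'' or ``all unequal'' state, which defines a truth assignment; Lemma \ref{lem: inverter lemma} guarantees the inverted occurrences carry the negated value; and Lemma \ref{lem: or gate} forces each clause gadget's inputs to include an ``equal'' pair, so every clause is satisfied. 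Finally, I would note the construction is polynomial time: each gadget has bounded size, we use $O(m)$ variable components of total size $O(\sum_i n_i)=O(\ell)$, $O(\ell)$ inverters, and $O(\ell)$ or-gates, and wiring coding edges together is linear, so the whole graph has size $O(|\varphi|)$ and is computed in polynomial time. This proves Theorem \ref{thm:edge coloring np complete}.

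I expect the only real subtlety — not so much an obstacle as a point requiring care — to be the exact interface conventions between gadgets: making sure that ``identifying'' a coding-edge pair of one gadget with a coding-edge pair of another really means gluing along the (non-degree-one) endpoints so that the glued edge is a single edge of the final graph, and checking that the degree-one vertices drawn in the figures are genuinely removed in this gluing so that no vertex ends up with an unintended extra edge. One must also verify that the reference/auxiliary edge $e$ of the inverter (and the analogous internal reference structure of the or-gate) can be consistently 3-colored across all gadgets simultaneously; this is immediate once one fixes a global color for these reference edges, since each lemma's characterization is stated relative to an arbitrary coloring of the distinguished edges. Everything else is the standard, routine verification carried out in the classical NP-completeness proof.
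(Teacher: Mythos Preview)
Your proposal is correct and follows essentially the same approach as the paper: build the graph from one variable-setting gadget per variable, one or-gate per clause, and an inverter spliced in for each negated literal, with ``equal pair'' coding True and ``unequal pair'' coding False. Your write-up is in fact more detailed than the paper's (which leaves the polynomial-time check and the gluing conventions implicit), but the construction and correctness argument are the same.
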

\begin{proof}
Given an instance $\s X$ of 3SAT in CNF, build a graph $g(\s X)$ with one copy of $S_n$ for each variable which appears in $n$ disjunctions, one copy of $O$ for each disjunction of $\s X$. Wire each of the outgoing pairs of edge of each copy of $S_n$ into the appropriate copy of $O$, passing them through a copy of $H$ if the corresponding variable appears negated in the disjunction. See Figure \ref{reduction} for an example.

By construction, an edge 3-coloring of $g(\s X)$ yields a satisfying assignment of $\s X$ by assigning a variable True if the coloring agree on the outgoing pairs of edges in its copy of $S_n$ and False otherwise. And any satisfying assignment gives a partial coloring of the edges by the same scheme, which then extends an edge 3-coloring.

%check that this works
\end{proof}

\begin{figure}[h]
    \centering

\begin{tikzpicture}[scale=.45]

\tikzstyle{nvtx}=[circle, draw=black, fill=black, inner sep=0pt, minimum size=0pt]
\tikzstyle{vtx}=[circle, draw=black, fill=black, inner sep=0pt, minimum size=2pt]

%%S2 Copy
\foreach \n/\x/\y in {1/0/0, 2/.5/0, 3/2/0, 4/3/0, 5/2/.5, 6/3/.5, 7/1/1, 8/1.5/1, 9/3.5/1, 10/4/1, 11/.5/1.5, 12/3/1.5, 13/3.5/1.5, 145/1.5/2, 14/2.5/2, 15/.5/2.5, 16/1/2.5, 17/3.5/2.5, 18/0/3, 19/.5/3, 20/2.5/3, 21/3/3, 22/1/3.5, 23/2/3.5, 24/1/4, 25/2/4, 26/3.5/4, 27/4/4}{
\node[nvtx] (\n) at (\x,\y) {};}

\foreach \x/\y in {1/18, 2/19, 3/4, 5/6, 8/145, 9/26, 10/27, 12/13, 14/20, 15/16, 22/23, 24/25}{\path[draw=black](\x)--(\y);} %add here

\foreach \x/\a/\b/\r in {11/90/0/.5, 3/270/180/1,5/270/180/.5, 12/270/180/.5, 145/0/90/.5, 20/0/90/.5, 17/270/180/.5, 21/0/90/1, 19/180/90/.5, 18/180/90/1, 6/270/360/.5, 4/270/360/1}{\draw[draw=black] (\x) arc (\a:\b:\r);}
%\filldraw [black] (1) circle (2pt);
%\filldraw [black] (2) circle (2pt);
%\filldraw [black] (26) circle (2pt);
%\filldraw [black] (27) circle (2pt);

\foreach \i/\x/\y in {1/.25/1.5, 2/.25/2.5}{
\node[circle, fill=white, draw=black, rotate=270] (H\i) at (\x,\y) {}; } %add here

\foreach \i/\x/\y in {3/3.75/1.5, 4/3.75/2.5}{
\node[circle, fill=white, draw=black, rotate=90] (H\i) at (\x,\y) {}; } %add here

%copy 2
\foreach \n/\x/\y in {1/0/0, 2/.5/0, 3/2/0, 4/3/0, 5/2/.5, 6/3/.5, 7/1/1, 8/1.5/1, 9/3.5/1, 10/4/1, 11/.5/1.5, 12/3/1.5, 13/3.5/1.5, 145/1.5/2, 14/2.5/2, 15/.5/2.5, 16/1/2.5, 17/3.5/2.5, 18/0/3, 19/.5/3, 20/2.5/3, 21/3/3, 22/1/3.5, 23/2/3.5, 24/1/4, 25/2/4, 26/3.5/4, 27/4/4}{
\node[nvtx] (\n) at (\x+5,\y) {};} %add here

\foreach \x/\y in {1/18, 2/19, 3/4, 5/6, 8/145, 9/26, 10/27, 12/13, 14/20, 15/16, 22/23, 24/25}{\path[draw=black](\x)--(\y);} 

\foreach \x/\a/\b/\r in {11/90/0/.5, 3/270/180/1,5/270/180/.5, 12/270/180/.5, 145/0/90/.5, 20/0/90/.5, 17/270/180/.5, 21/0/90/1, 19/180/90/.5, 18/180/90/1, 6/270/360/.5, 4/270/360/1}{\draw[draw=black] (\x) arc (\a:\b:\r);}
%\filldraw [black] (1) circle (2pt);
%\filldraw [black] (2) circle (2pt);
%\filldraw [black] (26) circle (2pt);
%\filldraw [black] (27) circle (2pt);

\foreach \i/\x/\y in {1/.25/1.5, 2/.25/2.5}{
\node[circle, fill=white, draw=black, rotate=270] (H\i) at (\x+5,\y) {}; } %add here

\foreach \i/\x/\y in {3/3.75/1.5, 4/3.75/2.5}{
\node[circle, fill=white, draw=black, rotate=90] (H\i) at (\x+5,\y) {}; } %add here

%copy 3
\foreach \n/\x/\y in {1/0/0, 2/.5/0, 3/2/0, 4/3/0, 5/2/.5, 6/3/.5, 7/1/1, 8/1.5/1, 9/3.5/1, 10/4/1, 11/.5/1.5, 12/3/1.5, 13/3.5/1.5, 145/1.5/2, 14/2.5/2, 15/.5/2.5, 16/1/2.5, 17/3.5/2.5, 18/0/3, 19/.5/3, 20/2.5/3, 21/3/3, 22/1/3.5, 23/2/3.5, 24/1/4, 25/2/4, 26/3.5/4, 27/4/4}{
\node[nvtx] (\n) at (\x+10,\y) {};} %add here

\foreach \x/\y in {1/18, 2/19, 3/4, 5/6, 8/145, 9/26, 10/27, 12/13, 14/20, 15/16, 22/23, 24/25}{\path[draw=black](\x)--(\y);} 

\foreach \x/\a/\b/\r in {11/90/0/.5, 3/270/180/1,5/270/180/.5, 12/270/180/.5, 145/0/90/.5, 20/0/90/.5, 17/270/180/.5, 21/0/90/1, 19/180/90/.5, 18/180/90/1, 6/270/360/.5, 4/270/360/1}{\draw[draw=black] (\x) arc (\a:\b:\r);}
%\filldraw [black] (1) circle (2pt);
%\filldraw [black] (2) circle (2pt);
%\filldraw [black] (26) circle (2pt);
%\filldraw [black] (27) circle (2pt);

\foreach \i/\x/\y in {1/.25/1.5, 2/.25/2.5}{
\node[circle, fill=white, draw=black, rotate=270] (H\i) at (\x+10,\y) {}; } %add here

\foreach \i/\x/\y in {3/3.75/1.5, 4/3.75/2.5}{
\node[circle, fill=white, draw=black, rotate=90] (H\i) at (\x+10,\y) {}; } %add here

%%S2 Copy end

%%O component 

%%%top
\foreach \n/\x/\y in {1/0/0, 2/.5/0, 3/2.5/0, 4/3/0, 5/5/0, 6/5.5/0, 7/.5/1, 8/1.5/1, 9/3/1, 10/4/1, 11/5.5/1, 12/6.5/1, 13/0/2, 14/.5/2, 15/2.5/2, 16/3/2, 17/5/2, 18/5.5/2, 19/2.75/4.75}{
\node[nvtx] (\n) at (\x+6,\y+6) {};} %add here
\foreach \x in {8,10,12,13,14,15,16,17,18,19}{\filldraw[black] (\x) circle (2pt);}

\draw[draw=black] (13) arc (180:0:2.75);

\foreach \x/\y in {1/13, 2/14, 3/15, 4/16, 5/17, 6/18, 7/8, 9/10, 11/12, 13/18}{
\path[draw=black] (\x) -- (\y);}

\node[circle, fill=white, draw=black, rotate=270] (H1) at (6.25,7) {};
\node[circle, fill=white, draw=black, rotate=270] (H1) at (8.75,7) {};
\node[circle, fill=white, draw=black, rotate=270] (H1) at (11.25,7) {}; %add at these

%%%bottom

\foreach \n/\x/\y in {1/0/0, 2/.5/0, 3/2.5/0, 4/3/0, 5/5/0, 6/5.5/0, 7/.5/1, 8/1.5/1, 9/3/1, 10/4/1, 11/5.5/1, 12/6.5/1, 13/0/2, 14/.5/2, 15/2.5/2, 16/3/2, 17/5/2, 18/5.5/2, 19/2.75/4.75}{
\node[nvtx] (\n) at (\x+2.5,-\y-2) {};} %add here

\foreach \x in {8,10,12,13,14,15,16,17,18,19}{\filldraw[black] (\x) circle (2pt);}

\draw[draw=black] (13) arc (180:360:2.75);

\foreach \x/\y in {1/13, 2/14, 3/15, 4/16, 5/17, 6/18, 7/8, 9/10, 11/12, 13/18}{
\path[draw=black] (\x) -- (\y);}

\node[circle, fill=white, draw=black, rotate=270] (H1) at (2.75,-3) {};
\node[circle, fill=white, draw=black, rotate=270] (H1) at (5.25,-3) {};
\node[circle, fill=white, draw=black, rotate=270] (H1) at (7.75,-3) {}; %add at these

%%O Component End

%wires
\path[draw=black] (9,4) -- (9,6);
\path[draw=black] (8.5,4) -- (8.5,6);

\draw[black] (6,6) arc (0:-90:.5);
\draw[black] (6.5,6) arc (0:-90:1);

\path[draw=black] (4,4) -- (4,4.5);
\path[draw=black] (3.5,4) -- (3.5,4.5);

\draw[black] (4,4.5) arc (180:90:.5);
\draw[black] (3.5,4.5) arc (180:90:1);

\path[draw=black] (4.5, 5) -- (5.5,5);
\path[draw=black] (4.5, 5.5) --(5.5, 5.5);

\filldraw[black] (5,6.25) circle (2pt);
\path[draw=black] (5,6.25) --(5,5.25);
\node[circle, fill=white, draw=black] (H8) at (5,5.25) {};

\path[draw=black] (14,4) -- (14,4.5);
\path[draw=black] (13.5, 4) -- (13.5, 4.5);

\draw[black] (11.5,6) arc (180:270:.5);
\draw[black] (11,6) arc (180:270:1);

\draw[black] (14,4.5) arc (0:90:1);
\draw[black] (13.5,4.5) arc (0:90:.5);

\path[draw=black] (12, 5) -- (13,5);
\path[draw=black] (12, 5.5) --(13, 5.5);

\filldraw[black] (12.5,6.25) circle (2pt);
\path[draw=black] (12.5,6.25) --(12.5,5.25);
\node[circle, fill=white, draw=black] (H9) at (12.5,5.25) {};

\path[draw=black] (5,0) -- (5,-2);
\path[draw=black] (5.5,0) -- (5.5,-2);

\filldraw[black] (6.25,-1) circle (2pt);
\path[draw=black] (6.25,-1) -- (5.25,-1);
\node[circle, fill=white, draw=black, rotate=270] (H10) at (5.25, -1) {};

\draw[black] (2.5,-2) arc (0:90:.5);
\draw[black] (3,-2) arc (0:90:1);
\path[draw=black] (2,-1.5) -- (1,-1.5);
\path[draw=black] (2,-1) -- (1, -1);
\draw[black] (0,-.5) arc (180:270:1);
\draw[black] (.5,-.5) arc (180:270:.5);
\path[draw=black] (0,0) -- (0,-.5);
\path[draw=black] (.5,0) -- (.5,-.5);

\draw[black] (8,-2) arc (180:90:.5);
\draw[black] (7.5,-2) arc (180:90:1);

\path[draw=black] (8.5,-1.5) -- (9.5, -1.5);
\path[draw=black] (8.5, -1) -- (9.5, -1);

\draw[black] (9.5,-1) arc (-90:0:.5);
\draw[black] (9.5,-1.5) arc (-90:0:1);

\path[draw=black] (10, -.5) -- (10,0);
\path[draw=black] (10.5,-.5)-- (10.5,0);
\end{tikzpicture}
    \caption{The reduction applied to $(\neg v_1\vee v_2 \vee \neg v_3)\wedge (v_1\vee \neg v_2 \vee v_3)$}
    \label{reduction}
\end{figure}

%Picture

We will verify that the same construction can be carried out for locally finite instances of Borel 3SAT. First, we fix a coding for Borel sets. The important point is Lemma \ref{coding lemmas}, which lets us easily check that a construction is $\Delta^1_1$ in the codes.

\begin{thm}\label{thm: good parameterization}
There is a good $\omega$-parameterization of $\Pi^1_1$, i.e. a $\Pi^1_1$ set $U\subseteq \omega\times \baire$ so that
\end{thm}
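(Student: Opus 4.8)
The plan is to carry out the standard construction of a good universal $\Pi^1_1$ set from effective descriptive set theory (cf.\ Moschovakis, \S 3H), but organized around a single application of a number--real quantifier to a universal semirecursive set, and adapted to our fixed coding of $\baire$. First I would fix, once and for all, a recursive presentation of $\baire$ together with recursive homeomorphisms $\baire^k\cong\baire$ and $\baire\times\baire\cong\baire$: a recursive enumeration $s\mapsto N_s$ of a basis of clopen sets, recursive functions for intersections, and the semidecidable relation ``$x\in N_s$''. From a standard universal r.e.\ set $W\subseteq\omega^2$ (e.g.\ via Kleene's $T$-predicate) I would define $G^0(e,x):\lra(\exists s)\,(x\in N_s\wedge (e,s)\in W)$ and verify the two facts that make it a \emph{good} $\omega$-parametrization of $\Sigma^0_1$: universality (every lightface $\Sigma^0_1$ subset of $\baire$ is some $G^0_e$) and the s-m-n property (for each $\Sigma^0_1$ relation $P\subseteq\omega^k\times\baire$ there is a recursive $S\colon\omega^k\to\omega$ with $P(\bar n,x)\lra G^0(S(\bar n),x)$, plus a substitution function $S$ with $G^0(S(a,b),x)\lra G^0(a,\langle b,x\rangle)$). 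These are routine once the presentation is pinned down.

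Next I would invoke the normal form $\Pi^1_1=\forall^1\Sigma^0_1$: every $\Pi^1_1$ subset $A\subseteq\baire$ can be written $A(x)\lra(\forall\alpha\in\baire)\,B(x,\alpha)$ with $B$ semirecursive. Then I set $U(e,x):\lra(\forall\alpha\in\baire)\,G^0(e,\langle x,\alpha\rangle)$, which is manifestly $\Pi^1_1$; universality of $U$ for $\Pi^1_1\res\baire$ is immediate from the normal form (given $A$, transport $B$ along the coding homeomorphism, pick $e$ with $B=G^0_e$, and conclude $A=U_e$). The ``good'' clauses for $U$ are then inherited from those of $G^0$ because the operation ``prepend a universal function quantifier'' is applied uniformly: given $P\subseteq\omega^k\times\baire$ in $\Pi^1_1$, put it in normal form $(\forall\alpha)\,Q(\bar n,x,\alpha)$ with $Q$ semirecursive, apply the s-m-n function of $G^0$ to $Q$, and compose with the fixed recursive bijections rearranging the variable blocks to obtain a recursive $S$ with $P(\bar n,x)\lra U(S(\bar n),x)$; the substitution function for $U$ is obtained the same way. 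If the statement also asks for the relativized form, the identical $S$ works with real parameters carried along, which is exactly what licenses the later assertions that our constructions are $\Delta^1_1$ in the codes.

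I expect the main obstacle to be organizational rather than conceptual: one must choose the recursive presentation of $\baire$, the pairing functions, and the universal r.e.\ set so that every composite appearing in the goodness clauses is \emph{visibly} recursive and so that the single quantifier operation transports the s-m-n data cleanly. Nothing here is hard, but it is the sort of bookkeeping where a careless choice of conventions forces a tedious redo, so the real content of the proof is in setting up these conventions carefully at the outset and then checking that the one quantifier step preserves both universality and goodness.
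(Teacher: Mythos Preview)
Your sketch is correct and is precisely the standard construction from effective descriptive set theory (Moschovakis, \S 3H). The paper itself does not give a proof of this theorem at all: it simply states the result, fixes such a $U$ once and for all, and later refers the reader to \cite[Section 3.3]{moschovakis} for the related material on nice codes. So there is nothing to compare---you have supplied the argument the paper omits, and your outline matches the textbook treatment it implicitly cites.
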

\begin{enumerate}
    \item For every $\Pi^1_1$ $P\subseteq \baire$ there is an $e$ so that $P=U_e=\{x\in \baire: (e,x)\in U\}$.
    \item For every $\Pi^1_1$ $P\subseteq \omega^{k}\times \baire$ there is a recursive function $S: \omega^k\rightarrow \omega$ so that, for $n\in\omega^k$,
    \[(n, x)\in P\lra (S(n),x)\in U\]
\end{enumerate}

Fix such a good $\omega$-parameterization $U$.

\begin{dfn}\label{dfn:nice codes}
Fix a $\Delta^1_1$ set $B$. A \textbf{simple codes} for $B$ is a pair $\ip{e,i}\in\omega^2$ so that
\[U_e=\baire\sm U_i=B\] for some good $U$.

A \textbf{nice coding} is a triple $(\s C, \s D^{\Pi}, \s D^{\Sigma})$ where

\begin{enumerate}
    \item $\s C\subseteq \omega$ is a $\Pi^1_1$ set, referred to as the codes
    \item $\s D^{\Pi}, \s D^{\Sigma}\subseteq \omega \times \baire$, $\s D^{\Pi}$ is $\Pi^1_1$, and $\s D^{\Sigma}$ is $\Sigma^1_1$
    \item For every $e\in \s C$, $\s D_e^{\Pi}=\s D_e^{\Sigma}$
    \item For every $\Delta^1_1$ set $B\subseteq\baire$ there is a code $e\in \s C$ with $B=D^{\pi}_e=D^{\Sigma}_e$
    \item There are recursive functions $f,g$ so that, for every $B$, 
    \[e\mbox{ is a nice code for }\rightarrow f(e)\mbox{ is a simple code for }B\]
    \[\ip{e,i}\mbox{ is a simple code for }B\rightarrow g(e,i)\mbox{ is a nice code for }B\]
\end{enumerate}
\end{dfn}
%do the version in moschovakis that has good codes and stuff for pi and \bp simultanously

One can obtain nice codes from the simple codes by a uniform application of separation. See \cite[Section 3.3]{moschovakis}

We'll fix simple and nice codings for $\Delta^1_1(\baire^k)$ for all $k$, and for $e\in \s C$ write $\s D_e$ for $\s D^{\Pi}_e$.

\begin{lem}\label{coding lemmas} 
Fix a $\Delta^1_1$ linear order $\preceq$ of $\baire$. The following maps are $\Delta^1_1$ in the codes:
\begin{enumerate}
    \item $(A,B)\mapsto A\cup B$
    \item $(A,B)\mapsto A\cap B$
    \item $(A,B)\mapsto A\times B$
    \item $A\mapsto \dom(A)$, where $A$ is a relation with countable sections.
    \item $(A,f)\mapsto f(A)$, where $f$ is a countable-to-one function
    \item $A\mapsto f$ where $A$ is a relation with finite sections, and 
    \[f(x,i)=y\lra y\mbox{ is the $i^{th}$ element of $R_x$ according to }\preceq\] 
\end{enumerate}
That is, there is a function $f:\omega^2\rightarrow \omega$ so that, if $e,i$ are nice codes for $A$ and $B$, then $f(e,i)$ is a nice code for $A\cup B$ (and likewise for the other items).
\end{lem}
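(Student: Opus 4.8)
The plan is to reduce all six items to manipulations of \emph{simple} codes and then read off the recursive index functions from the good $\omega$-parameterization. By the recursive translations between simple and nice codes provided by Definition~\ref{dfn:nice codes}(5), it suffices to produce, for each item, a recursive function $\Phi$ sending simple codes for the inputs to a simple code for the output: precomposing $\Phi$ with the nice-to-simple translation on each input and postcomposing with the simple-to-nice translation yields the desired recursive map on nice codes. Now a simple code for a $\Delta^1_1$ set $C\subseteq\baire^k$ is a pair $\ip{e,j}$ with $U_e=\baire^k\sm U_j=C$, so to build $\Phi$ it is enough to exhibit, from the input codes, a $\Pi^1_1$ formula defining the output set and a second $\Pi^1_1$ formula defining its complement, with the input codes occurring as parameters; the second clause of Theorem~\ref{thm: good parameterization} then converts each such formula into a recursive index function. (Throughout we use the simple and nice codings fixed for $\Delta^1_1(\baire^k)$ for each relevant $k$.)

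Items (1)--(3) are immediate from the closure of $\Pi^1_1$ under finite unions, finite intersections, and continuous (indeed recursive) preimages, together with the effectivity of these operations. If $\ip{e_0,e_1}$ and $\ip{i_0,i_1}$ are simple codes for $A$ and $B$, then $A\cup B=U_{e_0}\cup U_{i_0}$ with complement $U_{e_1}\cap U_{i_1}$; $A\cap B=U_{e_0}\cap U_{i_0}$ with complement $U_{e_1}\cup U_{i_1}$; and $A\times B=\{(x,y):x\in U_{e_0}\}\cap\{(x,y):y\in U_{i_0}\}$ with complement $\{(x,y):x\in U_{e_1}\}\cup\{(x,y):y\in U_{i_1}\}$. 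In every case both the set and its complement are $\Pi^1_1$ with the input codes as parameters, and Theorem~\ref{thm: good parameterization}(2) supplies the recursive index functions.

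Items (4)--(6) are where the hypotheses on sections enter, and all three rest on the uniform effective Luzin--Novikov theorem: from a $\Delta^1_1$-code for a relation $A\subseteq\baire\times\baire$ all of whose sections $A_x$ are countable, one can recursively compute a $\Delta^1_1$-code for $\dom(A)$, and indeed a decomposition $A=\bigsqcup_m\mathrm{graph}(g_m)$ into $\Delta^1_1$ partial functions with $\Delta^1_1$ domains, uniformly in the code. For (4) this is exactly the statement: $\dom(A)$ is $\Sigma^1_1$ by projection and $\Pi^1_1$ by the theorem (the countability of the sections is essential; without it the projection is genuinely $\Sigma^1_1$), while its complement is $\Pi^1_1$ uniformly since $x\notin\dom(A)\iff(\forall y)\,(x,y)\in U_{e_1}$ and a universal real quantifier preserves $\Pi^1_1$; apply Theorem~\ref{thm: good parameterization}(2). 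Item (5) reduces to (4): for a $\Delta^1_1$ countable-to-one $f$ one has $f(A)=\dom(R)$ with $R=\{(y,x):x\in A\wedge f(x)=y\}$, which is $\Delta^1_1$ uniformly in the codes of $A$ and $f$ and has countable sections $R_y=A\cap f\inv(y)$. For (6), with $A$ finite-sectioned, iterate $\Delta^1_1$-uniformization, removing the previous graphs at each stage, to obtain $A=\bigsqcup_m\mathrm{graph}(g_m)$ arranged so that $\dom(g_m)=\{x:|A_x|\ge m+1\}$; then $x\mapsto|A_x|$, the predicate $|\{z:z\preceq y,\ (x,z)\in A\}|=i+1$, the set $\dom(f)=\{(x,i):i<|A_x|\}$, and hence the graph $\{(x,i,y):(x,y)\in A\wedge|\{z:z\preceq y,\ (x,z)\in A\}|=i+1\}$ of the enumerating function are all $\Delta^1_1$ uniformly, and Theorem~\ref{thm: good parameterization}(2) finishes.

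The genuine content is thus concentrated in item (4): what is really needed is a \emph{uniform} effective Luzin--Novikov theorem, guaranteeing that a $\Delta^1_1$-code for a relation with countable sections yields \emph{recursively} a $\Delta^1_1$-code for its projection. Granting that (it is standard; see, e.g., \cite{moschovakis}), items (5) and (6) are routine reductions to it and items (1)--(3) are bookkeeping with Boolean operations and the parameterization. The one place to be slightly careful is (6), where one must check that the iteration producing the $g_m$ can be arranged so that $\dom(g_m)$ is exactly $\{x:|A_x|\ge m+1\}$, which makes counting---and the $\preceq$-enumeration of the finite sections---$\Delta^1_1$ uniformly.
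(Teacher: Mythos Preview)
Your proposal is correct and follows the same overall scaffolding as the paper---reduce to simple codes via Definition~\ref{dfn:nice codes}(5), then read off recursive index functions from Theorem~\ref{thm: good parameterization}(2)---and your treatment of (1)--(3) matches the paper essentially verbatim.

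The genuine difference is in how you handle the countable-section items. For (4) you invoke the uniform effective Luzin--Novikov theorem as a black box to get a $\Pi^1_1$ description of $\dom(A)$; the paper instead writes $x\in\dom(A)\iff(\exists y\in\Delta^1_1(x))\,(x,y)\in A$, which is $\Pi^1_1$ by Kleene's theorem on quantification over $\Delta^1_1(x)$, and appeals to the effective perfect set theorem to see that this agrees with the $\Sigma^1_1$ definition $(\exists y)\,(x,y)\in A$ when sections are countable. Similarly for (6): you iterate $\Delta^1_1$-uniformization to decompose $A$ and then count, whereas the paper writes down directly a $\Sigma^1_1$ formula (there exist $y_1\prec\cdots\prec y_i=y$ in $A_x$) and a $\Pi^1_1$ formula (for all $y_1\prec\cdots\prec y_{i+1}$ in $A_x$, $y_{i+1}\neq y$) for $f(x,i)=y$, noting that the quantifiers may be taken over $\Delta^1_1(x)$. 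Your reduction of (5) to (4) is clean and not spelled out in the paper. The paper's route is a bit more elementary (it avoids the full Luzin--Novikov machinery and needs only the perfect set theorem), while yours packages more into a single citable uniform statement; either is fine.
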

\begin{proof}
We'll prove $(1)$ and $(4)$ and sketch $(6)$, the other items are easy once you've seen the general idea.

Using item $(5)$ of the definition of nice codes, it suffices to show that, for our good $\omega$-parameterization $U$, there are $\Delta^1_1$ functions $P,S$ so that, for any $e,i,j,k\in\omega$, if 
\[A=U_e=\baire \sm U_i\quad B=U_j=\baire\sm U_k\]
then
\[U_{P(e,i,j,k)}=\baire\sm U_{S(e,i,j,k)}=A\cup B.\]

Define \[R_P(e,i,j,k,x):\lra (e,x)\in U\mbox{ or }(j,x)\in U\]
\[R_S(e,i,j,k,x):\lra (i,x)\in \omega\times \baire\sm U\mbox{ or }(k,x)\in \omega\times \baire\sm U.\] Then $R_P$ is $\Pi^1_1$ and $R_S$ is $\Sigma^1_1$. By item $(2)$ of the definition of good $\omega$-parameterizations, there are recursive functions $P,S$ so that, for any $e,i,j,k\in\omega$ and $x\in\baire$
\[R_P(e,i,j,k,x)\lra P(e,i,j,k)\in U\]
\[R_S(e,i,j,k,x)\lra S(e,i,j,k)\not\in U\] as desired.

Similarly, for item $(4)$ we want $\Delta^1_1$ functions $S,P$ so that, if $\ip{e,i}$ is a simple code for a relation $A\subseteq\baire^2$ with countable sections, then $\ip{S(e,i), P(e,i)}$ is a simple code for the domain of $A$. Define

\[R_P(e,i,x)\lra (\exists y\in\Delta^1_1(x))\;(e,x,y) U_e\]
\[R_S(e,i,x)\lra (\exists y)\;(e,x,y)\in U_e\]

Note that $R_P$ is $\Pi^1_1$ and $R_S$ is $\Sigma^1_1$. By the effective perfect set theorem, if $A$ is relation with countable sections, and $\ip{e,i}$ is a simple code for $A$, then for any $x$, $R_P(e,i,x)\lra R_S(e,i,x)\lra x\in \dom(A).$ So, again we can find recursive functions as desired.

For $(6)$, we use the following definition of $f$: $f(x,i)=y$ if and only if
\[(\exists y_1,...,y_i\in R_x)\left[y_1\prec ... \prec y_i=y\right]\] and \[ (\forall y_1,...,y_{i+1}\in R_x)\left[ y_1\prec ...\prec y_{i+1}\Rightarrow y_{i+1}\not=y \right].\] And, either quantifier can be taken to range over $\Delta^1_1(x).$
\end{proof}

%relativize

This lemma can be usually be used a black box without the need to delve into the details of the coding.

\begin{dfn}\label{dfn:edge colorings}
$\mathbf{E}$ is the set of (nice) codes for Borel graphs with Borel edge 3-colorings, where we view a graph as a vertex set $V\subseteq\baire$ and a symmetric subset of edges $E\subseteq V^2$. That is $\mathbf{E}$ is
\[\{\ip{v,e}\in \s C^2: D_e\subseteq D_{v}^2\mbox{ and }D_e\mbox{ is symmetric and Borel edge 3-colorable}\}\]
$\csp_B^{lf}(3\sat)$ is the set of codes for locally finite Borel instances of 3SAT with Borel satisfying assignments.
\end{dfn} %choose better notation, be exacter?

It follows from the comments following \ref{thm:intractable} that $\csp_B^{lf}(3\sat)$ is $\bs12$-complete.

\begin{thm}\label{thm: edge coloring borel complete}
$\csp_B^{lf}(3\sat)\leq_B \mathbf{E}$.
\end{thm}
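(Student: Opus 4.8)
The plan is to adapt the classical reduction (Theorem \ref{thm:edge coloring np complete}) to the Borel setting, using the gadget lemmas (Lemmas \ref{lem: inverter lemma}, \ref{lem: variable setting lemma}, \ref{lem: or gate}) as black boxes, and then verify that the resulting construction is $\Delta^1_1$ in the codes using Lemma \ref{coding lemmas}. Given a locally finite Borel instance $\s X$ of $3\sat$ with domain $V\subseteq \baire$ consisting of variables and clauses, I would build a Borel graph $g(\s X)$ by gluing together, for each variable $v$ appearing in $n_v$ clauses, a copy of the variable-setting gadget $S_{n_v}$, and for each clause a copy of the or-gate gadget $O$, wiring the outgoing coding-edge pairs of each $S_{n_v}$ into the appropriate or-gate, inserting a copy of the inverter $H$ whenever the variable occurs negated. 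Concretely, the vertex set of $g(\s X)$ is a subset of $V \times F$ where $F$ is a fixed finite set indexing the (boundedly many, by local finiteness) internal vertices of the gadgets attached at each variable/clause, plus edge-indexed copies for the wires; since $\s X$ is locally finite each variable appears in finitely many clauses, so $n_v$ is finite and the gadget $S_{n_v}$ has uniformly bounded size as a function of $n_v$, and one can take a disjoint union of the finitely many gadget types $S_1, \dots, S_{n_v}$-patterns realized. The key combinatorial point, already contained in the proof of Theorem \ref{thm:edge coloring np complete}, is that an edge $3$-coloring of $g(\s X)$ restricts to a satisfying assignment of $\s X$ (read off whether the outgoing edge pair in each variable gadget is monochromatic) and conversely any satisfying assignment of $\s X$ yields a partial coloring that extends to a full edge $3$-coloring of $g(\s X)$.

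The new content is bookkeeping: one must check $g$ is Borel and in fact $\Delta^1_1$ in the codes, and that it is a genuine reduction of the coloring problems. For the first, I would write the edge set of $g(\s X)$ explicitly as a finite union of sets built from $X$, $R^{\s X}$ (for the relations $R$ of $3\sat$) and fixed finite gadget templates, via the operations $\cup$, $\cap$, $\times$, domain-of-a-countable-section-relation, image under a countable-to-one map, and the selection function from Lemma \ref{coding lemmas}(6) (this last is what lets one enumerate the clauses a given variable belongs to in order to attach the $i$th wire, since $\s X$ is locally finite and hence each variable lies in finitely many clause-tuples). Each such operation is $\Delta^1_1$ in the codes by Lemma \ref{coding lemmas}, so composing them gives a recursive function on codes computing a nice code for the vertex set and edge set of $g(\s X)$ from a nice code for $\s X$. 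For the reduction property: if $\s X \in \csp_B^{lf}(3\sat)$ with Borel satisfying assignment $s$, then setting edge colors gadget-by-gadget according to $s$ gives a partial Borel coloring, and extending it inside each gadget requires a Borel choice of an extension; since each gadget is finite and there are only finitely many gadget types, one can fix in advance, for each gadget type and each admissible boundary coloring, a single extension, and apply it uniformly (or invoke Luzin--Novikov to select extensions), yielding a Borel edge $3$-coloring, so $\langle v, e\rangle \in \mathbf E$. Conversely a Borel edge $3$-coloring of $g(\s X)$ gives a Borel satisfying assignment of $\s X$ by the read-off described above, which is Borel since it only inspects finitely many coordinates of the coloring per variable.

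I expect the main obstacle to be purely expository rather than mathematical: carefully presenting $g(\s X)$ as an honest composition of the operations catalogued in Lemma \ref{coding lemmas}, in particular handling the wiring — matching up the $i$th outgoing edge pair of $S_{n_v}$ with the $i$th clause containing $v$ — in a way that is manifestly $\Delta^1_1$ in the codes. The selection function of Lemma \ref{coding lemmas}(6) applied to the (finite-section, by local finiteness) relation $\{(v, c) : v \text{ appears in clause } c\}$ provides exactly the enumeration needed, and likewise one needs an analogous finite enumeration of the edges of each gadget; the verification that all of this assembles into a recursive operation on codes is then a routine but slightly tedious application of the closure properties. The combinatorial heart of the argument is entirely inherited from Theorem \ref{thm:edge coloring np complete} and the gadget lemmas, which I treat as given.
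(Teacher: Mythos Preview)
Your approach is essentially the same as the paper's: adapt the classical gadget reduction, assemble $g(\s X)$ from copies of $S_{n_v}$, $O$, and $H$, verify $\Delta^1_1$-in-codes via Lemma~\ref{coding lemmas}, and use Luzin--Novikov to extend a partial coloring to each finite gadget. One slip to fix: local finiteness of $\s X$ only guarantees each $n_v$ is finite, not that the $n_v$ are uniformly bounded, so the vertex set is not literally contained in $V\times F$ for a \emph{fixed finite} $F$; the paper handles this by writing $V_{\s X}=\bigcup_{v}\{v\}\times S_{r(v)}\cup\cdots$ and using that the $S_n$ are uniformly computable (e.g.\ built by chaining copies of $H$), so the union is still $\Delta^1_1$ in the codes.
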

\begin{proof}
The construction in Theorem \ref{thm:edge coloring np complete} works in the Borel setting. We will describe this construction formally so that it is clear that $G_{\s X}$ is generated from $\s X$ using operations from Lemma \ref{coding lemmas}, and then use the Luzin--Novikov theorem to verify that this gives a reduction.

Fix an locally finite Borel instance $\s X$ of 3SAT in CNF with variables $V$ and constraints $C=\{c_1,...,c_m\}$. We may assume no variable shows up twice in any $c_i$. We define the following parameters:
\begin{itemize}
\item for a variable $v$, $r(v)$ is the number of constraints $v$ or $\neg v$ appears in
\item for any variable $v$, $c^v_1,...,c^v_{r(v)}$ lists the constraints $v$ or $\neg v$ appears in
\item for any constraint $c$, $v^c_1,v^c_2,v^c_3$ lists the variables which appear in $c$
\item $N=\{(v,c)\in V\times C: \neg v\mbox{ appears in }c\}$
\end{itemize}

The vertex set of $G_x$ is \[V_{\s X}:=\bigcup_{v\in V} \{v\}\times S_{r(i)}\cup \bigcup_{c\in C} \{c\}\times O \cup \bigcup_{(v,c)\in N} \{(v,c)\}\times H.\] Note that $V_{\s X}$ can built from $\s X$ and the components of $O, H,$ and $S_{r(n)}$ using products, intersections, and unions. Since $O, H,$ and $S_{r(n)}$ all have computable codes, $\s X\mapsto V_{\s X}$ is $\Delta^1_1$ in the codes. 

The edge set of $G_{\s X}$, $E_{\s X}$, includes the following edges:
\begin{enumerate}
    \item Any $\{x\}\times e\in V_{\s X}^2$ with $e$ a non-coding edge
    \item $\{(v, u),(c,w)\}$ where $v$ appears in $c$ and $u,w$ are on corresponding coding edges, i.e. where $c=c^{v}_i$, $v=v^c_j$, $(v,c)\not\in N$, and $u$ is on $e_{i,k}$ in $S_{r(v)}$ and $w$ is on $e_{j,k}$ in $O$ for some $k\in \{0,1\}$.
    
    \item $\{(v,u),(v,c,w)\}$ where $(v,c)\in N$, $c=c_i^v$, and either $u$ is on $e_{i,0}$ in $S_{r(v)}$ and $w$ is on $a$ in $H$ or $u$ is on $e_{i,1}$ and $w$ is on $b$.
    \item $\{(v,c,w),(c,u)\}$ where $v=v_j^c$, $(v,c)\in N$, and either $u$ is on $e_{j,0}$ in $O$ and $w$ is on $c$ in $H$ or $u$ is on $e_{j,1}$ and $w$ is on $d$.
\end{enumerate}

Again, $E_{\s X}$ can built from codes for $\s X$, $O$, $H$, and $S_{r(n)}$ using operations from Lemma \ref{coding lemmas}, so $\s X\mapsto G_{\s X}=(V_{\s X}, E_{\s X})$ is $\Delta^1_1$ in the codes.

As before, any Borel edge 3 coloring of $\s G_x$ induces a Borel satisfying assignment of $\s X$ by setting a variable to True if the coloring agrees on corresponding pairs of edges and False otherwise. 

For the converse, recall the Luzin--Novikov theorem which says that Borel assignment of countable sets to points in a standard Borel space admits a selection function. Given a Borel satisfying assignment of $\s X$, there is a Borel partial coloring $f$ of the corresponding edges in each copy of $S_n$ which extends to a not-necessarily Borel edge 3-coloring. On each copy of $S_n$, $H$, and $O$, there is are finitely many edge colorings consistent with $f$. Using the Lusin--Novikov theorem, we can select a such a coloring on each of these components and get a Borel edge 3-coloring of $ G_{\s X}$. %explain Luzin Novikov
\end{proof}

\end{document}